 \newtheorem{theorem}{\sc\bf Theorem}[section]
 \newtheorem{corollary}[theorem]{\sc\bf Corollary}
 \newtheorem{lemma}[theorem]{\sc\bf Lemma}
 \newtheorem{proposition}[theorem]{\sc\bf Proposition}
 \newtheorem{definition}[theorem]{\sc\bf Definition}
 \newtheorem{remark}[theorem]{\sc\bf Remark}
 \newtheorem{question}[theorem]{\sc\bf Question}
 \newtheorem{example}[theorem]{\sc\bf Example}
  \numberwithin{equation}{section}
\def\@cite#1#2{#1\if@tempswa , #2\fi}
\title{{\bf One-sided Drazin inverses in Banach algebras and perturbations of B-Fredholm spectra}\thanks{This work has been supported by the National Natural Science Foundation of China (Grant No. 11901099), the Natural Science Foundation of Fujian Province, China (Grant No. 2022J01104).}}
\author {Kai \textsc{Yan}~\thanks{E-mail address: yklolxj@163.com}
\\ \small (School of Mathematics and Statistics, Fuzhou University,
Fuzhou
350108,\\ \small P.R. China)\\
}
\begin{document}

\date{}
\maketitle

\large

\linespread{1.06}

\begin{quote}
{\bf Abstract.} The famous Drazin inverse and generalized Drazin inverse were introduced by Drazin in 1958 and Koliha in 1996, respectively. In the present paper, the author introduces the concepts of left and right (generalized) Drazin inverses, which are the one-sided versions of classical (generalized) Drazin inverses, in Banach algebras. Several characterizations of one-sided (generalized) Drazin invertible operators on Banach spaces are given. By utilizing the one-sided Drazin invertible spectra, the characterizations of B-Fredholm spectra for Banach space operators are obtained. These perturbational results can be regarded as generalizations of classical Fredholm theory in Banach spaces.
\\
{\bf Mathematics Subject Classification.} 15A09, 47A53, 47A55.\\
{\bf Key words.} left (generalized) Drazin inverse, right (generalized) Drazin inverse, B-Fredholm operators, Perturbations.
\end{quote}

\tableofcontents

\newpage

\section{Introduction}

The famous Drazin inverse was firstly given by Drazin [\cite{Drazin}] in 1958. Let $\mathcal{S}$ be a semigroup or associative ring. Then $a\in \mathcal{S}$ is said to be Drazin invertible if there exists a $x\in \mathcal{S}$ and an integer $j$ such that
$$ax=xa,~~x^2a=x,~~a^{j+1}x=a^j.$$
In the algebra of all bounded linear operators, the Drazin invertible operator has good properties, such as Kato type decomposition, see [\cite{Muller}]. It has been also used in theory of finite Markov chains, linear systems of differential equations and many other fields of mathematics, see [\cite{Campbell}]. By extending nilpotent to quasi-nilpotent, Koliha [\cite{Koliha1}] introduced the generalized Drazin inverse in Banach algebras. Let $\mathcal{A}$ be a Banach algebra. Then $a\in \mathcal{A}$ is said to be  generalized Drazin invertible if there exists a $x\in \mathcal{A}$ such that
$$ax=xa,~~x^2a=x,~~a-axa\in \mathcal{A}^{qnil}.$$
The generalized Drazin inverse $a\in\mathcal{A}$ has many good spectral properties because the spectral idempotent of $a$ is the spectral projection of $\sigma(a)$.

 To the invertibility in Banach algebras, there are natural one-sided versions of it, that is left and right invertibility. It is a natural to ask whether there are suitable one-sided analogues of Drazin invertibility and generalized Drazin invertibility in Banach algebras? In last two decades, there were mainly two kinds of definitions of one-sided Drazin invertible operator (given by Aiena, Berkani, Ghorbel, Minf et al) in general Banach space context, see [\cite{Aiena3}, \cite{Berkani3}, \cite{Ferreyra}, \cite{Minf}]. However, these concepts were introduced only in terms of bounded linear operators. The natural questions are: \emph{why are these operators called left and right Drazin \textbf{invertible} operators and what are the \textbf{inverses} of such one-sided Drazin invertible operators?} In the present paper, the author reveals the inverses of these operators. This discovery enables us to give the definitions of one-sided (generazlied) Drazin invertibility in Banach algebras and to confirm the suitable definitions of one-sided (generalized) Drazin invertible operators in Banach spaces. Moreover, it helps us to establish the one-sided theory of (generalized) Drazin invertibility. By the Kato type decomposition, the connection between B-Fredholm spectra and one-sided Drazin spectra of Banach space operators are established.

This article is organized as follows.

In [\cite{Drazin1}], M.P. Drazin introduced a new class of one-sided generalized inverse, i.e. one-sided $(b,c)$-inverse. However, it lacks of examples of one-sided $(b,c)$-inverses and so it is difficult to consider the corresponding open questions. In section 2.1, we reveal that our one-sided Drazin inverse is a subclass of the one-sided $(b,c)$-inverse. This relation between one-sided Drazin inverse and one-sided $(b,c)$-inverse explains the M.P. Drazin's statement of [\cite{Drazin1}, p. 66]. Moreover, the author introduces the concept of left (resp. right) group inverse in Banach algebras. Several fundamental results of these generalized inverses are discussed.

In section 2.2, the author introduces the concepts of left (resp. right) generalized Drazin inverses and left (resp. right) quasi-polars in Banach algebras. A non-trivial example of one-sided generalized Drazin inverse is given. This example can also be used to answer the question posed by Cvetkov\'{i}c in [\cite{Cvetkovic}, p. 73]. Furthermore, we exhibit that the difference between generalized Drazin invertibility and one-sided Drazin invertibility is similar to the difference between ``reducing subspace" and ``invariant subspace" in the sense of bounded linear operator.

In section 3.1, many basic notations and definitions of operator theory are introduced. Thanks to [\cite{Muller}, p. 160, Theorem III.16.14 and III.16.15], several basic and useful results of left and right essentially invertible are obtained.

In section 3.2, we systematically investigate the characterizations of one-sided (generalized) Drazin invertible operators in Banach spaces. By utilizing the algebraic characterizations of one-sided Drazin inverses, we provide alternative proofs of [\cite{Minf}, Theorems 3.2 and 3.7], which are more straightforward and simpler. As an example of one-sided Drazin invertible operators, several interesting observations of one-sided Drazin invertible matrices are exhibited.

In [\cite{Aiena3}, \cite{Berkani3}], Aiena, Berkani, Koliha and Sanabria gave the perturbational results of B-Fredholm spectra in Hilbert space. However,  whether their results hold for Banach space operators still remains open. In section 3.3, the author obtains the perturbational results of B-Fredholm spectra for Banach space operators by utilizing our one-sided Drazin invertible spectra. Our results essentially extend the elegant results of Aiena, Berkani, Koliha and Sanabria in [\cite{Aiena3}, \cite{Berkani3}] from the category of Hilbert spaces to the category of Banach spaces.

\section{One-sided (generalized) Drazin inverses in Banach algebras}  \label{Banach algebras}

In this section, $\mathcal{A}$ denotes a Banach algebra. Let $a\in \mathcal{A}$. Then $x$ is called a \textit{left} (resp. \textit{right}) \textit{inverse} of $a$ if $xa=1$ (resp. $ax=1$). If $x$ is both a left and right inverse of $a$, then it is called an \textit{inverse} of $a$. The notation $a_{left}^{-1}$ (resp. $a_{right}^{-1}$) denotes one of the left (resp. right) inverse of $a$. An element of $\mathcal{A}$ for which there exists an inverse (resp. left inverse, right inverse) is called \textit{invertible} (resp. \textit{left} \textit{invertible}, \textit{right} \textit{invertible}). The sets of all invertible elements, all left invertible elements and all right invertible elements in $\mathcal{A}$ are denoted by $\mathcal{A}^{-1}$, $\mathcal{A}_{left}^{-1}$ and $\mathcal{A}_{right}^{-1}$, respectively. Define the spectrum, left spectrum and right spectrum of $a$ as follows
$$\sigma(a)=\{\lambda\in \mathbb{C}: \lambda -a \not\in \mathcal{A}^{-1}\},$$
$$\sigma_l(a)=\{\lambda\in \mathbb{C}: \lambda -a \not\in \mathcal{A}_{left}^{-1}\}$$
and
$$\sigma_r(a)=\{\lambda\in \mathbb{C}: \lambda -a \not\in \mathcal{A}_{right}^{-1}\},$$
respectively. The sets of all nilpotent elements and all quasi-nilpotent elements in $\mathcal{A}$ are defined by
$$\mathcal{A}^{nil}=\{a\in\mathcal{A}: \makebox{~there~exists~an~integer~}n\makebox{~such~that~}a^n=0\}$$
and
$$\mathcal{A}^{qnil}=\{a\in\mathcal{A}: \lim_{n\rightarrow \infty}||a^n||^{\frac{1}{n}}=0\},$$
respectively. The properties of left and right invertibility, which are non-commutative versions of ordinary invertibility, are useful in Banach algebras, see [\cite{Muller}]. So it is meaningful to study corresponding one-sided versions of generalized invertibility such as Drazin invertibility and generalized Drazin invertibility.

\subsection{One-sided Drazin inverses and group inverses in Banach algebras}
\label{}

\begin{definition} \emph{An element $a\in \mathcal{A}$ is called left Drazin invertible with index $j$, if there exists a $x\in \mathcal{A}$ such that
$$axa=xa^2, ~~x^2a=x,~~xa^{j+1}=a^j.$$
Such $x$ is called the left Drazin inverse of $a$. Dually, an element $a\in \mathcal{A}$ is called right Drazin invertible with index $j$, if there exists a $y\in \mathcal{A}$ such that
$$aya=a^2y, ~~ay^2=y, ~~a^{j+1}y=a^j.$$
Such $y$ is called the right Drazin inverse of $a$.
}
\end{definition}

An example of left (resp. right) Drazin invertible element is left (resp. right) invertible element. So the left (resp. right) Drazin inverse is generally not unique. It is easy to see that the definitions of left and right Drazin invertibility can be extended from Banach algebras to semigroups or rings. Another interesting generalized inverses in a semigroup $\mathcal{S}$ are called left and right $(b,c)$-inverses, which were firstly introduced by Drazin in [\cite{Drazin1}]. Recall that $a\in\mathcal{S}$ is \textit{left} $(b,c)$-\textit{invertible} if there exists $z\in \mathcal{S}c$ such that $zab=b$. Dually, $a\in \mathcal{S}$ is called \textit{right} $(b,c)$-\textit{invertible} if there exists $w\in b\mathcal{S}$ such that $caw=c$.
In [\cite{Drazin1}, p. 66], Drazin stated that the left (resp. right) Drazin invertibility in the category of bounded linear operators is different from his left (resp. right) $(b,c)$-invertibility. However the left (resp. right) Drazin invertibility is actually a special case of left (resp. right) $(b,c)$-invertibility. In fact, assume that $a\in \mathcal{S}$ has a left Drazin inverse $x\in \mathcal{S}$ with index $j$. Take $b=c=a^j$, then
$$x=x^2a=x^{j+1}a^j\in \mathcal{S}c$$
and
$$xab=xa^{j+1}=a^j=b.$$
Dually, the right Drazin invertibility with index $j$ is a right $(b,c)$-inverse. In recent articles [\cite{Berkani4}, \cite{Ren}], Berkani, Ren and Jiang gave definitions of left and right Drazin invertibility in different ways in rings (or Banach algebras), respectively. Their definitions are equivalent by [\cite{Berkani4}, Theorems 2.5 and 2.6]. By our Theorem \ref{3} and [\cite{Ren}, Theorem 2.1], we obtain that our left and right Drazin invertibility is equivalent to theirs in rings (not only in Banach algebras). However, our concepts are more straightforward and have less restrictions. We now give a proposition which allows defining one-sided generalized Drazin invertibility possible.

\begin{proposition}\label{1} Let $a\in \mathcal{A}$. Then

\emph{(i)} $a$ is left Drazin invertible with index $j$ if and only if there is a $x\in \mathcal{A}$ such that
$$axa=xa^2, ~~x^2a=x, ~~a-axa\in \mathcal{A}^{nil}.$$
where the order of $a-axa$ is $j$;

\emph{(ii)} $a$ is right Drazin invertible with index $j$ if and only if there is a $y\in \mathcal{A}$ such that
$$aya=a^2y, ~~ay^2=y, ~~a-aya\in \mathcal{A}^{nil}.$$
where the order of $a-aya$ is $j$.
\end{proposition}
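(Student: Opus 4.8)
The plan is to prove the equivalence in (i) by showing that the two sets of defining conditions are interchangeable, treating the first two equations $axa=xa^2$ and $x^2a=x$ as common to both formulations and concentrating on the relationship between the nilpotency condition $xa^{j+1}=a^j$ and the condition $a-axa\in\mathcal{A}^{nil}$ of order $j$. The key observation I would establish first is that, under the shared relations, the element $e:=xa$ is an idempotent: indeed $(xa)(xa)=x(axa)=x(xa^2)=(x^2a)a=xa$, so $e^2=e$. Consequently $1-e=1-xa$ is also idempotent, and I expect the element $a-axa=a-aea=(1-e)a$ (after checking $aea=axa\cdot a\cdot$-type bookkeeping, more precisely $axa=xa^2$ gives $a\cdot xa = xa\cdot a$ so that $a$ commutes with $e$ in the relevant sense) to play the role of the ``nilpotent part'' of $a$. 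The heart of the argument is to relate powers of $a-axa$ to the expression $a^j-xa^{j+1}$.

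\medskip

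For the forward direction of (i), assume $xa^{j+1}=a^j$. First I would record the commutation $a(xa)=(xa)a$, which follows directly from $axa=xa^2$; this lets me treat $e=xa$ as commuting with $a$ and lets powers be manipulated freely. I would then show by a short induction that $(a-axa)^n=(1-xa)a^n=a^n-xa^{n+1}$ for all $n\ge 1$, using the idempotency of $1-xa$ together with the commutation relation to collapse the telescoping product $(1-e)a(1-e)a\cdots(1-e)a$ down to $(1-e)a^n$. Setting $n=j$ then yields $(a-axa)^j=a^j-xa^{j+1}=a^j-a^j=0$, so $a-axa$ is nilpotent of order at most $j$; minimality (that the order is exactly $j$) comes from the definition of the index and the fact that $a-axa\in\mathcal{A}^{nil}$ of strictly smaller order would force $a^{j-1}=xa^j$, contradicting the index being $j$.

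\medskip

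For the converse direction, assume instead that $a-axa\in\mathcal{A}^{nil}$ with order exactly $j$. Using the same identity $(a-axa)^n=a^n-xa^{n+1}$ (whose derivation only used the two shared relations, not the third), the hypothesis $(a-axa)^j=0$ immediately gives $a^j-xa^{j+1}=0$, i.e. $xa^{j+1}=a^j$, which is precisely the remaining defining condition for left Drazin invertibility with index $j$; the matching of the index value follows because the order of nilpotency of $a-axa$ and the index $j$ of the left Drazin inverse are forced to coincide by this same identity. Part (ii) is entirely dual: one sets $f:=ay$, checks $f^2=f$ from $aya=a^2y$ and $ay^2=y$, derives the right-handed identity $(a-aya)^n=a^n-a^{n+1}y$, and reads off the equivalence of $a^{j+1}y=a^j$ with $(a-aya)^j=0$.

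\medskip

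The main obstacle I anticipate is the bookkeeping in the inductive identity $(a-axa)^n=a^n-xa^{n+1}$: one must be careful that the only commutation available is between $a$ and $e=xa$ (equivalently $a(xa)=(xa)a$), and that $x$ itself need not commute with $a$, so every simplification has to route the $x$ through an $xa=e$ block before moving it past a power of $a$. Verifying that the idempotent $1-xa$ genuinely annihilates the cross terms in expanding the product, rather than merely commuting with them, is where a careless computation could slip; handling this cleanly via the factorization $(1-e)a(1-e)a\cdots$ and repeatedly using $ea=ae$ together with $e(1-e)=0$ is the delicate point, and the exact-order (index) claim requires one extra line beyond the bare nilpotency.
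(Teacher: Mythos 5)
Your proposal is correct and is essentially the paper's own argument: both hinge on deriving, from the two shared relations alone, the identity $(a-axa)^j=a^j-xa^{j+1}$, which makes the nilpotency condition and the condition $xa^{j+1}=a^j$ manifestly equivalent. The only cosmetic difference is that you organize the computation via the commuting idempotent $e=xa$ and the factorization $a-axa=(1-e)a$, whereas the paper expands $(a-axa)^2=a(a-axa)$ directly and iterates.
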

\begin{proof} (i) If $a\in \mathcal{A}$ satisfies $axa=xa^2$ and $x^2a=x$, then
$$
\begin{aligned} (a-axa)^2&=a^2-a^2xa-axa^2+axa^2xa=a^2-a^2xa-axa^2+xa^3xa\\
&=a^2-a^2xa-axa^2+x^2a^4=a^2-a^2xa-xa^3+xa^3\\
&=a^2-a^2xa=a(a-axa).
\end{aligned}
$$
Thus
$$(a-axa)^j=a^{j-1}(a-axa)=a^{j}-a^jxa=a^j-xa^{j+1}=0.$$
This implies the equivalence holds. (ii) is similar to (i).
\end{proof}

It is easy to see that every Drazin inverse of $a\in\mathcal{A}$ is both left and right Drazin inverse. The following Theorem \ref{2} shows the converse of this statement is still true.

\begin{theorem} \label{2} Let $a\in\mathcal{A}$. If $a$ is both left and right Drazin invertible with index $j$, then $a$ is Drazin invertible with index $j$. More precisely, if the left Drazin inverse of $a$ is $x$ and the right Drazin inverse of $a$ is $y$, then $x=y$.
\end{theorem}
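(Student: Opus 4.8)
The plan is to prove the sharper statement $x=y$, and then to check that this common element satisfies the defining relations of a Drazin inverse of index $j$. Throughout I set $e=xa$ and $f=ay$, and the first thing I would extract is the idempotent structure attached to each one-sided inverse. From $axa=xa^2$ and $x^2a=x$ one computes $(xa)^2=x(axa)=x(xa^2)=x^2a^2=xa$ and $ae=axa=xa^2=ea$, so $e=xa$ is an idempotent commuting with $a$; dually $aya=a^2y$ and $ay^2=y$ make $f=ay$ an idempotent with $af=fa$. These two commutation relations are the workhorse of everything that follows.

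Next I would set up telescoping identities on each side. Iterating $x=x^2a$ gives $x^{n}=x^{n+1}a$ and hence $x=x^{k+1}a^{k}$ for every $k\ge0$, and because $e$ commutes with $a$ a short induction yields $x^{k}a^{k}=e$ for all $k\ge1$; the right-hand analogues $y=a^{k}y^{k+1}$ and $a^{k}y^{k}=f$ follow symmetrically. The single point at which the two one-sided inverses must communicate is the bridge identity $xa^{j}=a^{j}y$, which I would read off from the index conditions via $xa^{j}=x(a^{j+1}y)=(xa^{j+1})y=a^{j}y$, using $a^{j+1}y=a^{j}$ and $xa^{j+1}=a^{j}$.

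Granting these, the equality $x=y$ is immediate:
\[
x=x^{j+1}a^{j}=x^{j}(xa^{j})=x^{j}(a^{j}y)=(x^{j}a^{j})y=ey=xay,
\]
and symmetrically $y=a^{j}y^{j+1}=(a^{j}y)y^{j}=(xa^{j})y^{j}=x(a^{j}y^{j})=xf=xay$, whence $x=y$. I expect the only genuinely delicate part to be organisational: one must place the bridge identity and the telescoped powers so that no step silently invokes a commutation $ax=xa$ that is not yet available, since that is precisely the relation the one-sided hypotheses do not hand us for free.

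Finally, with $x=y$ in hand I would confirm that it is a Drazin inverse. Substituting $y=x$ into the right relations gives $axa=a^{2}x$ and $ax^{2}=x$, while the left relations give $axa=xa^{2}$ and $x^{2}a=x$; comparing the two expressions for $axa$ yields $xa^{2}=a^{2}x$, and then $xa=x^{2}a^{2}=a^{2}x^{2}=ax$, so $a$ commutes with $x$. Combined with $x^{2}a=x$ and $a^{j+1}x=a^{j}$, this exhibits $x$ as a Drazin inverse of $a$, and Proposition~\ref{1} identifies the nilpotency order of $a-axa$, which is the Drazin index, with $j$.
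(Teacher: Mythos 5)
Your proposal is correct and follows essentially the same route as the paper: both rest on the telescoped identities $x=x^{j+1}a^{j}$ and $y=a^{j}y^{j+1}$ together with the index conditions $xa^{j+1}=a^{j}$ and $a^{j+1}y=a^{j}$ to convert one expression into the other, the paper doing this in a single chain while you package it via the idempotent $e=xa$ and the bridge identity $xa^{j}=a^{j}y$. Your closing verification that $ax=xa$ (via comparing $axa=xa^{2}$ with $axa=a^{2}x$) is a slightly longer but equally valid version of the paper's one-line $ax=ax^{2}a=xa$.
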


\begin{proof} If $a\in \mathcal{A}$ has left Drazin inverse $x$ and right Drazin inverse $y$, then
$$x=x^2a=x^3a^2=...=x^{j+1}a^j,$$
and
$$y=ay^2=a^2y^3=...=a^jy^{j+1}.$$
Thus
$$x=x^{j+1}a^j=x^{j+1}(a^{j+1}y)=x^{j+1}a^{2j+1}y^{j+1}=(xa^{j+1})y^{j+1}=a^jy^{j+1}=y$$
and so $ax=ax^2a=xa$.
\end{proof}

\begin{lemma} \label{10} Let $\mathcal{A}$ be a unital Banach algebra and $a\in\mathcal{A}$. If $a$ commutes with an idempotent $p\in \mathcal{A}$, then $a$ is left (resp. right) invertible if and only if $pap$ is left (resp. right) invertible in $p\mathcal{A}p$ and $(1-p)a(1-p)$ is left (resp. right) invertible in $(1-p)\mathcal{A}(1-p)$.
\end{lemma}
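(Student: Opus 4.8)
The plan is to pass to the Peirce (block) decomposition of $\mathcal{A}$ determined by $p$. Write $q = 1-p$, which is again an idempotent commuting with $a$, and record first that the commutation $ap = pa$ forces $a$ to be \emph{block diagonal}: from $pq = qp = 0$ the cross terms vanish, $paq = p(qa) = (pq)a = 0$ and $qap = q(pa) = (qp)a = 0$, so that $a = (p+q)a(p+q) = pap + qaq$ with $pap = pa = ap \in p\mathcal{A}p$ and $qaq = qa = aq \in q\mathcal{A}q$. I abbreviate $a_1 = pap$ and $a_2 = qaq$, viewed as elements of the unital corner algebras $p\mathcal{A}p$ (unit $p$) and $q\mathcal{A}q$ (unit $q$). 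I carry out the argument for the left case only; the right case is strictly dual, obtained by replacing $ba=1$ with $ab=1$ and reversing the order of the factors throughout.

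For the \emph{only if} direction, suppose $a$ is left invertible, say $ba=1$, and I would compress $b$ to the two corners. Using $ap=pa$ to turn the interior $pa$ into $ap$ and collapsing $p^2=p$, one computes $(pbp)(pap) = pbpap = pbap = p(ba)p = p$, so $pbp \in p\mathcal{A}p$ is a left inverse of $a_1$ in the corner algebra; the identical computation with $q$ in place of $p$ gives $(qbq)(qaq) = q$, so $qbq$ left-inverts $a_2$. Conceptually this is nothing but the observation that the two diagonal blocks of the identity $ba = 1$ read off as the two corner equations, precisely because $a$ is block diagonal.

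For the \emph{if} direction, suppose $u \in p\mathcal{A}p$ and $v \in q\mathcal{A}q$ satisfy $u a_1 = p$ and $v a_2 = q$. I would assemble $b = u+v$ and verify $ba = 1$ directly. The orthogonality relations $up = pu = u$, $uq = 0$, $vq = qv = v$, $vp = 0$ make all mixed products collapse: $u\,qaq = (uq)aq = 0$ and $v\,pap = (vp)ap = 0$, whence $ba = u\,pap + v\,qaq = u a_1 + v a_2 = p + q = 1$, so $b$ is a left inverse of $a$ in $\mathcal{A}$.

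The computations are routine once the block structure is in place, so the one point that genuinely requires care --- and the step I would treat as the crux --- is the bookkeeping of units: \textbf{left invertible in $p\mathcal{A}p$} means left-invertible relative to the unit $p$, not $1$, and one must check that the compressed element $pbp$ really lies in $p\mathcal{A}p$ and that $(pbp)(pap) = p$ rather than $=1$. The commutation hypothesis $ap = pa$ is exactly what is needed both to kill the off-diagonal corners of $a$ and to convert $pbpap$ into $pbap$ in the compression step; without it the compressed element would in general fail to invert $a_1$ inside the corner, and the statement would break down.
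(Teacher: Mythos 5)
Your proof is correct and follows essentially the same route as the paper: compress the left inverse to the corners $p\mathcal{A}p$ and $(1-p)\mathcal{A}(1-p)$ for the forward direction, and sum the two corner inverses for the converse (the paper's candidate $(yp+z-zp)$ reduces to your $u+v$ once the corner inverses are taken inside the corner algebras). No gaps.
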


\begin{proof} Suppose there is a $x$ such that $xa=1$. Then $(pxp)(pap)=pxap=p$. Thus $pap$ is left invertible in $p\mathcal{A}p$. Conversely, if there exists $y$ and $z$ such that $yap=p$ and $za(1-p)=1-p$, then
$$1=p+(1-p)=(yp+z-zp)a.$$
Hence $a$ is left invertible in $\mathcal{A}$. The proof of right invertibility is dual to that of left invertibility.
\end{proof}

\begin{theorem} \label{3} Let $\mathcal{A}$ be a unital Banach algebra, then the following statements $(i)$ through $(iii)$ are equivalent.

\emph{(i)} $a\in \mathcal{A}$ is left Drazin invertible with index $j$.

\emph{(ii)} There is an idempotent $p\in \mathcal{A}$ such that
$$ap=pa, ~~a+p\in \mathcal{A}^{-1}_{left}, ~~ap\in \mathcal{A}^{nil},$$
where the order of $ap$ is $j$.

\emph{(iii)}  There is an idempotent $p\in\mathcal{A}$ such that $ap=pa$, $(1-p)a(1-p)$ is left invertible in $(1-p)\mathcal{A}(1-p)$ and $ap$ is nilpotent with order $j$ in $p\mathcal{A}p$.

Dually, the following statements $(iv)$ through $(vi)$ are equivalent.

\emph{(iv)} $a\in \mathcal{A}$ is right Drazin invertible with index $j$.

\emph{(v)} There is an idempotent $p\in \mathcal{A}$ such that
$$ap=pa, ~~a+p\in \mathcal{A}^{-1}_{right}, ~~ap\in \mathcal{A}^{nil},$$
where the order of $ap$ is $j$.

\emph{(vi)}  There is an idempotent $p\in\mathcal{A}$ such that $ap=pa$, $(1-p)a(1-p)$ is right invertible in $(1-p)\mathcal{A}(1-p)$ and $ap$ is nilpotent in $p\mathcal{A}p$ with order $j$.

\end{theorem}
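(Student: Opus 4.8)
The plan is to establish the three left-hand statements $(i)$--$(iii)$ as equivalent and then deduce $(iv)$--$(vi)$ by duality. The duality is painless: in the opposite algebra $\mathcal{A}^{op}$ right Drazin invertibility becomes left Drazin invertibility, right invertibility becomes left invertibility, and the corners $p\mathcal{A}p$, $(1-p)\mathcal{A}(1-p)$ are unchanged as sets; hence $(iv)\Leftrightarrow(v)\Leftrightarrow(vi)$ is literally $(i)\Leftrightarrow(ii)\Leftrightarrow(iii)$ read in $\mathcal{A}^{op}$ and needs no separate argument. For the left case I would run the cycle $(i)\Rightarrow(iii)\Rightarrow(i)$ together with the side equivalence $(ii)\Leftrightarrow(iii)$, which is essentially just Lemma \ref{10}.

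For $(i)\Rightarrow(iii)$, let $x$ be a left Drazin inverse of $a$ with index $j$. The decisive point is that $xa$ is already the correct spectral idempotent. Using $axa=xa^2$ and $x^2a=x$ one gets $(xa)^2=x(axa)=x(xa^2)=x^2a^2=xa$, so $xa$ is idempotent, and $a(xa)=axa=xa^2=(xa)a$, so $xa$ commutes with $a$. Set $p:=1-xa$. Then $ap=pa$, and by Proposition \ref{1} the element $ap=a-axa$ is nilpotent of order $j$; since $ap=pap$ it is nilpotent of the same order $j$ inside $p\mathcal{A}p$. As $a$ commutes with $p$ we have $(1-p)a(1-p)=a(1-p)$, and I claim $(1-p)x(1-p)$ is a left inverse of it in $(1-p)\mathcal{A}(1-p)$: since $x(1-p)=x\cdot xa=x^2a=x$, we find $(1-p)x(1-p)\,(1-p)a(1-p)=(1-p)\,xa\,(1-p)=1-p$. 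This is exactly $(iii)$.

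The equivalence $(ii)\Leftrightarrow(iii)$ I would get by applying Lemma \ref{10} to the element $a+p$, which commutes with $p$ whenever $a$ does. Its two compressions are $(1-p)(a+p)(1-p)=(1-p)a(1-p)$ and $p(a+p)p=p+ap$; because $ap$ is nilpotent, $p+ap$ is automatically invertible in $p\mathcal{A}p$ via the finite series $\sum_{k=0}^{j-1}(-ap)^k$. Thus Lemma \ref{10} says $a+p$ is left invertible in $\mathcal{A}$ precisely when $(1-p)a(1-p)$ is left invertible in $(1-p)\mathcal{A}(1-p)$, and once the shared clauses ``$ap=pa$'' and ``$ap$ nilpotent of order $j$'' are recorded, this is the only difference between $(ii)$ and $(iii)$. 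Finally, for $(iii)\Rightarrow(i)$ I would reconstruct the inverse from the corner datum: choose $b\in(1-p)\mathcal{A}(1-p)$ with $b\,(1-p)a(1-p)=1-p$, note $b(1-p)=(1-p)b=b$, and set $x:=b$. The key identity is $ba=b(1-p)a=b\,(1-p)a(1-p)=1-p$, from which $x^2a=b(ba)=b(1-p)=b=x$, while $axa=a(ba)=a(1-p)$ and $xa^2=(ba)a=(1-p)a=a(1-p)$ give $axa=xa^2$, and $a-axa=a-a(1-p)=ap$ is nilpotent of order $j$. Proposition \ref{1} then delivers $(i)$.

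The step I expect to be most delicate is the one-sidedness bookkeeping, concentrated in $(iii)\Rightarrow(i)$. There $b$ is only a left inverse in the corner, so a priori one controls $1-p$ on a single side; the crux is upgrading $b\,(1-p)a(1-p)=1-p$ to the genuine global equality $ba=1-p$ (which uses that $p$ commutes with $a$, hence $(1-p)a(1-p)=a(1-p)$), since it is this equality, and not its weaker corner form, that forces both $x^2a=x$ and $axa=xa^2$. Dually, in $(i)\Rightarrow(iii)$ the insight to be careful about is that the idempotent must be built from the one-sided datum $xa$ rather than $ax$, because it is $xa$ that is idempotent and commutes with $a$. Everything else---idempotency checks, the nilpotency order via Proposition \ref{1}, and the Neumann series in the corner---is routine.
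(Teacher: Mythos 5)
Your proof is correct and follows essentially the same route as the paper: the spectral idempotent is the same $p=1-xa$, the nilpotency of order $j$ is delegated to Proposition \ref{1}, and the bridge between the global condition on $a+p$ and the corner condition on $(1-p)a(1-p)$ is Lemma \ref{10} in both cases. The only difference is cosmetic — you close the cycle through $(iii)$ (exhibiting the corner left inverse $(1-p)x(1-p)$ and reconstructing $x=b$ from the corner datum) where the paper closes it through $(ii)$ (via $(x+p)(a+p)=1+ap$ and $x=(a+p)^{-1}_{left}(1-p)$) — and your opposite-algebra remark is just a tidier phrasing of the paper's ``dual computation.''
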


\begin{proof} (i) $\Rightarrow$ (ii) Suppose that $a\in \mathcal{A}$ has a left Drazin inverse $x$ with index $j$. Put $p=1-xa$. Then
$$p^2=1-xa-xa+xaxa=1-xa-xa+xa=p$$
and
$$ap=a-axa=a-xa^2=(1-xa)a=pa.$$
By Proposition \ref{1}, $(ap)^j=(a-axa)^j=0$. Since $ap\in \mathcal{A}^{nil}$, $1+ap\in \mathcal{A}^{-1}$. Thus
$$
\begin{aligned} (x+p)(a+p)&=xa+pa+xp+p=xa+a-xa^2+x-x^2a+1-xa\\
&=a-axa+x-x+1=1+a(1-xa)=1+ap
\end{aligned}
$$
is invertible. Hence $a+p\in\mathcal{A}^{-1}_{left}$.

(ii) $\Rightarrow$ (i) Assume that there exists an idempotent $p$ satisfying hypothesis. Put $x=(a+p)^{-1}_{left}(1-p)$. By the commutativity of $a$ and $p$, $p=1-xa$ and then $axa=xa^2$. Now,
$$
\begin{aligned} x^2a&=(a+p)^{-1}_{left}(1-p)(a+p)^{-1}_{left}(1-p)a\\
&=(a+p)^{-1}_{left}(1-p)(a+p)^{-1}_{left}(1-p)(a+p)\\
&=(a+p)^{-1}_{left}(1-p)=x.
\end{aligned}
$$
Since $(ap)^j=ap=0$, then
$$xa^{j+1}=(a+p)^{-1}_{left}(1-p)a\cdot a^j=(a+p)^{-1}_{left}(1-p)(a+p)\cdot a^j=(1-p)a^j=a^j.$$

(ii) $\Rightarrow$ (iii) Assume that there exists an idempotent $p$ satisfying hypothesis. Then
$$(1-p)a(1-p)=(1-p)(a+p)(1-p)$$
is left invertible by Lemma \ref{10}, and $ap$ is nilpotent in $p\mathcal{A}p$.

(iii) $\Rightarrow$ (ii) If $ap$ is nilpotent in $p\mathcal{A}p$, then $p(ap+p)p=ap+p$ is invertible in $p\mathcal{A}p$. Since $(1-p)(a+p)(1-p)$ is left invertible in $(1-p)\mathcal{A}(1-p)$, Lemma \ref{10} implies $(a+p)\in \mathcal{A}_{left}^{-1}$.

(iv) $\Rightarrow$ (v) Suppose that $a\in \mathcal{A}$ is right Drazin invertible with index $j$. Put $p=1-ay$, then one can easily verify that such a $p$ satisfies $ap=pa,~a+p\in \mathcal{A}^{-1}_{right}$ and $ap\in \mathcal{A}^{nil}$
by a dual computation in (i) $\Rightarrow$ (ii).

 (v) $\Rightarrow$ (iv) Putting $y=(1-p)(a+p)^{-1}_{right}$ will be fine.

 (v) $\Leftrightarrow$ (vi) The proof is dual to (ii) $\Leftrightarrow$ (iii).
\end{proof}

\begin{definition} \emph{An element $a\in \mathcal{A}$ is called left group invertible, if there exists a $x\in \mathcal{A}$ such that
$$axa=xa^2, ~~x^2a=x, ~~xa^{2}=a.$$
Such $x$ is called the left group inverse of $a$. Dually, an element $a\in \mathcal{A}$ is called right group invertible, if there exists a $y\in \mathcal{A}$ such that
$$aya=a^2y, ~~ay^2=y, ~~a^{2}y=a.$$
Such $y$ is called the right group inverse of $a$.
}
\end{definition}

\begin{proposition} \label{2.7} Let $a\in \mathcal{A}$ and $n, j$ be some non-negative integers. If $a$ is left (resp. right) Drazin invertible with index $j$, then $a^n$ is left (resp. right) Drazin invertible for any non-negative integer $n$. Moreover, $a^n$ is left (resp. right) group invertible when $n=j$.
\end{proposition}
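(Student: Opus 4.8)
The plan is to show directly that if $x$ is a left Drazin inverse of $a$ with index $j$, then $x^n$ is a left Drazin inverse of $a^n$; the right-hand case is entirely dual. Throughout I set $e=xa$ and work only from the three defining relations $axa=xa^2$, $x^2a=x$ and $xa^{j+1}=a^j$. The feature to keep in view is that $a$ and $x$ need not commute—one only has $axa=xa^2$—so each computation has to be arranged so that the powers of $a$ and of $x$ meet in an order that lets them collapse.

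First I would record the auxiliary facts. From $axa=xa^2$ the element $e=xa$ is idempotent and commutes with $a$, hence with every power $a^n$; from $x^2a=x$ one gets $xe=x^2a=x$ and therefore $x^ne=x^n$ for all $n$. The pivotal identity is
\[
x^na^n=xa=e\qquad(n\ge 1),
\]
which I would prove by induction: the step $x^{n+1}a^{n+1}=x(x^na^n)a=x(xa)a=x^2a^2=(x^2a)a=xa$ uses the induction hypothesis and $x^2a=x$. Finally, iterating $xa^{j+1}=a^j$ gives $xa^{m+1}=a^m$ for every $m\ge j$.

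With these in hand the three defining relations for the pair $(a^n,x^n)$ fall out. For the first, $a^nx^na^n=a^n(x^na^n)=a^ne=ea^n=(x^na^n)a^n=x^na^{2n}=x^n(a^n)^2$, where I use that $e$ commutes with $a^n$. For the second, $x^{2n}a^n=x^n(x^na^n)=x^ne=x^n$. For the third, $x^n(a^n)^{k+1}=(x^na^n)a^{nk}=ea^{nk}=xa^{nk+1}=a^{nk}=(a^n)^k$, where the last equality uses $xa^{m+1}=a^m$ with $m=nk$ and is valid as soon as $nk\ge j$. Thus taking $k=\lceil j/n\rceil$ shows $x^n$ is a left Drazin inverse of $a^n$, with index $\lceil j/n\rceil$; the case $n=0$ is trivial since $a^0$ is the identity.

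For $n=j$ the index is $\lceil j/j\rceil=1$, and the third relation with $k=1$ reads $x^j(a^j)^2=a^j$; together with the first two relations this is exactly the definition of a left group inverse, so $a^j$ is left group invertible. The right-hand statements are obtained verbatim after replacing $e=xa$ by $ay$ and the identity $x^na^n=e$ by $a^ny^n=ay$. I expect the only genuine obstacle to be isolating and proving the collapse identity $x^na^n=xa$: once it is available the rest is bookkeeping, but because $a$ and $x$ do not commute this is the single place where the one-sided relations must be used essentially. As an alternative conceptual route one could invoke Theorem \ref{3}: the same idempotent $p$ works for every $a^n$, since $(a^np)^k=(ap)^{nk}$ gives nilpotency of order $\lceil j/n\rceil$ and $(1-p)a^n(1-p)=[(1-p)a(1-p)]^n$ is left invertible because powers of left-invertible elements are left invertible, yielding the same conclusion in the unital setting.
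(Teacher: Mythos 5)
Your proof is correct and takes essentially the same route as the paper's: both verify directly that $x^n$ satisfies the three defining relations for $a^n$, the paper by pushing powers of $a$ through $x$'s via $axa=xa^2$, you by first isolating the idempotent $e=xa$ and the collapse identity $x^na^n=xa$. Your organization is slightly cleaner in that it yields the sharper index $\lceil j/n\rceil$ and makes the group-inverse identity $x^j(a^j)^2=a^j$ explicit, where the paper only asserts that step is ``similar to check.''
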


\begin{proof} Suppose that $a\in\mathcal{A}$ has left Drazin inverse $x$ with index $j$. Then
$$ a^n(x)^na^n=a^n\cdot xa\cdot xa...xa=xa^{n+1}\cdot xa...xa=x^2a^{n+2}...xa=x^na^{2n}$$
and
$$(x^n)^2a^n=x^2...x^2a\cdot a...a=x^2...x\cdot a...a=x^n.$$
Also
$$
\begin{aligned}
x^n(a^n)^{j+1}&=x^n(a^{j+1})^n=x^{n-1}(xa^{j+1})(a^{j+1})^{n-1}=x^{n-1}(a^{j})(a^{j+1})^{n-1}\\
&=x^{n-1}(a^{j+1})^{n-1}(a^{j})=...=(a^j)^n=(a^n)^j.
\end{aligned}
$$
Hence $x^n$ is the left Drazin inverse of $a^n$. The proof of right Drazin invertibility is dual to that of left Drazin invertibility. Moreover, it is similar to check that $x^j$ is a left (resp. right) group inverse of $a^j$, where $j$ is the one-sided Drazin index of $a$.
\end{proof}

\begin{proposition} \label{4} Let $a\in \mathcal{A}$ and $n$ be some non-negative integer.

\emph{(i)} If $a^n$ has a left group inverse $x$, which satisfies $axa=xa^2$, then $a$ is left Drazin invertible with index $n$.

\emph{(ii)} If $a^n$ has a right group inverse $y$, which satisfies $aya=a^2y$, then $a$ is right Drazin invertible with index $n$.
\end{proposition}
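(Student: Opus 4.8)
The plan is to build the left Drazin inverse of $a$ explicitly out of the left group inverse $x$ of $a^n$, verify the three defining relations directly, and then obtain part (ii) by the mirror-image argument. Assuming $n\ge 1$ (the case $n=0$ is degenerate, since $a^0=1$), I would take as candidate
$$z:=xa^{n-1}.$$
Recall that $x$ being a left group inverse of $a^n$ means $a^nxa^n=xa^{2n}$, $x^2a^n=x$ and $xa^{2n}=a^n$, and that we are additionally handed the compatibility relation $axa=xa^2$.

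The first thing I would establish is a shift identity: from $axa=xa^2$, a one-line induction (multiply on the left by $a$) yields $a^kxa=xa^{k+1}$, and therefore $a^kxa^m=xa^{k+m}$ for all $k,m\ge 1$. This identity is the engine that moves powers of $a$ across $x$, and it is precisely where the hypothesis $axa=xa^2$ (a relation involving $a$ itself, not merely $a^n$) is genuinely needed.

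With the shift identity in hand the three relations should fall out. For $aza=za^2$ I would compute $aza=axa^{n}=xa^{n+1}=za^2$. For $za^{n+1}=a^n$ I would use only the group relation, $za^{n+1}=xa^{2n}=a^n$. The relation $z^2a=z$ is the step I expect to be the main obstacle, as it is the only one requiring both tools simultaneously: writing $z^2a=xa^{n-1}xa^{n}$ and applying the shift identity to the inner block $a^{n-1}xa^{n}=xa^{2n-1}$ gives $z^2a=x^2a^{2n-1}=(x^2a^n)a^{n-1}=xa^{n-1}=z$, the final steps invoking $x^2a^n=x$. The case $n=1$, where $z=x$, should be dispatched separately but is immediate. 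These three relations say exactly that $a$ is left Drazin invertible with index $n$; equivalently, by Proposition \ref{1}, one checks $(a-aza)^n=0$.

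For part (ii) I would run the dual computation with $w:=a^{n-1}y$, using the mirror shift identity $a^mya^k=a^{m+k}y$ obtained from $aya=a^2y$ together with the right group relations $a^ny^2=y$ and $a^{2n}y=a^n$; the only delicate verification, $aw^2=w$, then reads $aw^2=a^{2n-1}y^2=a^{n-1}(a^ny^2)=a^{n-1}y=w$, while $awa=a^nya=a^{n+1}y=a^2w$ and $a^{n+1}w=a^{2n}y=a^n$ are routine.
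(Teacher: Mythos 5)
Your proposal is correct and follows essentially the same route as the paper: both take $z=xa^{n-1}$ as the candidate left Drazin inverse and verify $aza=za^2$, $z^2a=z$, $za^{n+1}=a^n$ by moving powers of $a$ across $x$ via the hypothesis $axa=xa^2$. Your explicit statement of the shift identity $a^kxa^m=xa^{k+m}$ and your attention to the edge cases $n=0,1$ only make explicit what the paper's computation uses implicitly.
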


\begin{proof} (i) Put $z=xa^{n-1}$. Then
$$aza=a(xa^{n-1})a=axa^n=xa^{n+1}=(xa^{n-1})a^2=za^2$$
and
$$z^2a=xa^{n-1}xa^{n-1}a=x^2a^{2n-1}=(x^2a^n)a^{n-1}=xa^{n-1}=z.$$
Also
$$za^{n+1}=xa^{n-1}a^{n+1}=xa^{2n}=a^n.$$
Therefore $z=xa^{n-1}$ is the left Drazin inverse of $a$ and the index of $z$ is $n$. The proof of (ii) is dual to that of (i).
\end{proof}

The aim of considering the relation between the one-side Drazin invertible element and its power is to give a one-sided version of [\cite{Roch}, Corollary 5]. This is important because we try to use the spectral axiomatic theory of Mbekhta and Muller ([\cite{Mbekhta1}]) to analysis the spectral properties of our one-sided Drazin invertible element in Banach algebras. However, the proof of [\cite{Roch}, Corollary 5] depends highly on the commutativity of Drazin (or group) inverse. It seems difficult to give its one-sided analogues and so we list it as an open question.

\begin{question} Let $a\in \mathcal{A}$ and $n$ be some non-negative integer. Is it true that $a$ is left (resp. right) Drazin invertible with index $n$ if and only if $a^n$ is left (resp. right) group inverse?
\end{question}

\subsection{One-sided generalized Drazin inverses in Banach algebras}

The Proposition \ref{1} enables us to extend the definitions of left and right Drazin invertibility to that of left and right generalized Drazin invertibility.

\begin{definition} \emph{An element $a\in \mathcal{A}$ is called left generalized Drazin invertible, if there exists a $x\in \mathcal{A}$ such that
$$axa=xa^2, ~~x^2a=x, ~~axa-a\in \mathcal{A}^{qnil}.$$
Such $x$ is called the left generalized Drazin inverse of $a$. Dually, an element $a\in \mathcal{A}$ is called right generalized Drazin invertible, if there exists a $y\in \mathcal{A}$ such that
$$aya=a^2y, ~~ay^2=y, ~~aya-a\in \mathcal{A}^{qnil}.$$
Such $y$ is called the right generalized Drazin inverse of $a$.
}
\end{definition}

In the following, we will give a non-trivial example of left generalized Drazin invertible operator on a Hilbert space.

\begin{example}\label{B1} \emph{Let $T_1\in \mathcal{B}(\ell^2(\mathbb{N}))$ be the unilateral right shift defined by
$$T_1(x_1,x_2,x_3,...)=(0,x_1,x_2,x_3,...)$$
 for all $(x_n)\in \ell^2(\mathbb{N})$. Then $T_1$ is left invertible and one of the left inverse of $T_1$ is unilateral left shift, i.e.
 $$(T_1)_{left}^{-1}(x_1,x_2,x_3,...)=(x_2,x_3,...)$$
  for all $(x_n)\in \ell^2(\mathbb{N}).$ Let $T_2 \in \mathcal{B}(\ell^2(\mathbb{N}))$ be the weighted left shift defined by
$$T_2(x_1,x_2,x_3,...)=(0, x_1, \frac{x_2}{2},\frac{x_3}{3},...)$$
for all $(x_n)\in \ell^2(\mathbb{N}).$
Then $T_2$ is quasi-nilpotent.}

\emph{Define $T:=T_1\oplus T_2$ on $X=\ell^2(\mathbb{N})\oplus \ell^2(\mathbb{N})$. From Theorem \ref{6}, it follows that $T$ is a left generalized Drazin invertible operator. If $T$ is written as a $2\times2$ matrix
$$T=\left[
\begin{array}{ccc}
T_1 & 0 \\
0 & T_{2}\\
\end{array}\right]: \left[
\begin{array}{ccc}
\ell^2(\mathbb{N})  \\
\ell^2(\mathbb{N})  \\
\end{array}\right]\longrightarrow \left[
\begin{array}{ccc}
\ell^2(\mathbb{N})  \\
\ell^2(\mathbb{N})  \\
\end{array}\right],$$
then its left generalized Drazin inverse is
$$S_1=\left[\begin{array}{ccc}
(T_1)_{left}^{-1} & 0 \\
0 & 0\\
\end{array}\right],$$
i.e. $S_1=(T_1)_{left}^{-1} \oplus 0$. By Theorem \ref{4}, the projection (idempotent) is
$$P=\left[\begin{array}{ccc}
I & 0 \\
0 & I\\
\end{array}\right]-\left[\begin{array}{ccc}
(T_1)_{left}^{-1} & 0 \\
0 & 0\\
\end{array}\right]\cdot\left[\begin{array}{ccc}
T_1 & 0 \\
0 & T_2\\
\end{array}\right]=\left[\begin{array}{ccc}
0 & 0 \\
0 & I\\
\end{array}\right],$$
i.e. $P=0\oplus I$, where $I$ is the identity operator on $\ell^2(\mathbb{N})$.
}
\end{example}

 In [\cite{Cvetkovic}, p. 73], Cvetkov\'{i}c posed a question: Does there exist a projection $P\in \mathcal{B}(X)$ which satisfies that
 (i) $P$ commutes with $T$,
 (ii) $TP$ is quasi-nilpotent,
 (iii) $T+P$ is bounded below but not invertible.
 Our Example \ref{B1} can be utilized to give an affirmative answer to this question. In Example \ref{B1}, $P=0\oplus I$ is a projection from $\ell^2(\mathbb{N})\oplus \ell^2(\mathbb{N})$ onto $\ell^2(\mathbb{N})$. Then $TP=0\oplus T_2$
is quasi-nilpotent, $T+P=T_1\oplus (I+T_2)$
 is left invertible and so bounded below. But $T+P$ is not invertible because $T_1$ is unilateral right shift, which is not invertible.

\begin{theorem} \label {2.11}Let $\mathcal{A}$ be a unital Banach algebra. If $a\in\mathcal{A}$ is both left and right generalized Drazin invertible, then $a$ is generalized Drazin invertible. More precisely, if the left generalized Drazin inverse of $a$ is $x$ and the right generalized Drazin inverse of $a$ is $y$, then $x=y$.
\end{theorem}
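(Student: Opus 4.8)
The plan is to reduce the whole statement to showing that the two canonical idempotents produced by $x$ and $y$ coincide. First I would set $p=1-xa$ and $q=1-ay$. Exactly as in the proof of Theorem~\ref{3}, short computations (using $axa=xa^2,\ x^2a=x$ on one side and $aya=a^2y,\ ay^2=y$ on the other) show that $p,q$ are idempotents commuting with $a$, that $ap=pa=a-axa\in\mathcal{A}^{qnil}$ and $aq=qa=a-aya\in\mathcal{A}^{qnil}$, and that $xp=0$, $qy=0$. Moreover $(x+p)(a+p)=1+ap$ and $(a+q)(y+q)=1+aq$; since $ap,aq$ are quasi-nilpotent, $1+ap$ and $1+aq$ are invertible, so $a+p$ is left invertible and $a+q$ is right invertible. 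The purpose of this bookkeeping is the following easy reduction: if I can prove $p=q$, then $xa=1-p=1-q=ay$, whence
\[
x=x^2a=x(xa)=x(ay)=(xa)y=(ay)y=ay^2=y,
\]
and then $ax=ay=xa$, $x^2a=x$, $a-axa\in\mathcal{A}^{qnil}$ show that $x$ is a (two-sided) generalized Drazin inverse. So the entire theorem comes down to the identity $p=q$.

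To prove $p=q$ I would first locate $0$ in the spectrum. Because $p$ commutes with $a$, Lemma~\ref{10} gives, for each $\lambda$, that $\lambda-a$ is left invertible iff $\lambda p-ap$ is left invertible in $p\mathcal{A}p$ and $\lambda(1-p)-(1-p)a(1-p)$ is left invertible in $(1-p)\mathcal{A}(1-p)$. Since $ap$ is quasi-nilpotent, the first holds for every $\lambda\neq0$; since $(1-p)a(1-p)$ is left invertible and the set of left invertible elements is open, the second holds for all small $\lambda$. Hence $0$ is not an accumulation point of $\sigma_l(a)$, and symmetrically (using $q$) not of $\sigma_r(a)$; as $\sigma(a)=\sigma_l(a)\cup\sigma_r(a)$, $0$ is not an accumulation point of $\sigma(a)$. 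Thus the Riesz idempotent $p_0=\frac{1}{2\pi i}\oint_\Gamma(\lambda-a)^{-1}\,d\lambda$ (with $p_0=0$ if $0\notin\sigma(a)$) is well defined; it commutes with $a$ and with everything commuting with $a$, in particular with $p$ and $q$, and $ap_0$ is quasi-nilpotent in $p_0\mathcal{A}p_0$ while $(1-p_0)a(1-p_0)$ is invertible in $(1-p_0)\mathcal{A}(1-p_0)$.

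The heart of the argument is to compare $p$ with $p_0$ on the four mutually orthogonal idempotents $p_0p,\ p_0(1-p),\ (1-p_0)p,\ (1-p_0)(1-p)$. On the corner cut out by $e:=p_0(1-p)$, the element $a$ is quasi-nilpotent (a sub-corner of $p_0\mathcal{A}p_0$) and simultaneously left invertible (a sub-corner of $(1-p)\mathcal{A}(1-p)$, left invertibility passing to commuting sub-corners by Lemma~\ref{10}); but a nonzero quasi-nilpotent in a unital algebra cannot be left invertible, since $\partial\sigma\subseteq\sigma_l$ forces $0\in\sigma_l$. Hence $e=0$, i.e.\ $p_0\leq p$. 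Dually, on $f:=(1-p_0)p$ the element $a$ is quasi-nilpotent and invertible, which is impossible unless $f=0$, giving $p\leq p_0$; therefore $p=p_0$. Running the identical argument for $q$ (replacing ``left invertible'' by ``right invertible'' and using $\partial\sigma\subseteq\sigma_r$) yields $q=p_0$, so $p=q$, and the reduction of the first paragraph finishes the proof.

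I expect the main obstacle to be this final comparison step: unlike the finite-index Theorem~\ref{2}, quasi-nilpotency does not terminate, so I cannot manipulate powers to force $x=y$ directly and must instead route through the spectral projection $p_0$. The technical care lies in justifying that $p_0$ commutes with $p$ and $q$, that spectra and one-sided invertibility restrict correctly to commuting sub-corners, and that a nonzero quasi-nilpotent is neither left nor right invertible (via the inclusion of the topological boundary of the spectrum in the left and right spectra); once these standard facts are in place, the four-corner bookkeeping is routine.
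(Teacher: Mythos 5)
Your proposal is correct, but it takes a genuinely different route from the paper. The paper proves $x=y$ by a direct, purely algebraic norm estimate: writing $x-y=x^{n+1}a^{n}-a^{n}y^{n+1}$ and telescoping through the cross term $xay$, one gets $x-y=x^{n+1}(a-aya)^{n}+(axa-a)^{n}y^{n+1}$, whence $\|x-y\|^{1/n}\le\|x\|^{(n+1)/n}\|(a-aya)^{n}\|^{1/n}+\|(axa-a)^{n}\|^{1/n}\|y\|^{(n+1)/n}\to 0$ and so $x=y$ in about five lines, with no functional calculus. You instead reduce to the identity of the two spectral idempotents $p=1-xa$ and $q=1-ay$ and identify both with the Riesz idempotent $p_{0}$ at $0$ via a four-corner argument; the ingredients you invoke (isolation of $0$ in $\sigma(a)$ from Theorem \ref{4}, commutation of $p_{0}$ with $p$ and $q$, restriction of one-sided invertibility to commuting corners via Lemma \ref{10}, and $\partial\sigma\subseteq\sigma_{l}\cap\sigma_{r}$) are all standard and correctly deployed, so the argument goes through. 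What your route buys is a stronger structural conclusion that the paper does not state: when $a$ is both left and right generalized Drazin invertible, the left and right spectral idempotents are forced to equal the canonical Riesz idempotent at $0$, so in particular the a priori non-unique one-sided spectral idempotents become unique. What it costs is length and machinery (holomorphic functional calculus and several spectral facts) where the paper needs only the quasi-nilpotency of $a-axa$ and $a-aya$ and a telescoping trick; one small wording slip is that you should say ``a quasi-nilpotent element of a \emph{nontrivial} unital corner algebra cannot be left invertible'' rather than ``a nonzero quasi-nilpotent,'' since the contradiction is with $e\neq 0$, not with $eae\neq 0$.
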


\begin{proof} Suppose that $a\in \mathcal{A}$ has left generalized Drazin inverse $x$ and right generalized Drazin inverse $y$. For any non-negative integer $n$,
$$
\begin{aligned} ||x-y||^{\frac{1}{n}}&=||x^{n+1}a^n-a^ny^{n+1}||^{\frac{1}{n}}\\
&=||x^{n+1}a^n-xay+xay-a^ny^{n+1}||^{\frac{1}{n}}\\
&=||x^{n+1}a^n-x^{n+1}a^{n+1}y+xa^{n+1}y^{n+1}-a^ny^{n+1}||^{\frac{1}{n}}\\
&=||x^{n+1}(a^n-a^{n+1}y)+(xa^{n+1}-a^n)y^{n+1}||^{\frac{1}{n}}\\
&=||x^{n+1}(a-aya)^n+(axa-a)^ny^{n+1}||^{\frac{1}{n}}\\
&\leq ||x^{n+1}(a-aya)^n||^{\frac{1}{n}}+||(axa-a)^ny^{n+1}||^{\frac{1}{n}}\\
&\leq ||x||^{\frac{n+1}{n}}||(a-aya)^n||^{\frac{1}{n}}+||(axa-a)^n||^{\frac{1}{n}}||y||^{\frac{n+1}{n}}.
\end{aligned}
$$
Since $a-aya\in \mathcal{A}^{qnil}$ and $axa-a\in \mathcal{A}^{qnil}$, $\lim\limits_{n\rightarrow\infty}||x-y||^{\frac{1}{n}}=0$. Thus $||x-y||$ must be zero and so $x=y$. This implies $ax=ax^2a=xa$.
\end{proof}

An element $a\in \mathcal{A}$ is called \textit{quasi-polar} if there exists an idempotent $p\in \mathcal{A}$ such that
$$ap=pa,~~p\in (\mathcal{A}a)\cap(a\mathcal{A}),~~a(1-p)\in \mathcal{A}^{qnil}.$$
The quasi-polar is a very classical and useful concept which was introduced by Harte in [\cite{Harte}]. In [\cite{Koliha1}], Koliha proved that $a\in \mathcal{A}$ is generalized Drazin invertible if and only if $a$ is a quasi-polar. This inspires us to introduce the one-sided versions of the quasi-polar.

\begin{definition} An element $a\in \mathcal{A}$ is said to be a \textit{left quasi-polar} if there is an idempotent $p\in \mathcal{A}$ such that
$$ap=pa,~~p\in \mathcal{A}a,~~a(1-p)\in \mathcal{A}^{qnil}.$$
Dually,  An element $a\in \mathcal{A}$ is said to be a \textit{right quasi-polar} if there is an idempotent $q\in\mathcal{A}$ such that
$$aq=qa,~~q\in a\mathcal{A},~~a(1-q)\in \mathcal{A}^{qnil}.$$
\end{definition}

It not trivial to see that $a\in \mathcal{A}$ is quasi-polar when $a$ is both left and right quasi-polar. But, from following Theorem \ref{4}, we can obtain this result easily.

\begin{theorem} \label{4} Let $\mathcal{A}$ be a unital Banach algebra and $a\in \mathcal{A}$. Then the following statements \emph{(i)} through \emph{(iv)} are equivalent.

\emph{(i)} $a$ is left generalized Drazin invertible.

\emph{(ii)} There is an idempotent $p\in\mathcal{A}$ such that
$$ap=pa, ~~a+p\in \mathcal{A}^{-1}_{left}, ~~ap\in \mathcal{A}^{qnil}.$$

\emph{(iii)}  There is an idempotent $p\in\mathcal{A}$ such that $ap=pa$, $(1-p)a(1-p)$ is left invertible in $(1-p)\mathcal{A}(1-p)$ and $ap$ is quasi-nilpotent in $p\mathcal{A}p$.

\emph{(iv)} $a$ is a left quasi-polar.

Dually, the following statements \emph{(v)} through \emph{(viii)} are equivalent.

\emph{(v)} $a$ is right generalized Drazin invertible.

\emph{(vi)} There is an idempotent $p\in\mathcal{A}$ such that
$$ap=pa, ~~a+p\in \mathcal{A}^{-1}_{right}, ~~ap\in \mathcal{A}^{qnil}.$$

\emph{(vii)}  There is an idempotent $p\in \mathcal{A}$ such that $ap=pa$, $(1-p)a(1-p)$ is right invertible in $(1-p)\mathcal{A}(1-p)$ and $ap$ is quasi-nilpotent in $p\mathcal{A}p$.

\emph{(viii)} $a$ is a right quasi-polar.

Moreover, if one of \emph{(i)}, \emph{(ii)} and \emph{(iii)} $($or \emph{(iv)}, \emph{(v)} and \emph{(vi)} respectively) holds and $0\in \sigma_l(a)$ $(0\in \sigma_r(a))$, then $0\in iso\sigma_l(a)$ $(0\in iso\sigma_r(a)$ respectively$)$.
\end{theorem}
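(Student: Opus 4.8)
The plan is to follow the template of Theorem \ref{3}, replacing nilpotency by quasi-nilpotency throughout and exploiting the single spectral fact that $1+w$ is invertible whenever $w\in\mathcal{A}^{qnil}$ (since $\sigma(w)=\{0\}$). I would first establish the cycle (i) $\Rightarrow$ (ii) $\Rightarrow$ (i) exactly as in Theorem \ref{3}: given a left generalized Drazin inverse $x$, set $p=1-xa$; the identities $x^2a=x$ and $axa=xa^2$ give $p^2=p$ and $ap=pa$, while $ap=a-axa\in\mathcal{A}^{qnil}$ by hypothesis, and the purely algebraic computation $(x+p)(a+p)=1+ap$ shows $a+p\in\mathcal{A}^{-1}_{left}$ because $1+ap$ is invertible. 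Conversely, putting $x=(a+p)^{-1}_{left}(1-p)$ and using $ap=pa$ recovers $p=1-xa$ and the three defining identities, with $axa-a=-ap\in\mathcal{A}^{qnil}$. The equivalence (ii) $\Leftrightarrow$ (iii) is then immediate from Lemma \ref{10} applied to $a+p$ (which commutes with $p$), together with the observation that $ap=pap\in p\mathcal{A}p$ has the same powers, hence the same spectral radius, whether computed in $\mathcal{A}$ or in $p\mathcal{A}p$, so quasi-nilpotency transfers between the two.

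The new ingredient is the quasi-polar clause (iv). For (i) $\Rightarrow$ (iv) I would take $p'=1-p=xa$; then $p'$ is idempotent, commutes with $a$, lies in $\mathcal{A}a$ by construction, and satisfies $a(1-p')=ap\in\mathcal{A}^{qnil}$, so $a$ is left quasi-polar. The reverse direction (iv) $\Rightarrow$ (iii) is the crux: given a left quasi-polar idempotent $p'$ with $p'=ba\in\mathcal{A}a$ and $a(1-p')\in\mathcal{A}^{qnil}$, set $p=1-p'$, so that $ap=a(1-p')$ is quasi-nilpotent in $p\mathcal{A}p$ and $(1-p)a(1-p)=p'ap'=ap'$. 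The point is that $c:=p'bp'$ is a left inverse of $ap'$ inside the corner $p'\mathcal{A}p'$: using $p'a=ap'$ and $ba=p'$ one computes $c\cdot(ap')=p'b(p'ap')=p'(ba)p'=p'$, which is exactly the unit of $p'\mathcal{A}p'$. Thus $(1-p)a(1-p)$ is left invertible in $(1-p)\mathcal{A}(1-p)$ and (iii) holds. This closes the loop (i) $\Leftrightarrow$ (ii) $\Leftrightarrow$ (iii) $\Leftrightarrow$ (iv); the statements (v)--(viii) follow by the dual arguments, reading $p'=ay$ and building the analogous one-sided inverse on the other side.

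For the final assertion I would use the decomposition furnished by (iii). Since $\lambda-a$ commutes with $p$, Lemma \ref{10} reduces the left invertibility of $\lambda-a$ to that of $\lambda p-ap$ in $p\mathcal{A}p$ and of $\lambda(1-p)-(1-p)a(1-p)$ in $(1-p)\mathcal{A}(1-p)$. Because $ap$ is quasi-nilpotent in $p\mathcal{A}p$ its spectrum there is $\{0\}$, so $\lambda p-ap$ is invertible for every $\lambda\neq 0$. Because $(1-p)a(1-p)$ is left invertible and the set of left invertible elements of a Banach algebra is open, $\lambda(1-p)-(1-p)a(1-p)$ stays left invertible for all sufficiently small $|\lambda|$. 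Hence $\lambda-a$ is left invertible for $0<|\lambda|<\varepsilon$, which together with $0\in\sigma_l(a)$ gives $0\in\operatorname{iso}\sigma_l(a)$; the right-hand statement is dual.

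The step I expect to be most delicate is the quasi-polar equivalence: the definition of left quasi-polar asks only for the one-sided membership $p\in\mathcal{A}a$ (not $p\in(\mathcal{A}a)\cap(a\mathcal{A})$), and converting this asymmetric condition into genuine left invertibility of $(1-p)a(1-p)$ in the corner algebra is precisely where the identity $c=p'bp'$ is essential. The remaining work is bookkeeping: verifying that quasi-nilpotency and left invertibility are read off correctly inside the corners $p\mathcal{A}p$ and $(1-p)\mathcal{A}(1-p)$, and that openness of $\mathcal{A}^{-1}_{left}$ is applied within the subalgebra $(1-p)\mathcal{A}(1-p)$ rather than in $\mathcal{A}$.
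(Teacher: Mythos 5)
Your proposal is correct and follows essentially the same route as the paper: the cycle (i)$\Leftrightarrow$(ii)$\Leftrightarrow$(iii) via $p=1-xa$, the identity $(x+p)(a+p)=1+ap$, and Lemma \ref{10}; the quasi-polar direction via the compression $p'bp'$ of the witness $b$ with $ba=p'$ (the paper phrases this as (iv)$\Rightarrow$(i) with $b=qxq$, you as (iv)$\Rightarrow$(iii), but the key computation is identical); and the isolation of $0$ in $\sigma_l(a)$ from openness of the left invertibles (the paper writes the left inverse of $\lambda-a$ explicitly where you invoke Lemma \ref{10}, a cosmetic difference). No gaps.
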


\begin{proof} (i) $\Leftrightarrow$ (ii) $\Leftrightarrow$ (iii): The proofs are similar to that of Theorem \ref{3}.

(i) $\Rightarrow$ (iv): Suppose that $a$ is left generalized Drazin invertible. Then there is an element $x\in\mathcal{A}$ such that $axa=xa^2,~x^2a=x$ and $axa-a\in \mathcal{A}^{qnil}.$ Put $q=xa$, then $aq=qa$, $q=xa\in\mathcal{A}a$ and
$$\lim\limits_{n\rightarrow\infty}||(a-aq)^n||^\frac{1}{n}=\lim\limits_{n\rightarrow\infty}||(a-axa)^n||^\frac{1}{n}=0.$$

(iv) $\Rightarrow$ (i): Suppose that $a$ is left quasi-polar. Then there exist an idempotent $q$ and an element $x\in \mathcal{A}$ such that $q=xa$, this implies that $q=(qxq)a$. Put $b=qxq$, then $ba=q$ and $bq=qb=b$. Thus one can easily verify that $aba=ba^2$, $b^2a=b$ and $a-aba\in \mathcal{A}^{qnil}$. This implies that $b$ is a left generalized Drazin inverse of $a$.

(v) $\Leftrightarrow$ (vi) $\Leftrightarrow$ (vii): The proofs are similar to that of Theorem \ref{3}.

(v) $\Leftrightarrow$ (viii): The proof is dual to (i) $\Leftrightarrow$ (iv).

Now assume that there is a idempotent $p\in \mathcal{A}$ such that
$ap=pa,~a+p\in \mathcal{A}^{-1}_{left}$ and $ap\in \mathcal{A}^{qnil}$. For any complex number $\lambda\neq 0$,
$$\lambda-a=[\lambda-(a+p)](1-p)+(\lambda-ap)p.$$
By [\cite{Conway}, p. 193, Corollary 2.3], it follows that $\lambda-(a+p)\in \mathcal{A}_{left}^{-1}$ for $0<|\lambda|<||(a+p)_{left}^{-1}||^{-1}$. Since $ap\in \mathcal{A}^{qnil}$, $\lambda\not\in \sigma(ap)$ and thus $\lambda-ap\in \mathcal{A}^{-1}$. Hence
$$(\lambda-a)_{left}^{-1}=[\lambda-(a+p)]_{left}^{-1}(1-p)+(\lambda-ap)^{-1}p,$$
whenever $0<|\lambda|<||(a+p)_{left}^{-1}||^{-1}$. If $0\in \sigma_l(a)$, then $0\in iso\sigma_{l}(a)$.
\end{proof}

In Theorem \ref{4}, the idempotent $p$ is called the \textit{left} (resp. \textit{right}) \textit{spectral idempotent} corresponding to some left (resp. right) generalized Drazin inverse $x$ of $a\in \mathcal{A}$. Since the one-sided Drazin inverse is not unique, the spectral idempotent is also not unique and it depends on the choice of corresponding one-sided Drazin inverse. If $a\in\mathcal{A}$ is left invertible, then all the left spectral idempotent $p$ of $a$ is equal to zero. Hence the left inverse is actually a trivial example of left generalized Drazin inverse. By Theorems \ref{2.11} and \ref{4}, we immediately get the following result.

\begin{corollary} \label {4.2} Let $\mathcal{A}$ be a unital Banach algebra and $a\in\mathcal{A}$. If $a$ is both left and right quasi-polar, then $a$ is quasi-polar.
\end{corollary}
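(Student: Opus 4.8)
The plan is to reduce the statement to the two results the corollary cites, avoiding any fresh construction. The key observation is that ``quasi-polar'' and ``generalized Drazin invertible'' are interchangeable on each side thanks to Theorem~\ref{4}, so the bilateral hypothesis can be pushed through Theorem~\ref{2.11} and then converted back.

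First I would unpack the hypothesis using Theorem~\ref{4}. Since $a$ is left quasi-polar, the equivalence (i)~$\Leftrightarrow$~(iv) shows that $a$ is left generalized Drazin invertible; dually, since $a$ is right quasi-polar, the equivalence (v)~$\Leftrightarrow$~(viii) shows that $a$ is right generalized Drazin invertible. Thus $a$ possesses a left generalized Drazin inverse $x$ and a right generalized Drazin inverse $y$ simultaneously.

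Next I would feed these two inverses into Theorem~\ref{2.11}. Its conclusion is precisely that $a$ is generalized Drazin invertible and that $x=y$, so the two one-sided inverses fuse into a single two-sided generalized Drazin inverse. Finally, invoking Koliha's characterization from [\cite{Koliha1}] (recalled just before Theorem~\ref{4}), namely that $a$ is generalized Drazin invertible if and only if $a$ is quasi-polar, the forward implication delivers the desired conclusion that $a$ is quasi-polar.

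I do not expect a genuine obstacle here: all of the analytic weight already sits inside Theorem~\ref{2.11}, whose proof rests on the quasi-nilpotent norm estimate forcing $\|x-y\|=0$. An alternative, more self-contained route would be to construct a single spectral idempotent directly from the left and right spectral idempotents furnished by parts (iii) and (vii) of Theorem~\ref{4} and verify the quasi-polar axioms $ap=pa$, $p\in(\mathcal{A}a)\cap(a\mathcal{A})$, and $a(1-p)\in\mathcal{A}^{qnil}$ by hand; but this merely reproves Theorem~\ref{2.11}, so the clean chaining above is preferable.
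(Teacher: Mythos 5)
Your proposal is correct and matches the paper's own argument, which derives the corollary immediately from Theorems \ref{2.11} and \ref{4} in exactly the way you describe: convert each one-sided quasi-polarity to one-sided generalized Drazin invertibility, merge via Theorem \ref{2.11}, and convert back to quasi-polarity. No further comment is needed.
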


\begin{theorem} \label{4.1} Let $a\in\mathcal{A}$ and $n$ be any non-negative integer. If $x$ is a left (resp. right) generalized Drazin inverse of $a$, then $x^n$ is a left (resp. right) generalized Drazin inverse of $a^n$.
\end{theorem}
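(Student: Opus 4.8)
The plan is to split the three defining relations of a left generalized Drazin inverse into the two purely algebraic identities and the single quasi-nilpotency condition, treating them separately. The first two relations for $x^n$, namely $a^n x^n a^n = x^n (a^n)^2$ and $(x^n)^2 a^n = x^n$, depend only on the identities $axa = xa^2$ and $x^2 a = x$; they are therefore established verbatim by the computation already carried out in Proposition \ref{2.7}, where the third (Drazin) relation was never invoked for these two identities. Consequently the entire new content of the theorem is the quasi-nilpotency condition $a^n - a^n x^n a^n \in \mathcal{A}^{qnil}$.

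To handle the quasi-nilpotency I would reintroduce the spectral idempotent $p = 1 - xa$ exactly as in the proof of Theorem \ref{3}: the same elementary computation gives $p^2 = p$, $ap = pa$, and $ap = a - axa$, the last of which lies in $\mathcal{A}^{qnil}$ by hypothesis. The decisive observation is that this idempotent is unchanged when passing to powers. Using $x^2 a = x$ and induction on $n$ one checks that $x^n a^n = xa$ for every $n \geq 1$, the inductive step being
$$x^{n+1} a^{n+1} = x(x^n a^n)a = x(xa)a = x^2 a^2 = xa.$$
Hence $1 - x^n a^n = 1 - xa = p$.

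With this in hand the third relation follows cleanly. Indeed,
$$a^n - a^n x^n a^n = a^n(1 - x^n a^n) = a^n p = (ap)^n,$$
where the last equality uses that $a$ and $p$ commute and that $p$ is idempotent. Since $ap = a - axa \in \mathcal{A}^{qnil}$ and $\mathcal{A}^{qnil}$ is stable under taking powers (because $\|(b^n)^m\|^{1/m} = \left(\|b^{nm}\|^{1/(nm)}\right)^n \to 0$ whenever $b \in \mathcal{A}^{qnil}$), we conclude that $a^n - a^n x^n a^n = (ap)^n \in \mathcal{A}^{qnil}$. This establishes the quasi-nilpotency and so shows that $x^n$ is a left generalized Drazin inverse of $a^n$. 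The right-handed statement is entirely dual, replacing $p = 1 - xa$ by $1 - ay$ and using the relations $ay^2 = y$ and $aya = a^2 y$.

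The two algebraic identities are routine inheritances from Proposition \ref{2.7}, so the only real point is the quasi-nilpotency. The naive attempt to show $a^n - x^n a^{2n} \in \mathcal{A}^{qnil}$ by direct algebraic manipulation is awkward; the key simplification, and the step I expect to be the crux, is recognizing that the spectral idempotent $p$ survives the passage to powers (via $x^n a^n = xa$), which collapses the problem to $(ap)^n$ and the elementary fact that quasi-nilpotent elements are closed under powers.
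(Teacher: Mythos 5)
Your proposal is correct and follows essentially the same route as the paper: both arguments inherit the two algebraic identities from the computation in Proposition \ref{2.7} and reduce $a^n-a^nx^na^n$ to the $n$-th power of $a-axa$ (you do this explicitly via the idempotent $p=1-xa$ and the identity $x^na^n=xa$, the paper by a direct norm computation $\|(a^n-a^nxa)^k\|^{1/k}=\|(a-axa)^{nk}\|^{1/k}$), then use that powers of quasi-nilpotents are quasi-nilpotent. Your write-up is in fact somewhat more careful than the paper's, which leaves the steps $a^nx^na^n=a^nxa$ and $(a^n-a^nxa)^k=(a-axa)^{nk}$ unjustified.
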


\begin{proof} Suppose that $a\in\mathcal{A}$ has left generalized Drazin inverse $x$. Then
$$ a^n(x)^na^n=a^n\cdot xa\cdot xa...xa=xa^{n+1}\cdot xa...xa=x^2a^{n+2}...xa=x^na^{2n}$$
and
$$(x^n)^2a^n=x^2...x^2a\cdot a...a=x^2...x\cdot a...a=x^n.$$
Also
$$\lim_{k\rightarrow \infty}||(a^n-a^nx^na^n)^k||^{\frac{1}{k}}=\lim_{k\rightarrow \infty}||(a^n-a^nxa)^k||^{\frac{1}{k}}=\lim_{k\rightarrow \infty}||(a-axa)^{nk}||^\frac{1}{k}=0.$$
Hence $x^n$ is the left generalized Drazin inverse of $a^n$. The proof of right generalized Drazin inverse are similar.
\end{proof}

\begin{theorem} \label{11} Let $\mathcal{A}$ be a unital Banach algebra and $a\in\mathcal{A}$.

\emph{(i)} If $a$ is left generalized Drazin invertible and $p\in\mathcal{A}$ is the left spectral idempotent of $a$, then $wp=pwp$ for any $w\in $comm$(a)$.

\emph{(ii)} If $a$ is right generalized Drazin invertible and $p\in\mathcal{A}$ is the right spectral idempotent of $a$, then $pw=pwp$ for any $w\in $comm$(a)$.

\emph{(iii)} If $a$ is generalized Drazin invertible and $p\in\mathcal{A}$ is the spectral idempotent of $a$, then $wp=pw$ for any $w\in$ comm$(a)$, i.e. $p\in $comm$^2(a)$.
\end{theorem}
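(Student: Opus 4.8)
The plan is to recast each identity as the vanishing of a single ``off-diagonal'' corner of $w$ relative to the idempotent $p$, and then to exploit the asymmetry between the left-invertible corner and the quasinilpotent corner of $a$ by an iteration argument. For (i), I would first collect the three properties of the left spectral idempotent $p$ supplied by Theorem \ref{4}: namely $ap=pa$, $ap\in\mathcal{A}^{qnil}$, and $a_1:=(1-p)a(1-p)$ is left invertible in $(1-p)\mathcal{A}(1-p)$, say $va_1=1-p$ with $v\in(1-p)\mathcal{A}(1-p)$. Setting $c:=(1-p)wp$, the target $wp=pwp$ is equivalent to $c=0$, so the whole problem becomes showing that this one corner vanishes.

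The second step is to extract an intertwining relation for $c$ from $aw=wa$. Compressing this identity on the left by $1-p$ and on the right by $p$, and using $(1-p)ap=(1-p)pa=0$ together with $ap=pap$, one checks that the left-hand side collapses to $a_1c$ and the right-hand side to $c\,a_2$, where $a_2:=ap$; that is,
\[
a_1 c = c\,a_2 .
\]
The only computations here are the elementary reductions $(1-p)a=a_1$ (so $(1-p)aw\,p=a_1 wp=a_1c$) and $a_2=p a_2$ (so $(1-p)wa\,p=(1-p)w a_2=c\,a_2$), which I would not grind out in detail.

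Now I would solve the relation. Multiplying $a_1c=c\,a_2$ on the left by $v$ and using $va_1=1-p$ and $c=(1-p)c$ gives $c=v c\,a_2$; iterating yields $c=v^{\,n}c\,a_2^{\,n}$ for every $n$. Since $a_2=ap\in\mathcal{A}^{qnil}$ we have $\|a_2^{\,n}\|^{1/n}\to0$, so $\|c\|\le\|v\|^{\,n}\|c\|\,\|a_2^{\,n}\|$ forces $c=0$ (were $c\neq0$ we would get $1\le\|v\|\,\|a_2^{\,n}\|^{1/n}\to0$). Hence $(1-p)wp=0$, i.e. $wp=pwp$. I expect this last step to be the conceptual heart and the main obstacle: it is precisely the coupling of \emph{one-sided} invertibility of $a_1$ with quasinilpotency of $a_2$ that annihilates the corner, and this is exactly why only the one-sided identity $wp=pwp$ survives instead of full commutation — reflecting the ``invariant versus reducing subspace'' distinction stressed in the introduction.

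Part (ii) I would treat dually: for the right spectral idempotent $a_1$ is now right invertible, $a_1v=1-p$, while still $ap\in\mathcal{A}^{qnil}$; compressing $aw=wa$ by $p$ on the left and $1-p$ on the right gives $c'a_1=a_2c'$ for $c':=pw(1-p)$, and right-multiplying by $v$ followed by the iteration $c'=a_2^{\,n}c'v^{\,n}$ together with quasinilpotency forces $c'=0$, i.e. $pw=pwp$. Finally, for (iii), since $a$ is generalized Drazin invertible it is simultaneously left and right generalized Drazin invertible with the \emph{same} spectral idempotent $p$ (Theorem \ref{2.11}); applying (i) and (ii) gives $wp=pwp=pw$ for every $w\in\mathrm{comm}(a)$, which is exactly $p\in\mathrm{comm}^{2}(a)$.
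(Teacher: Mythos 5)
Your proof is correct and is essentially the paper's argument in different clothing: the paper also isolates the off-diagonal corner $(1-p)wp = qw(1-q)$ (with $q=xa$) and annihilates it by writing $qw(1-q)=q^nw(1-q)=x^na^nw(1-q)=x^nw(ap)^n$ and letting $n\to\infty$ via quasinilpotency of $ap$, which is exactly the mechanism of your iteration $c=v^nca_2^n$. The only real difference is bookkeeping --- the paper works with the left generalized Drazin inverse $x$ itself and the identity $q=(xa)^n=x^na^n$, whereas you use the corner inverse $v$ of $(1-p)a(1-p)$ and an intertwining relation --- and parts (ii) and (iii) are handled the same way in both.
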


\begin{proof}
(i) If $a\in \mathcal{A}$ has left generalized Drazin inverse $x$, then the left spectral idempotent of $a$ is $p=1-xa$ by Theorem \ref{4}. Put $q=xa$, then $q=1-p$ is an idempotent. Thus
$$
\begin{aligned} qw-qwq&=qw(1-q)=q^nw(1-q)=(xa)^nw(1-q)\\
&=x^na^nw(1-q)=x^nwa^n(1-q).
\end{aligned}
$$
Since $ap$ is quasi-nilpotent,
$$\lim\limits_{n\rightarrow\infty}||x^nwa^n(1-q)||^{\frac{1}{n}}\leq\lim\limits_{n\rightarrow\infty}||x||\cdot||w||^{\frac{1}{n}}\cdot||(ap)^n||^{\frac{1}{n}}=0.$$
Then $\lim\limits_{n\rightarrow\infty}||qw-qwq||^\frac{1}{n}=0$ and so $qw=qwq$. Thus $(1-p)w=(1-p)w(1-p).$
This implies $wp=pwp$.

(ii) The proof is dual to (i).

(iii) If $a$ is generalized Drazin invertible, then it's spectral idempotent is unique. Thus the conclusion is immediately from (i) (ii).
\end{proof}

\begin{remark} \emph{ Let $a\in \mathcal{A}$. It is known that the commutant of an element $a$, i.e. comm$(a)$, is a closed subalgebra of $\mathcal{A}$. If $a$ is generalized Drazin invertible and $w\in$ comm$(a)$, then the spectral idempotent $p$ of $a$ commutes with $w$, i.e. $wp=pw$. Thus $w=pwp+(1-p)w(1-p).$ This enables us to regard $p$ as an element which \textit{reduces} $w$. On the other hand, if $a$ is left generalized Drazin invertible, then there exists a left spectral idempotent $p$ such that $wp=pwp$ by Theorem \ref{11}. This means that $p$ can be regarded as an element which is \textit{invariant} for $w$. By this observation, the difference between generalized Drazin invertibility and one-sided generalized Drazin invertibility can be regarded as the difference between ``reducing subspace" and ``invariant subspace" in the sense of operator theory.
}
\end{remark}

\section{One-sided (generalized) Drazin invertible operators in Banach spaces}
\label{operators}

\subsection{Preliminary}

Throughout this section, $\mathcal{B}(X)$ denotes the set of all the bounded linear operators on Banach space $X$. In the Banach algebra $\mathcal{B}(X)$, we can characterize the one-sided (generalized) Drazin inverses in terms of bounded linear operators. For a bounded linear operator $T\in \mathcal{B}(X)$, $\mathcal{N}(T)$ and $\mathcal{R}(T)$ denote null space and range of $T$ respectively. Recall that the \textit{ascent} of $T$ is defined as the smallest non-negative integer $p:=$ asc$(T)$ such that $\mathcal{N}(T^p)=\mathcal{N}(T^{p+1})$. Analogously,
the \textit{descent} of $T$ is defined as the smallest non-negative integer $q:=$ dsc$(T)$ such that $\mathcal{R}(T^q)=\mathcal{R}(T^{q+1})$. The recent researches of ascent and descent can be found in [\cite{Yan}]. The \textit{quasi-nilpotent part} of $T$ is defined to be the set
$$H_0(T):=\{x\in X: \lim_{n\rightarrow \infty}||T^nx||^{\frac{1}{n}}=0\},$$
while the \textit{analytical core} of $T$ is defined by
$$
\begin{aligned}
K(T):=&\{x\in X: \makebox{~there~is~a~sequence~}\{u_n\}_{n=1}^\infty\subset X \makebox{~and~a~constant~}\delta>0 \\
&\makebox{such~that~}x=u_0, Tu_{n+1}=u_n \makebox{~and~} ||u_n||\leq \delta^n||x||\makebox{~for~all~}n\geq1\}.
 \end{aligned}
 $$
The closed subspaces $M$ and $N$ of $X$ are said to \textit{reduce} $T$ if $X=M\oplus N$ and $M$, $N$ are invariant under $T$. Recall that $T$ is said to be \textit{semi-regular} if $\mathcal{R}(T)$ is closed and $\mathcal{N}(T)\subset \mathcal{R}(T^n)$ for each non-negative integer $n$. An operator $T$ is said to have GKD (\textit{generalized Kato decomposition}) if there exist two closed subspaces $M$ and $N$ reducing $T$ such that $T|_M$ is semi-regular and $T|_N$ is quasi-nilpotent. All these definitions and more details can be found in [\cite{Aiena1}].

\noindent \textbf{$\bullet$ Fredholm Theory}

Now we introduce several fundamental concepts in classical Fredholm theory. Let $T\in \mathcal{B}(X)$. We say that $T$ is \textit{upper semi Fredholm} if $\mathcal{R}(T)$ is closed and dim$\mathcal{N}(T)<\infty$, while $T$ is \textit{lower semi Fredholm} if dim$X/\mathcal{R}(T)<\infty$. If $T$ is both upper and lower semi Fredholm, then $T$ is called \textit{Fredholm} and the \textit{index} of $T$ is
$$\makebox{ind}(T):=\makebox{dim}\mathcal{N}(T)-\makebox{dim}X/{R}(T).$$
 If $T$ is upper semi Fredholm and $\mathcal{R}(T)$ is topologically complemented in $X$, then $T$ is said to be \textit{left Fredholm} (or \textit{left essentially invertible}). Dually, if $T$ is lower semi Fredholm and $\mathcal{N}(T)$ is topologically complemented in $X$, then $T$ is said to be \textit{left Fredholm} (or \textit{right essentially invertible}). These interesting extensions of semi Fredholm operators can be found in [\cite{Muller}, p. 160]. It is not hard to see that if $T$ is both left and right essentially invertible, then $T$ is Fredholm.

\noindent \textbf{$\bullet$ B-Fredholm Theory}

The B-Fredholm theory is a generalization of Fredholm theory. It is highly dependent on the elegant work of Kaashoek and Lay in [\cite{Kaashoek}, \cite{Kaashoek1}]. Set
$$\bigtriangleup(T):=\{n\in\mathbb{N}:~ \makebox{for~all~}m\geq n,~
\mathcal{N}(T)\cap\mathcal{R}(T^n)=\mathcal{N}(T)\cap\mathcal{R}(T^m)\}.$$
Then the degree of \textit{stable iteration} $\bigtriangleup(T)$ is defined as dis$(T):$=inf$\bigtriangleup(T)$ (with dis$(T)=\infty$ if $\bigtriangleup(T)=\emptyset$). An operator $T\in \mathcal{B}(X)$ is said to be \textit{B-Fredholm} (resp. \textit{upper semi B-Fredholm}, \textit{lower semi B-Fredholm}), if there exits an integer $n$ such that $\mathcal{R}(T^n)$ is closed and $T|_{\mathcal{R}(T^n)}$ is Fredholm (resp. \textit{upper semi Fredholm}, \textit{lower semi Fredholm}). The \textit{index} of a B-Fredholm operator $T$ is ind$(T)$:=ind$(T|_{\mathcal{R}(T^n)})$, i.e.
$$\makebox{ind}(T)=\makebox{dim}[\mathcal{N}(T)\cap \mathcal{R}(T^n)]-\makebox{dim}X/[\mathcal{R}(T)+\mathcal{N}(T^n)].$$
One can check [\cite{Berkani}, \cite{Berkani1}] for more details about semi B-Fredholm operators. Recall that $T\in\mathcal{B}(X)$ is said to be \textit{left B-Fredholm} if $d:=$dis$(T)$ is finite, $\mathcal{N}(T)\cap\mathcal{R}(T^d)$ is finite dimension and $\mathcal{R}(T)+\mathcal{N}(T^d)$ is topologically complemented in $X$. Dually, $T\in \mathcal{B}(X)$ is called \textit{right B-Fredholm} if $d:=$dis$(T)$ is finite, $\mathcal{R}(T)+\mathcal{N}(T^d)$ is closed of finite codimension and $\mathcal{N}(T)\cap\mathcal{R}(T^d)$ is topologically complemented in $X$. By [\cite{Kaashoek}, Lemma 3.1], we have
$$\mathcal{N}(T^{n+1})/\mathcal{N}(T^n)\simeq \mathcal{N}(T)\cap \mathcal{R}(T^n)\makebox{~and~}\mathcal{R}(T^{n})/\mathcal{R}(T^{n+1})\simeq X/[\mathcal{R}(T)+\mathcal{N}(T^n)],$$
for any positive integer $n$. These two isomorphisms will be used in the next two sections. We now show some propositions about left and right essentially invertibility. Let $\mathcal{K}(X)$ be the ideal of all compact operators in $\mathcal{B}(X)$.

\begin{proposition} \label{L3.1} Let $T,~S\in \mathcal{B}(X)$.

\emph{(i)} If $T$ and $S$ are left essentially invertible, then $TS$ is left essentially invertible.

\emph{(ii)} If $T$ and $S$ are right essentially invertible, then $TS$ is right essentially invertible.

\end{proposition}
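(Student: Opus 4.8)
The plan is to avoid arguing directly about the range and kernel of the composition $TS$, and instead to lift the whole statement into the Calkin algebra $\mathcal{C}(X):=\mathcal{B}(X)/\mathcal{K}(X)$, where one-sided invertibility is purely algebraic. By the results quoted from [\cite{Muller}, p. 160, Theorems III.16.14 and III.16.15], an operator $T\in\mathcal{B}(X)$ is left essentially invertible if and only if there exists $A\in\mathcal{B}(X)$ with $AT-I\in\mathcal{K}(X)$, equivalently the canonical image $\pi(T)$ is left invertible in $\mathcal{C}(X)$; dually, $T$ is right essentially invertible if and only if $TB-I\in\mathcal{K}(X)$ for some $B\in\mathcal{B}(X)$, i.e. $\pi(T)$ is right invertible in $\mathcal{C}(X)$. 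With this dictionary the proposition becomes the elementary fact that the left-invertible (resp. right-invertible) elements of a unital algebra form a semigroup under multiplication.

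For part (i), I would choose $A,B\in\mathcal{B}(X)$ with $AT-I\in\mathcal{K}(X)$ and $BS-I\in\mathcal{K}(X)$, and propose $C=BA$ as a left essential inverse of $TS$. Writing $C(TS)=B(AT)S=B\bigl(I+(AT-I)\bigr)S=I+(BS-I)+B(AT-I)S$, both correction terms lie in $\mathcal{K}(X)$ since $\mathcal{K}(X)$ is a two-sided ideal, so $C(TS)-I\in\mathcal{K}(X)$ and $TS$ is left essentially invertible. Part (ii) is the exact dual: taking $TB-I\in\mathcal{K}(X)$ and $SD-I\in\mathcal{K}(X)$ and setting $E=DB$, the computation $(TS)E=T(SD)B=I+(TB-I)+T(SD-I)B\equiv I\pmod{\mathcal{K}(X)}$ exhibits $E$ as a right essential inverse of $TS$.

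Conceptually, this is just the observation that $\pi\colon\mathcal{B}(X)\to\mathcal{C}(X)$ is a unital homomorphism, so $\pi(TS)=\pi(T)\pi(S)$ is a product of two left (resp. right) invertible elements of $\mathcal{C}(X)$; such a product is again left (resp. right) invertible, a left inverse of $ab$ being $b_{left}^{-1}a_{left}^{-1}$ (resp. a right inverse being $b_{right}^{-1}a_{right}^{-1}$), and pulling this back through the characterization gives the claim. I expect the algebra here to be entirely routine.

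The step that carries the real weight is the appeal to [\cite{Muller}]: one must be sure that the topological-complementation conditions built into the definitions—namely that $\mathcal{R}(T)$ (for left essential invertibility) and $\mathcal{N}(T)$ (for right essential invertibility) be complemented—are precisely what upgrade upper/lower semi-Fredholmness to one-sided invertibility modulo $\mathcal{K}(X)$. Once that equivalence is in hand, the semigroup property is immediate and there is no need to confront the genuinely delicate points a direct argument would face, such as verifying that $\mathcal{R}(TS)$ is closed and complemented; those are absorbed into the Calkin-algebra formulation.
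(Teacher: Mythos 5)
Your argument is correct and is essentially the paper's own proof: both reduce the statement, via [\cite{Muller}, p.~160, Theorem~14], to one-sided invertibility modulo $\mathcal{K}(X)$ and then verify that the product of the two left (resp.\ right) essential inverses in reverse order works, using that $\mathcal{K}(X)$ is a two-sided ideal. The Calkin-algebra framing is just a restatement of the same computation, so there is nothing further to add.
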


\begin{proof} (i) By [\cite{Muller}, p. 160, Theorem 14], there exist $T_0,~S_1\in \mathcal{B}(X)$ and $K_0,~K_1\in \mathcal{K}(X)$ such that $T_0T=I+K_0$ and $S_1S=I+K_1$. Then
$$(S_1T_0)TS=S_1(I+K_0)S=S_1S+S_1K_0S=I+K_1+S_1K_0S.$$
Since $\mathcal{K}(X)$ is a two sided ideal, $TS$ is left essentially invertible by [\cite{Muller}, p. 154, Theorem 14] again.

(ii) The proof of right essential invertibility is dual to that of (i).
\end{proof}

\begin{proposition} \label{L3.2} Let $T,~S\in \mathcal{B}(X)$.

\emph{(i)} If $ST$ is left essentially invertible, then $T$ is left essentially invertible.

\emph{(ii)} If $TS$ is right essentially invertible, then $T$ is right essentially invertible.

\end{proposition}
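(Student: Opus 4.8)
The plan is to reduce both claims to an elementary divisibility property of one-sided invertibility modulo a two-sided ideal, using exactly the characterization of essential invertibility that drove Proposition \ref{L3.1}. Recall from [\cite{Muller}, p. 160, Theorem 14] that $T\in\mathcal{B}(X)$ is left essentially invertible if and only if there exist $T_0\in\mathcal{B}(X)$ and $K_0\in\mathcal{K}(X)$ with $T_0T=I+K_0$; equivalently, writing $\pi\colon\mathcal{B}(X)\to\mathcal{B}(X)/\mathcal{K}(X)$ for the Calkin projection, $T$ is left essentially invertible precisely when $\pi(T)$ is left invertible in the Calkin algebra $\mathcal{B}(X)/\mathcal{K}(X)$, and dually for the right-handed notion. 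Since $\mathcal{K}(X)$ is a two-sided ideal, $\pi$ is a unital algebra homomorphism, so it suffices to observe the purely algebraic fact that in any unital algebra a left inverse of a product $vw$ yields a left inverse of the right factor $w$.

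For (i), I would suppose $ST$ is left essentially invertible, so that there are $R\in\mathcal{B}(X)$ and $K\in\mathcal{K}(X)$ with $R(ST)=I+K$. Regrouping the product gives $(RS)T=I+K$ with $RS\in\mathcal{B}(X)$, which is exactly the defining relation of [\cite{Muller}, p. 160, Theorem 14] with essential left inverse $RS$; hence $T$ is left essentially invertible. In Calkin-algebra language this is merely the implication that $\pi(S)\pi(T)$ left invertible forces $\pi(T)$ left invertible, because $u(vw)=1$ gives $(uv)w=1$.

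Part (ii) is the exact dual: if $TS$ is right essentially invertible, I would choose $R$ and $K$ with $(TS)R=I+K$, regroup as $T(SR)=I+K$, and conclude via the right-handed version of [\cite{Muller}, p. 160, Theorem 14] that $T$ is right essentially invertible. I do not expect a genuine obstacle here—the content is entirely carried by the equivalence between one-sided essential invertibility and one-sided invertibility in the Calkin algebra. The only point deserving a little care is that this equivalence is an \emph{iff}, so that both the forward passage (from $ST$ being left essentially invertible to the relation $R(ST)=I+K$) and the backward passage (from $(RS)T=I+K$ back to $T$ being left essentially invertible) are legitimately available.
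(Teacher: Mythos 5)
Your proposal is correct and is essentially identical to the paper's own proof: both obtain $R(ST)=(RS)T=I+K$ with $K$ compact from the characterization in [\cite{Muller}, p. 160, Theorem 14] and read off a left essential inverse $RS$ of $T$, with (ii) handled dually. The Calkin-algebra reformulation you add is a harmless restatement of the same fact.
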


\begin{proof} (i) By [\cite{Muller}, p. 160, Theorem 14], there exist $U\in \mathcal{B}(X)$ and $K\in \mathcal{K}(X)$ such that $UST=I+K$. This implies $T$ is left essentially invertible.

(ii) The proof of right essentially invertible is similar to that of (i).
\end{proof}

\begin{corollary}\label{C3.3} Let $T\in \mathcal{B}(X)$. Then $T$ is left (resp. right) essentially invertible if and only if $T^n$ is left (resp. right) essentially invertible for any positive integer $n$.
\end{corollary}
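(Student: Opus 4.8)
The plan is to prove Corollary \ref{C3.3} by reducing both implications to the two preceding propositions, applied to the special case $S=T$. The statement is an iterative/power version of the multiplicative behaviour already established, so the natural strategy is induction on $n$ using Proposition \ref{L3.1} for one direction and Proposition \ref{L3.2} for the other.

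First I would handle the forward implication. Suppose $T$ is left essentially invertible. The base case $n=1$ is trivial. For the inductive step, assume $T^n$ is left essentially invertible; since $T^{n+1}=T^n\cdot T$ is a product of two left essentially invertible operators, Proposition \ref{L3.1}(i) immediately gives that $T^{n+1}$ is left essentially invertible. Thus by induction $T^n$ is left essentially invertible for every positive integer $n$. The right-essential case is identical, invoking Proposition \ref{L3.1}(ii) in place of (i).

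For the converse, I would fix $n$ and suppose $T^n$ is left essentially invertible. Writing $T^n=(T^{n-1})\,T=S\,T$ with $S:=T^{n-1}$, Proposition \ref{L3.2}(i) applies verbatim: if $ST$ is left essentially invertible then $T$ is left essentially invertible. So the single factor $T$ is recovered from any one power, which is in fact stronger than needed. The right-essential converse follows from Proposition \ref{L3.2}(ii) by writing $T^n=T\,(T^{n-1})=T\,S$.

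Since the corollary is an \emph{if and only if}, I would simply combine the two directions, noting that the converse direction already recovers $T$ from $T^n$ for a single $n$, and the forward direction then propagates left (resp. right) essential invertibility to all powers. I do not expect any genuine obstacle here: the entire content is packaged in Propositions \ref{L3.1} and \ref{L3.2}, and the only care required is the bookkeeping of the induction and the correct factorisation $T^n=S\,T$ versus $T^n=T\,S$ so that the one-sided hypotheses match the one-sided conclusions. The mild subtlety worth stating explicitly is that left (resp. right) essential invertibility is defined through the compact-perturbation criterion of [\cite{Muller}, p. 160], which is exactly the property preserved under products and detected in factors by the two propositions, so no additional spectral or decomposition argument is needed.
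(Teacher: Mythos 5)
Your proposal is correct and follows exactly the route the paper intends: the paper's own proof simply states that the corollary ``follows directly from Propositions \ref{L3.1} and \ref{L3.2},'' and your argument fills in precisely those details (induction via Proposition \ref{L3.1} for the forward direction, and the factorisations $T^n=(T^{n-1})T$ and $T^n=T(T^{n-1})$ feeding into Proposition \ref{L3.2} for the converse). No gaps.
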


\begin{proof} It follows directly from Propositions \ref{L3.1} and \ref{L3.2}.
\end{proof}

\begin{proposition}\label{L3.4} If $T\in \mathcal{B}(X)$ is left (resp. right) essentially invertible and $K\in \mathcal{K}(X)$, then $T+K$ is also left (resp. right) essentially invertible.
\end{proposition}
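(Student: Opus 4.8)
The plan is to reduce the statement to the characterization of one-sided essential invertibility modulo compact operators that was already exploited in the proofs of Propositions \ref{L3.1} and \ref{L3.2}. By [\cite{Muller}, p. 160, Theorem 14], an operator $T\in\mathcal{B}(X)$ is left essentially invertible if and only if there exist $T_0\in\mathcal{B}(X)$ and $K_0\in\mathcal{K}(X)$ such that $T_0T=I+K_0$; dually, $T$ is right essentially invertible if and only if there exist $S_0\in\mathcal{B}(X)$ and $L_0\in\mathcal{K}(X)$ with $TS_0=I+L_0$. In other words, one-sided essential invertibility is exactly one-sided invertibility of the image of $T$ in the Calkin algebra $\mathcal{B}(X)/\mathcal{K}(X)$, and the entire point is that adding a compact operator leaves this image unchanged.

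First I would treat the left case. Starting from a relation $T_0T=I+K_0$ witnessing the left essential invertibility of $T$, I would left-multiply the perturbed operator by the same $T_0$, obtaining
$$T_0(T+K)=T_0T+T_0K=I+K_0+T_0K.$$
Since $\mathcal{K}(X)$ is a two-sided ideal of $\mathcal{B}(X)$, the product $T_0K$ is compact, hence $K_0+T_0K\in\mathcal{K}(X)$. Thus $T_0(T+K)$ equals the identity plus a compact operator, and invoking [\cite{Muller}, p. 160, Theorem 14] once more shows that $T+K$ is left essentially invertible. The right case is entirely dual: from $TS_0=I+L_0$ I would right-multiply by $S_0$ to get $(T+K)S_0=I+L_0+KS_0$ with $KS_0\in\mathcal{K}(X)$, so that $T+K$ is right essentially invertible.

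There is no serious obstacle here, as the two-sided ideal property of $\mathcal{K}(X)$ is precisely what makes the argument go through in one line; the only point worth verifying is that the cited result is phrased for \emph{one-sided} (rather than merely two-sided) essential invertibility, which is exactly the form of [\cite{Muller}, p. 160, Theorem 14] already used in Propositions \ref{L3.1} and \ref{L3.2}.
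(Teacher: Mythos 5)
Your argument is correct and coincides with the paper's own proof: both take the witness $T_0T=I+K_0$ from [\cite{Muller}, p. 160, Theorem 14], compute $T_0(T+K)=I+K_0+T_0K$, and use the two-sided ideal property of $\mathcal{K}(X)$ to conclude, with the right-handed case handled dually. No differences worth noting.
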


\begin{proof} By [\cite{Muller}, p. 160, Theorem 14], there exists $S\in \mathcal{B}(X)$ and $K_0\in\mathcal{K}(X)$ such that $ST=I+K_0$. Then $S(T+K)=ST+SK=I+K_0+SK$. Since $\mathcal{K}(X)$ is a two sided ideal, $T+K$ is also a left essentially invertible by [\cite{Muller}, p. 160, Theorem 14] again. The proof of right essentially invertibility is similar.
\end{proof}

\subsection{Characterization of one-sided (generalized) Drazin invertible operators}

We begin this section with the following lemma which is a ``bounded linear operator" version of Lemma \ref{10}.

\begin{lemma} \label{5} Let $T\in \mathcal{B}(X)$ and $M,~N$ be closed subspaces of $X$ reducing $T$. Then $T$ is left invertible (right invertible, invertible, respectively) if and only if both $T|_M$ and $T|_N$ are left invertible (right invertible, invertible, respectively).
\end{lemma}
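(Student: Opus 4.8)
The plan is to exploit the block-diagonal structure forced by the hypothesis. Since $M$ and $N$ reduce $T$, we have $X = M \oplus N$ with both summands invariant, so $T = T|_M \oplus T|_N$. Writing $P$ for the projection of $X$ onto $M$ along $N$, the closedness of $M$ and $N$ together with the closed graph theorem makes $P \in \mathcal{B}(X)$ a bounded idempotent, and invariance of the two summands gives $PT = TP$. This places us precisely in the setting of Lemma \ref{10} applied to the Banach algebra $\mathcal{A} = \mathcal{B}(X)$ with $p = P$: one route to the result is simply to invoke Lemma \ref{10} after identifying the corner algebra $P\mathcal{B}(X)P$ with $\mathcal{B}(M)$ via $PSP \mapsto (PSP)|_M$ (a unital Banach-algebra isomorphism carrying the identity $P$ to $I_M$ and $PTP = TP$ to $T|_M$), and likewise $(I-P)\mathcal{B}(X)(I-P) \cong \mathcal{B}(N)$ carrying $(I-P)T(I-P)$ to $T|_N$.

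However, I would prefer a self-contained direct argument, which avoids the corner-algebra bookkeeping. Set $A := T|_M$ and $B := T|_N$, and let $\iota_M, \iota_N$ denote the inclusions into $X$. For the ``if'' direction in the left-invertible case, suppose $A'A = I_M$ and $B'B = I_N$; then $S := \iota_M A' P + \iota_N B'(I-P)$ satisfies $ST = I_X$, as a one-line block computation on $x = m+n$ shows, namely $STx = A'(Am) + B'(Bn) = m + n = x$. For the ``only if'' direction, suppose $ST = I_X$ and put $A' := PS\iota_M : M \to M$; then for $m \in M$ one has $A'Am = PS(Am) = PS(Tm) = P(STm) = Pm = m$, so $A'A = I_M$, and symmetrically $(I-P)S\iota_N$ is a left inverse of $B$.

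The right-invertible case is handled dually using $PT = TP$: if $TS = I_X$, then $A' := PS\iota_M$ gives $AA'm = T(PSm) = P(TSm) = Pm = m$, and conversely right inverses of $A$ and $B$ assemble via the same $S$ to give $TS = I_X$. Finally, ``invertible'' is the conjunction of left and right invertibility, so the invertible case follows from the other two. I do not expect any serious obstacle here; the only technical point worth stating explicitly is the boundedness of the projection $P$ via the closed graph theorem, since the remainder is a routine verification, and if one instead takes the Lemma \ref{10} route the one subtlety is checking that one-sided invertibility transfers correctly between $P\mathcal{B}(X)P$ and $\mathcal{B}(M)$.
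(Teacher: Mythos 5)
Your proposal is correct, and it matches the paper's (implicit) approach: the paper states Lemma \ref{5} without proof, merely remarking that it is the ``bounded linear operator'' version of Lemma \ref{10}, which is precisely your first route via the corner-algebra identifications $P\mathcal{B}(X)P\cong\mathcal{B}(M)$ and $(I-P)\mathcal{B}(X)(I-P)\cong\mathcal{B}(N)$. Your self-contained block computation is also correct and in fact supplies the verification the paper omits.
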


\begin{theorem}\label{6} Let $T\in \mathcal{B}(X)$. Then the following statements are equivalent:

\emph{(i)} There exists an operator $S_1\in \mathcal{B}(X)$ such that
$$TS_1T=S_1T^2, ~S_1^2T=S_1, ~TS_1T-T~\makebox{is~quasi-nilpotent};$$

\emph{(ii)} There exists a projection $P\in \mathcal{B}(X)$ such that $TP=PT$, $T+P$ is left invertible and $TP$ is quasi-nilpotent;

\emph{(iii)} $H_0(T)$ is topologically complemented in $X$  with a closed subspace $M$ such that $\mathcal{R}(T|_M)$ is topologically complemented in $M$;

\emph{(iv)} There exists a closed subspace $M$ such that $X=M\oplus H_0(T)$, $T|_M$ is left invertible and $T|_{H_0(T)}$ is quasi-nilpotent, where $M$ and $H_0(T)$ reduce $T$.

\emph{(v)} There exist two closed subspaces $M$ and $N$ reducing $T$ such that $T|_M$ is left invertible and $T|_N$ is quasi-nilpotent.
\end{theorem}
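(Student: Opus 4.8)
The plan is to read statement (i) as the assertion that $T$ is \emph{left generalized Drazin invertible} in the Banach algebra $\mathcal{B}(X)$, so that the equivalence (i) $\Leftrightarrow$ (ii) is simply the specialization of Theorem \ref{4}, (i) $\Leftrightarrow$ (ii), to $\mathcal{A}=\mathcal{B}(X)$, where the idempotents are precisely the bounded projections and quasi-nilpotency coincides with the operator notion. This reduces the theorem to passing between the projection description (ii) and the three geometric descriptions (iii)--(v), all of which encode a reducing splitting of $X$ on which $T$ decomposes into a left invertible part and a quasi-nilpotent part.

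First I would prove the cycle (ii) $\Rightarrow$ (iv) $\Rightarrow$ (v) $\Rightarrow$ (ii). For (ii) $\Rightarrow$ (iv), given the projection $P$ with $TP=PT$, put $N=\mathcal{R}(P)$ and $M=\mathcal{N}(P)$; since $P$ commutes with $T$, the subspaces $M,N$ reduce $T$ and $X=M\oplus N$. As $M,N$ also reduce $T+P$, with $(T+P)|_M=T|_M$ and $(T+P)|_N=T|_N+I_N$, Lemma \ref{5} applied to $T+P$ gives that $T|_M$ is left invertible; and since $TP$ is block-diagonal $0\oplus T|_N$, its quasi-nilpotence forces $\sigma(T|_N)=\{0\}$, so $T|_N$ is quasi-nilpotent. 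The crux is the identification $N=H_0(T)$: the inclusion $N\subseteq H_0(T)$ is immediate from quasi-nilpotence of $T|_N$, while for $H_0(T)\subseteq N$ I would take $x\in H_0(T)$, write $x=m+n$ along $M\oplus N$, and use boundedness of the projection $I-P$ to get $\|T^k m\|^{1/k}\le\|I-P\|^{1/k}\|T^k x\|^{1/k}\to 0$; left invertibility makes $T|_M$ bounded below, so $H_0(T|_M)=\{0\}$ and hence $m=0$. The step (iv) $\Rightarrow$ (v) is trivial (take $N=H_0(T)$), and (v) $\Rightarrow$ (ii) reverses the construction: the projection $P$ onto $N$ along $M$ commutes with $T$, $TP=0\oplus T|_N$ is quasi-nilpotent, and $(T+P)|_M=T|_M$ is left invertible while $(T+P)|_N=I_N+T|_N$ is invertible since $T|_N$ is quasi-nilpotent, so $T+P$ is left invertible by Lemma \ref{5}.

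Finally I would close the loop with (iv) $\Leftrightarrow$ (iii). For (iv) $\Rightarrow$ (iii) one only notes that left invertibility of $T|_M$ forces $\mathcal{R}(T|_M)$ to be complemented in $M$, while $M$ itself complements $H_0(T)$. For the reverse, given $X=M\oplus H_0(T)$ with $M$ reducing $T$ and $\mathcal{R}(T|_M)$ complemented in $M$: the complementation makes $H_0(T)$ closed, so the standard fact that a closed quasi-nilpotent part carries a quasi-nilpotent restriction (see [\cite{Aiena1}]) gives $T|_{H_0(T)}$ quasi-nilpotent; moreover $\mathcal{N}(T|_M)\subseteq M\cap\mathcal{N}(T)\subseteq M\cap H_0(T)=\{0\}$, so $T|_M$ is injective with closed (complemented) range, hence bounded below, and with its range complemented in $M$ it is left invertible, giving (iv). I expect the main obstacle to be the identification $N=H_0(T)$ in (ii) $\Rightarrow$ (iv): the rest is bookkeeping of reducing pairs together with the equivalence ``injective $+$ complemented range $=$ left invertible'', but pinning the abstract spectral idempotent's range to the intrinsically defined subspace $H_0(T)$ is exactly what makes the geometric statements (iii)--(v) canonical rather than merely existential.
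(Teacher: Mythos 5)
Your proposal is correct, and it reaches the same decomposition-theoretic core as the paper, but by a noticeably more self-contained route. The paper's own proof runs the cycle (i) $\Rightarrow$ (ii) $\Rightarrow$ (iv) $\Rightarrow$ (v) $\Rightarrow$ (i) and outsources (iii) $\Leftrightarrow$ (iv) entirely to [\cite{Ferreyra}, Theorem 3.3]; you instead take (i) $\Leftrightarrow$ (ii) wholesale from Theorem \ref{4} (the paper only uses the forward direction there), close the cycle at (ii) via Lemma \ref{5} rather than at (i), and prove (iii) $\Leftrightarrow$ (iv) by hand. Three concrete differences are worth recording. First, for the key identification $\mathcal{R}(P)=H_0(T)$ in (ii) $\Rightarrow$ (iv), the paper invokes the GKD machinery ([\cite{Aiena1}, Corollary 1.69]) to split $H_0(T)=H_0(T|_{\mathcal{N}(P)})\oplus H_0(T|_{\mathcal{R}(P)})$, whereas your estimate $\|T^k m\|^{1/k}\le\|I-P\|^{1/k}\|T^k x\|^{1/k}$ (valid because $T$ commutes with $P$) does the same job elementarily; both then use that a bounded-below operator has trivial quasi-nilpotent part. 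Second, the paper's (v) $\Rightarrow$ (i) explicitly constructs the left generalized Drazin inverse $S_1=(T|_M)^{-1}_{left}\oplus 0$ via an operator-matrix computation — a formula the paper reuses in Example \ref{B1} — while your (v) $\Rightarrow$ (ii) is shorter but produces no explicit inverse. Third, your direct proof of (iii) $\Leftrightarrow$ (iv) is sound (left invertibility of $T|_M$ yields the bounded idempotent $(T|_M)S$ onto $\mathcal{R}(T|_M)$ for one direction; for the other, $\mathcal{N}(T|_M)\subseteq M\cap H_0(T)=\{0\}$ plus closed complemented range gives left invertibility, and $H_0(T)$ closed forces $T|_{H_0(T)}$ quasi-nilpotent by [\cite{Aiena1}, Theorem 1.68]), but you should state explicitly that the hypothesis ``$\mathcal{R}(T|_M)$ is topologically complemented in $M$'' is being read as including $T(M)\subseteq M$, since (iv) requires $M$ to be $T$-invariant and this is the only place that invariance enters.
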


\begin{proof} (iii) $\Leftrightarrow$ (iv): It is directly from [\cite{Ferreyra}, Theorems 3.3].

(i) $\Rightarrow$ (ii): It is clear by Theorem \ref{4}.

(ii) $\Rightarrow$ (iv): Suppose that $P\in \mathcal{B}(X)$ is a projection, then $X=\mathcal{R}(P)\oplus\mathcal{N}(P)$. By the commutativity of $T$ and $P$, $\mathcal{N}(P)$ and $\mathcal{R}(P)$ reduce $T$. Since $T+P$ is left invertible, $(T+P)|_{\mathcal{N}(P)}$ is left invertible by Lemma \ref{5}. Thus $T|_{\mathcal{N}(P)}=(T+P)|_{\mathcal{N}(P)}$ is left invertible and so $T|_{\mathcal{N}(P)}$ is semi-regular.

\textbf{Claim.} $H_0(T|_{\mathcal{R}(P)})=\mathcal{R}(P)$.

In fact, let $x$ be any element in $\mathcal{R}(P)$, then there is a $y$ such that $x=Py$. Thus
$$\lim_{n\rightarrow \infty}||(T|_{\mathcal{R}(P)})^nx||^{\frac{1}{n}}=\lim_{n\rightarrow \infty}||(T|_{\mathcal{R}(P)})^nPy||^{\frac{1}{n}}=\lim_{n\rightarrow \infty}||(TP)^ny||^{\frac{1}{n}}=0,$$
by [\cite{Aiena1}, Theorem 1.68]. This implies that $\mathcal{R}(P)\subseteq H_0(T|_{\mathcal{R}(P)})$ and the claim is proved.

 Now the claim implies that $T|_{\mathcal{R}(P)}$ is quasi-nilpotent. Thus $(\mathcal{R}(P),~\mathcal{N}(P))$ is a GKD for $T$ and so
$H_0(T)=H_0(T|_{\mathcal{N}(P)})\oplus H_0(T|_{\mathcal{R}(P)})$
by [\cite{Aiena1}, Corollary 1.69]. Since $T|_\mathcal{N}(P)$ is left invertible, then $T|_\mathcal{N}(P)$ is bounded below so that $H_0(T|_{\mathcal{N}(P)})=\{0\}$. Then the claim insures that
$$H_0(T)=\{0\}\oplus\mathcal{R}(P)=\mathcal{R}(P).$$
Therefore
$X=\mathcal{N}(P)\oplus H_0(T)$,
where $T|_{\mathcal{N}(P)}$ is left invertible and $T|_{H_0(T)}$ is quai-nilpotent.

(iv) $\Rightarrow$ (v): It is obvious.

(v) $\Rightarrow$ (i): Suppose that $T=T|_M\oplus T|_{N}$, $T|_M$ is left invertible and $T|_{N}$ is quasi-nilpotent. Then $T$ can be written as a $2\times2$ operator matix, that is
$$T=\left[
\begin{array}{ccc}
T|_M & 0 \\
0 & T|_{N}\\
\end{array}\right]: \left[
\begin{array}{ccc}
M  \\
N  \\
\end{array}\right]\longrightarrow \left[
\begin{array}{ccc}
M  \\
N  \\
\end{array}\right].$$
Put $S_1=(T|_M)_{left}^{-1}\oplus 0$, that is
$$S_1=\left[\begin{array}{ccc}
(T|_M)_{left}^{-1} & 0 \\
0 & 0\\
\end{array}\right].$$
Hence
$$
\begin{aligned} TS_1T&=\left[
\begin{array}{ccc}
T|_M & 0 \\
0 & T|_{N}\\
\end{array}\right]\left[\begin{array}{ccc}
(T|_M)_{left}^{-1} & 0 \\
0 & 0\\
\end{array}\right]\left[
\begin{array}{ccc}
T|_M & 0 \\
0 & T|_{N}\\
\end{array}\right]\\
&=\left[
\begin{array}{ccc}
T|_M & 0 \\
0 & 0\\
\end{array}\right]=S_1T^2,
\end{aligned}
$$
$$
\begin{aligned} S_1^2T=\left[\begin{array}{ccc}
(T|_M)_{left}^{-1} & 0 \\
0 & 0\\
\end{array}\right]^2\left[
\begin{array}{ccc}
T|_M & 0 \\
0 & T|_{N}\\
\end{array}\right]
=\left[\begin{array}{ccc}
(T|_M)_{left}^{-1} & 0 \\
0 & 0\\
\end{array}\right]=S_1,
\end{aligned}
$$
and
$$
\begin{aligned} TS_1T-T&=\left[
\begin{array}{ccc}
T|_M & 0 \\
0 & T|_{N}\\
\end{array}\right]\left[\begin{array}{ccc}
(T|_M)_{left}^{-1} & 0 \\
0 & 0\\
\end{array}\right]\left[
\begin{array}{ccc}
T|_M & 0 \\
0 & T|_{N}\\
\end{array}\right]-\left[
\begin{array}{ccc}
T|_M & 0 \\
0 & T|_{N}\\
\end{array}\right]\\
&=\left[\begin{array}{ccc}
0 & 0 \\
0 & -T|_{N}\\
\end{array}\right].
\end{aligned}
$$
By hypothesis, $TS_1T-T$ is quasi-nilpotent.
\end{proof}

\begin{theorem}\label{7} Let $T\in \mathcal{B}(X)$. Then the following statements are equivalent:

\emph{(i)} There exits an operator $S_1\in \mathcal{B}(X)$ such that
$$TS_2T=T^2S_2, ~TS_2^2=S_2, ~TS_2T-T~\makebox{is~quasi-nilpotent};$$

\emph{(ii)} There exists a projection $P\in \mathcal{B}(X)$ such that $TP=PT$, $T+P$ is right invertible and $TP$ is quasi-nilpotent;

\emph{(iii)} $K(T)$ is topologically complemented in $X$  with a closed subspace $N$ such that $\mathcal{R}(T|_N)\subseteq N\subseteq H_0(T)$ and $\mathcal{N}(T)\cap K(T)$ is topologically complemented in $K(T)$;

\emph{(iv)} There exists a closed subspace $N$ such that $X=K(T)\oplus N$, $T|_{K(T)}$ is right invertible and $T|_{N}$ is quasi-nilpotent, where $K(T)$ and $N$ reduce $T$.

\emph{(v)} There exist two closed subspaces $M$ and $N$ reducing $T$ such that $T|_M$ is right invertible and $T|_N$ is quasi-nilpotent.
\end{theorem}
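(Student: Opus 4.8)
The plan is to establish Theorem \ref{7} as the exact dual of Theorem \ref{6}, systematically replacing left invertibility by right invertibility and the quasi-nilpotent part $H_0(T)$ by the analytical core $K(T)$. The equivalence (iii) $\Leftrightarrow$ (iv) I would read off from [\cite{Ferreyra}] as the dual of the statement invoked for Theorem \ref{6}; the implication (i) $\Rightarrow$ (ii) is immediate from the equivalence (v) $\Leftrightarrow$ (vi) of Theorem \ref{4} applied in $\mathcal{B}(X)$; and (iv) $\Rightarrow$ (v) is trivial. Hence the two substantive steps are (v) $\Rightarrow$ (i) and, above all, (ii) $\Rightarrow$ (iv).

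For (v) $\Rightarrow$ (i) I would mimic the block computation from Theorem \ref{6}. Writing $T = T|_M \oplus T|_N$ with $T|_M$ right invertible and $T|_N$ quasi-nilpotent, and setting
$$S_2 = (T|_M)^{-1}_{right}\oplus 0 = \left[\begin{array}{cc} (T|_M)^{-1}_{right} & 0 \\ 0 & 0 \end{array}\right],$$
a direct computation using $(T|_M)(T|_M)^{-1}_{right} = I$ on $M$ yields $TS_2T = T^2S_2 = T|_M \oplus 0$, then $TS_2^2 = S_2$, and finally $TS_2T - T = 0 \oplus (-T|_N)$, which is quasi-nilpotent by hypothesis. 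This is exactly the operator $S_2$ required in (i).

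The heart of the proof is (ii) $\Rightarrow$ (iv). Given the projection $P$ with $TP = PT$, $T+P$ right invertible and $TP$ quasi-nilpotent, I would use $X = \mathcal{R}(P) \oplus \mathcal{N}(P)$, with both summands reducing $T$ by commutativity. By Lemma \ref{5}, $T|_{\mathcal{N}(P)} = (T+P)|_{\mathcal{N}(P)}$ is right invertible, hence surjective and in particular semi-regular. The same Claim as in Theorem \ref{6} gives $H_0(T|_{\mathcal{R}(P)}) = \mathcal{R}(P)$, so $T|_{\mathcal{R}(P)}$ is quasi-nilpotent, and therefore $(\mathcal{N}(P), \mathcal{R}(P))$ is a GKD for $T$. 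It then remains to identify $K(T)$ with $\mathcal{N}(P)$: invoking the additive behaviour of the analytical core along the reducing decomposition of a GKD (the dual of [\cite{Aiena1}, Corollary 1.69]), together with $K(T|_{\mathcal{N}(P)}) = \mathcal{N}(P)$ for the surjective part and $K(T|_{\mathcal{R}(P)}) = \{0\}$ for the quasi-nilpotent part, I would obtain $K(T) = \mathcal{N}(P)$. Setting $N = \mathcal{R}(P)$ then gives $X = K(T) \oplus N$ with $T|_{K(T)}$ right invertible and $T|_N$ quasi-nilpotent, which is (iv).

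The main obstacle I anticipate is exactly this identification $K(T) = \mathcal{N}(P)$. One must know that the analytical core of a right-invertible (surjective) operator is the whole space: if $T|_{\mathcal{N}(P)}$ has bounded right inverse $S$, then for $x \in \mathcal{N}(P)$ the sequence $u_n = S^n x$ is a backward orbit with $\|u_n\| \leq \|S\|^n \|x\|$, so $x \in K(T|_{\mathcal{N}(P)})$; equivalently this follows from the open mapping theorem. One must also know that $K$ of a quasi-nilpotent operator is $\{0\}$ and that $K$ splits along a GKD. Once these facts about $K(\cdot)$ are secured, the entire argument runs dually parallel to Theorem \ref{6}, with $K(T)$ playing the role there played by $H_0(T)$.
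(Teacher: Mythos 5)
Your proposal is correct and follows essentially the same route as the paper: the same cycle of implications, the same block-matrix computation for (v) $\Rightarrow$ (i), and the same GKD argument for (ii) $\Rightarrow$ (iv), including the backward-orbit claim that $K(T|_{\mathcal{N}(P)})=\mathcal{N}(P)$ when $T|_{\mathcal{N}(P)}$ is right invertible. The only cosmetic difference is that the paper cites [\cite{Aiena1}, Theorem 1.41] directly to conclude $K(T)=K(T|_{\mathcal{N}(P)})$, whereas you assemble the same fact from the splitting of $K$ along the GKD together with $K=\{0\}$ for the quasi-nilpotent part.
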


\begin{proof} (iii) $\Leftrightarrow$ (iv): It is directly from [\cite{Ferreyra}, Theorems 3.4].

(ii) $\Rightarrow$ (iv): Suppose that $P\in \mathcal{B}(X)$ is a projection, then $X=\mathcal{N}(P)\oplus\mathcal{R}(P)$. By the commutativity of $T$ and $P$, $\mathcal{N}(P)$ and $\mathcal{R}(P)$ reduce $T$. Since $T+P$ is right invertible, $T|_{\mathcal{N}(P)}=(T+P)|_{\mathcal{N}(P)}$ is right invertible by Lemma \ref{5}. Utilizing the claim in Theorem \ref{6}, $T|_{\mathcal{R}(P)}$ is quasi-nilpotent. Hence $(\mathcal{N}(P),~\mathcal{R}(P))$ is a GKD for $T$.

\textbf{Claim.} $K(T|_{\mathcal{N}(P)})=\mathcal{N}(P)$.

In fact, let $x$ be any element in $\mathcal{N}(P)$. Since $T|_{\mathcal{N}(P)}$ is right invertible, there is a sequence $\{u_n\}$ such that $u_0=x$, $u_n=(T|_{\mathcal{N}(P)})_{right}^{-n}x$. Thus $u_n=T|_{\mathcal{N}(P)}(u_{n+1})$ and $||u_n||\leq ||(T|_{\mathcal{N}(P)})^{-1}_{right}||^n\cdot||x||.$ Hence $x\in K(T|_{\mathcal{N}(P)})$. Hence $\mathcal{N}(P)\subset  K(T|_{\mathcal{N}(P)})$. On the other hand, if $x\in K(T|_{\mathcal{N}(P)})$, then there is a $u_1\in \mathcal{N}(P)$ such that $T|_{\mathcal{N}(P)}u_1=x$. Thus $K(T|_{\mathcal{N}(P)})\subset \mathcal{N}(P)$.

By [\cite{Aiena1}, Theorem 1.41], $K(T)=K(T|_{\mathcal{N}(P)})=\mathcal{N}(P)$ and so $X=K(T)\oplus \mathcal{R}(P),$
where $T|_{K(T)}$ is right invertible and $T_{\mathcal{R}(P)}$ is quasi-nilpotent.

(iv) $\Rightarrow$ (ii): Suppose that $X=K(T)\oplus N$, $T|_{K(T)}$ is right invertible and $T|_{N}$ is quasi-nilpotent. Let $P\in \mathcal{B}(X)$ be projection from $X$ onto $N$, then $K(T)=\mathcal{N}(P)$ and $N=\mathcal{R}(P)$. For any $x\in X$, there exist $x_1\in K(T)$ and $x_2\in N$ such that $x=x_1+x_2$. Since $K(T)$ and $N$ reduce $T$,
$$PTx=P(T|_{K(T)}x_1+T|_{N}x_2)=T|_Nx_2=TPx.$$
If $T|_N$ is quasi-nilpotent, then $H_0(T|_N)=N$ by [\cite{Aiena1}, Theorem 1.68]. Thus
$$\lim_{n\rightarrow\infty}||(TP)^nx||^{\frac{1}{n}}=\lim_{n\rightarrow\infty}||T^nPx||^{\frac{1}{n}}=\lim_{n\rightarrow\infty}||(T|_N)^nx_2||^{\frac{1}{n}}=0.$$
This implies $TP$ is quasi-nilpotent by [\cite{Aiena1}, Theorem 1.68].

Since $K(T)$=$\mathcal{N}(P)$, $(T+P)|_{\mathcal{N}(P)}=T|_{\mathcal{N}(P)}=T|_{K(T)}$ is right invertible. If $TP$ is quasi-nilpotent, then $TP+I$ is invertible. Thus $(TP+I)|_{\mathcal{R}(P)}$ is invertible by Lemma \ref{5}. This implies that
$$(T+P)|_{\mathcal{R}(P)}=(TP+P)|_{\mathcal{R}(P)}=(TP+I)|_{\mathcal{R}(P)}$$
is invertible. Hence
 $$ (T+P)_{right}^{-1}=[(T+P)|_{K(T)}]_{right}^{-1}\oplus[(T+P)|_N]^{-1}$$
 is the right inverse of $T+P$.

 (iv) $\Rightarrow$ (v): It is obvious.

 (v) $\Rightarrow$ (i): Assume that $T=T|_M\oplus T|_{N}$, $T|_M$ is right invertible and $T|_{N}$ is quasi-nilpotent. Put $S_2=(T|_M)_{right}^{-1}\oplus 0$. Then one can verify that $TS_2T=T^2S_2$, $TS_2^2=S_2$ and $TS_2T-T$ is quasi-nilpotent by dual computations of operator matrix in Theorem \ref{6};

 (i) $\Rightarrow$ (ii): It is clear by Theorem \ref{4}.
 \end{proof}

\begin{theorem} \label{8} Let $T\in \mathcal{B}(X)$. Then the following statements are equivalent:

\emph{(i)} There exits an operator $S_1\in \mathcal{B}(X)$ and an integer $j$, such that
$$TS_1T=S_1T^2, ~S_1^2T=S_1, ~S_1T^{j+1}=T^j;$$

\emph{(ii)} There exists a projection $P\in \mathcal{B}(X)$ such that $TP=PT$, $T+P$ is left invertible and $TP$ is nilpotent with index $j$;

\emph{(iii)} $asc(T):=j<\infty$, the subspaces $\mathcal{N}(T^j)$ and $\mathcal{R}(T^{j+1})$ are topologically complemented in $X$;

\emph{(iv)} There exists a closed subspace $M$ such that $X=M\oplus \mathcal{N}(T^j)$, $T|_M$ is left invertible and $T|_{\mathcal{N}(T^j)}$ is nilpotent, where $M$ and $\mathcal{N}(T^j)$ reduce $T$;

\emph{(v)} There exist subspaces $M$, $N$ reduce $T$ and $T=T|_M\oplus T|_N$, where $T|_M$ is left invertible and $T|_N$ is nilpotent;

\emph{(vi)} There exists a nonnegative integer $j$ such that $\mathcal{N}(T^j)$ is topologically complemented in $X$, $\mathcal{R}(T^j)$ is closed and $T_j: \mathcal{R}(T^j)\rightarrow \mathcal{R}(T^j)$ is left invertible.
\end{theorem}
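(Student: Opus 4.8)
The plan is to prove the six statements equivalent as a web of implications anchored on the reducing decomposition, mirroring the architecture of Theorem \ref{6} but with nilpotency (finite index) replacing quasi-nilpotency and with $\mathcal{N}(T^j)$ playing the structural role that $H_0(T)$ played there. First, (i) $\Leftrightarrow$ (ii) is immediate by applying Theorem \ref{3} to the unital Banach algebra $\mathcal{A}=\mathcal{B}(X)$: the algebraic identities in (i) are exactly left Drazin invertibility with index $j$, which that theorem identifies with the existence of a commuting projection $P$ such that $T+P$ is left invertible and $TP$ is nilpotent of order $j$.

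The main cycle is (ii) $\Rightarrow$ (iv) $\Rightarrow$ (v) $\Rightarrow$ (i). For (ii) $\Rightarrow$ (iv) I would run the GKD argument of Theorem \ref{6}: since $P$ commutes with $T$, $X=\mathcal{N}(P)\oplus\mathcal{R}(P)$ reduces $T$, $T|_{\mathcal{N}(P)}=(T+P)|_{\mathcal{N}(P)}$ is left invertible by Lemma \ref{5}, and $T|_{\mathcal{R}(P)}=(TP)|_{\mathcal{R}(P)}$ is nilpotent of index $j$. The new point is to identify $\mathcal{R}(P)$: because $T|_{\mathcal{N}(P)}$ is injective and $(T|_{\mathcal{R}(P)})^j=0$, a direct null-space computation gives $\mathcal{N}(T^j)=\{0\}\oplus\mathcal{R}(P)=\mathcal{R}(P)$ and $\mathrm{asc}(T)=j$, so $M:=\mathcal{N}(P)$ yields (iv). The step (iv) $\Rightarrow$ (v) is trivial, and (v) $\Rightarrow$ (i) is the explicit block construction $S_1=(T|_M)^{-1}_{left}\oplus 0$, verifying $TS_1T=S_1T^2$, $S_1^2T=S_1$ by the same computation as in Theorem \ref{6} and the new identity $S_1T^{j+1}=(T|_M)^j\oplus 0=T^j$, the last equality using $(T|_N)^j=0$.

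It remains to splice in the analytic conditions (iii) and (vi). The easy directions are (iv) $\Rightarrow$ (iii) and (v) $\Rightarrow$ (vi): from the block form one reads off $\mathcal{N}(T^j)=\mathcal{R}(P)$ (complemented by $M$), $\mathrm{asc}(T)=j$, $\mathcal{R}(T^{j+1})=\mathcal{R}((T|_M)^{j+1})$ and $\mathcal{R}(T^j)=\mathcal{R}((T|_M)^j)$; since $T|_M$ is left invertible these ranges are closed and topologically complemented, and $T_j=(T|_M)|_{\mathcal{R}(T^j)}$ inherits injectivity with complemented range, hence is left invertible. The substantive work is the return trip (iii) $\Rightarrow$ (iv) and (vi) $\Rightarrow$ (iv). Here I would use the isomorphisms $\mathcal{N}(T^{n+1})/\mathcal{N}(T^n)\simeq\mathcal{N}(T)\cap\mathcal{R}(T^n)$ and $\mathcal{R}(T^n)/\mathcal{R}(T^{n+1})\simeq X/[\mathcal{R}(T)+\mathcal{N}(T^n)]$ from [\cite{Kaashoek}, Lemma 3.1]: finite ascent $j$ forces $\mathcal{N}(T)\cap\mathcal{R}(T^j)=\{0\}$ (so $T_j$ is injective) and $\mathcal{N}(T^j)\cap\mathcal{R}(T^j)=\{0\}$, while the complementation hypotheses upgrade this to a reducing decomposition $X=M\oplus\mathcal{N}(T^j)$ with $T|_M$ bounded below and with complemented range, i.e. left invertible. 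The finite-ascent Kato-type structure theory of Kaashoek--Lay (in the spirit of [\cite{Ferreyra}, Theorem 3.3], which was invoked for Theorem \ref{6}) supplies the construction of the invariant complement $M$.

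The main obstacle is precisely this construction of the $T$-reducing closed complement $M$ of $\mathcal{N}(T^j)$ from the analytic hypotheses. Unlike the classical finite ascent-\emph{and}-descent situation, where $X=\mathcal{N}(T^j)\oplus\mathcal{R}(T^j)$ holds automatically, here only the ascent is finite, so $\mathcal{R}(T^j)$ is in general \emph{not} a complement of $\mathcal{N}(T^j)$; the content of (iii) and (vi) is that the extra complementation of $\mathcal{N}(T^j)$ and $\mathcal{R}(T^{j+1})$ (respectively the left invertibility of $T_j$ on $\mathcal{R}(T^j)$) is exactly what is needed to manufacture an invariant subspace on which $T$ is left invertible. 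The delicate step is passing from ``complemented inside $\mathcal{R}(T^j)$'' to ``complemented inside $X$'' and verifying that the resulting complement genuinely reduces $T$.
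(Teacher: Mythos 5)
Your main cycle (i) $\Leftrightarrow$ (ii) $\Rightarrow$ (iv) $\Rightarrow$ (v) $\Rightarrow$ (i) coincides with the paper's proof: (i) $\Rightarrow$ (ii) via Theorem \ref{3}, the projection argument identifying $\mathcal{R}(P)=\mathcal{N}(T^j)$ for (ii) $\Rightarrow$ (iv), and the block construction $S_1=(T|_M)^{-1}_{left}\oplus 0$ for (v) $\Rightarrow$ (i). Your easy directions (iv) $\Rightarrow$ (iii) and (v) $\Rightarrow$ (vi) also check out.

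The gap is exactly where you flagged it. For (iii) $\Rightarrow$ (iv) and (vi) $\Rightarrow$ (iv) you assert that ``the finite-ascent Kato-type structure theory of Kaashoek--Lay \ldots supplies the construction of the invariant complement $M$,'' and then concede in the next paragraph that this construction is ``the main obstacle'' and ``delicate'' without carrying it out. The Kaashoek--Lay isomorphisms $\mathcal{N}(T^{n+1})/\mathcal{N}(T^n)\simeq\mathcal{N}(T)\cap\mathcal{R}(T^n)$ are dimension-counting tools; they do not produce a \emph{closed, $T$-invariant} complement of $\mathcal{N}(T^j)$ in a general Banach space, and the complementation hypotheses in (iii) give you complements that need not be invariant under $T$. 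This is precisely the difficulty the paper's own remark (following Theorem \ref{9}) singles out: Ghorbel and Mnif obtained the existence of the reducing pair $M$, $N$ only by invoking M\"uller's technical theorem on the Kato decomposition of quasi-Fredholm operators, not by elementary means. The paper sidesteps the issue entirely by citing [Minf, Theorems 3.3 and 3.4] for the equivalences (ii) $\Leftrightarrow$ (iii) $\Leftrightarrow$ (vi), so that conditions (iii) and (vi) re-enter the cycle through (ii) rather than through (iv). As written, your proposal leaves the two hardest implications unproved; either supply the M\"uller-type decomposition argument or, as the paper does, route (iii) and (vi) through (ii) by citation.
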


\begin{proof} (i)$\Rightarrow$ (ii): It is clear by Theorem \ref{3};

(ii) $\Rightarrow$ (iv): Suppose that $P\in \mathcal{B}(X)$ is a projection, then $X=\mathcal{R}(P)\oplus\mathcal{N}(P)$. By the commutativity of $T$ and $P$, $\mathcal{N}(P)$ and $\mathcal{R}(P)$ reduce $T$. Since $T+P$ is left invertible, $(T+P)|_{\mathcal{N}(P)}$ is left invertible by Lemma \ref{5}. Thus $T|_{\mathcal{N}(P)}=(T+P)|_{\mathcal{N}(P)}$ is left invertible.
Let $x$ be any element in $\mathcal{R}(P)$, then there is a $y\in X$ such that $x=Py$. Thus $T^jx=T^jPy=(TP)^jy=0.$
 This implies that $x\in \mathcal{N}(T^j|_{\mathcal{R}(P)})$ and so $\mathcal{R}(P)\subseteq \mathcal{N}(T^j|_{\mathcal{R}(P)})$. Hence $\mathcal{N}(T^j|_{\mathcal{R}(P)})=\mathcal{R}(P)$. Since $T|_{\mathcal{N}(P)}$ is left invertible,
$$\mathcal{N}(T^j)=\mathcal{N}(T^j|_{\mathcal{N}(P)})\oplus \mathcal{N}(T^j|_{\mathcal{R}(P)})=\{0\}\oplus\mathcal{R}(P)=\mathcal{R}(P).$$
Therefore
$X=\mathcal{N}(P)\oplus \mathcal{N}(T^j)$,
where $T|_{\mathcal{N}(P)}$ is left invertible and $T|_{\mathcal{N}(T^j)}$ is nilpotent with order $j$.

(iv) $\Rightarrow$ (v): It is obvious.

(v) $\Rightarrow$ (i): Assume that $T=T|_M\oplus T|_{N}$, $T|_M$ is left invertible and $T|_{N}$ is nilpotent with index $j$. Put $S_1=(T|_M)_{left}^{-1}\oplus 0$. Then one can verify that $TS_1T=S_1T^2$, $TS_1^2=S_1$ and $S_1T^{j+1}=T^j$ by similar computations of operator matrix in Theorem \ref{6};

(ii) $\Leftrightarrow$ (iii) $\Leftrightarrow$ (vi): It is directly from [\cite{Minf}, Theorem 3.3 and 3.4].
\end{proof}

Dually, we have following characterizations of right Drazin invertibility.

\begin{theorem} \label{9} Let $T\in \mathcal{B}(X)$. Then the following statements are equivalent:

\emph{(i)} There exits an operator $S_2\in \mathcal{B}(X)$ and an integer $j$, such that
$$TS_2T=T^2S_2, ~TS_2^2=S_2, ~T^{j+1}S_2=T^j;$$

\emph{(ii)} There exists a projection $P\in \mathcal{B}(X)$ such that $TP=PT$, $T+P$ is right invertible and $TP$ is nilpotent with index $j$;

\emph{(iii)} $dsc(T):=j<\infty$ and the subspace $\mathcal{R}(T^{j})\cap\mathcal{N}(T)$ is topologically complemented in $X$;

\emph{(iv)} There exists a closed subspace $N$ such that $X=\mathcal{R}(T^j)\oplus N$, $T|_{\mathcal{R}(T^j)}$ is right invertible and $T|_{N}$ is nilpotent with index $j$, where $\mathcal{R}(T^j)$ and $N$ reduce $T$;

\emph{(v)} There exist subspaces $M$, $N$ reduce $T$ and $T=T|_M\oplus T|_N$, where $T|_M$ is right invertible and $T|_N$ is nilpotent;

\emph{(vi)} There exists a nonnegative integer $j$ such that $\mathcal{R}(T^j)$ is topologically complemented in $X$ and $T: \mathcal{R}(T^j)\rightarrow \mathcal{R}(T^j)$ is right invertible.
\end{theorem}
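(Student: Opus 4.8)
The plan is to prove the equivalences by following the dual of Theorem \ref{8}, systematically replacing left invertibility by right invertibility, null spaces by ranges, and ascent by descent. The cycle I would establish directly is (i) $\Rightarrow$ (ii) $\Rightarrow$ (iv) $\Rightarrow$ (v) $\Rightarrow$ (i), and then I would close off (iii) and (vi) by invoking the dual of the quoted Minf results. First, (i) $\Rightarrow$ (ii) is immediate: condition (i) is exactly the operator form of right Drazin invertibility with index $j$, so the equivalence (iv) $\Leftrightarrow$ (v) of Theorem \ref{3} supplies a projection $P$ with $TP=PT$, $T+P$ right invertible, and $TP$ nilpotent of order $j$.

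The heart of the argument is (ii) $\Rightarrow$ (iv). Given such a projection $P$, I write $X=\mathcal{N}(P)\oplus\mathcal{R}(P)$; since $T$ commutes with $P$, both summands reduce $T$. From Lemma \ref{5} and the right invertibility of $T+P$, I get that $T|_{\mathcal{N}(P)}=(T+P)|_{\mathcal{N}(P)}$ is right invertible, hence surjective on $\mathcal{N}(P)$, so that $\mathcal{R}((T|_{\mathcal{N}(P)})^j)=\mathcal{N}(P)$. On the other summand, $TP$ nilpotent of order $j$ forces $T|_{\mathcal{R}(P)}$ to be nilpotent of order $j$, whence $(T|_{\mathcal{R}(P)})^j=0$. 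Computing $T^j$ blockwise along the reducing decomposition then yields the key identity $\mathcal{R}(T^j)=\mathcal{N}(P)$, so that $X=\mathcal{R}(T^j)\oplus\mathcal{R}(P)$ with $N:=\mathcal{R}(P)$, where $T|_{\mathcal{R}(T^j)}$ is right invertible and $T|_N$ is nilpotent of index $j$. This is precisely the dual of the claim $\mathcal{N}(T^j)=\mathcal{R}(P)$ used in Theorem \ref{8}.

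The remaining implications are routine. (iv) $\Rightarrow$ (v) is obvious on taking $M=\mathcal{R}(T^j)$. For (v) $\Rightarrow$ (i), writing $T=T|_M\oplus T|_N$ with $T|_M$ right invertible and $T|_N$ nilpotent of index $j$, I set $S_2=(T|_M)_{right}^{-1}\oplus 0$ and verify $TS_2T=T^2S_2$, $TS_2^2=S_2$ and $T^{j+1}S_2=T^j$ by the same $2\times 2$ block computation as in Theorem \ref{6}, the last identity using $(T|_N)^j=0$. Finally, the equivalences (ii) $\Leftrightarrow$ (iii) $\Leftrightarrow$ (vi) would follow from the dual forms of [\cite{Minf}, Theorems 3.3 and 3.4], exactly as in Theorem \ref{8}.

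The only genuinely non-formal step is (ii) $\Rightarrow$ (iv), and within it the point that needs care is that right invertibility of $T|_{\mathcal{N}(P)}$ gives surjectivity, so that iterating keeps $\mathcal{R}((T|_{\mathcal{N}(P)})^j)=\mathcal{N}(P)$ rather than shrinking it. This is what makes $\mathcal{R}(T^j)$, and not $\mathcal{N}(T^j)$, the correct reducing summand in the right-handed setting and explains why condition (iii) here involves $\mathcal{R}(T^j)\cap\mathcal{N}(T)$ and the descent, dual to the ascent and $\mathcal{N}(T^j),\ \mathcal{R}(T^{j+1})$ of Theorem \ref{8}. Everything else is a faithful transcription of the left-handed proof.
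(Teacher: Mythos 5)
Your proposal is correct and follows essentially the same route as the paper: the cycle (i) $\Rightarrow$ (ii) $\Rightarrow$ (iv) $\Rightarrow$ (v) $\Rightarrow$ (i) with the same key identification $\mathcal{R}(T^j)=\mathcal{N}(P)$ obtained from surjectivity of the right-invertible restriction $T|_{\mathcal{N}(P)}$, the same block computation for (v) $\Rightarrow$ (i), and the same appeal to the dual Minf results for (ii) $\Leftrightarrow$ (iii) $\Leftrightarrow$ (vi). No substantive differences.
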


\begin{proof} (i)$\Rightarrow$ (ii): It is clear by Theorem \ref{3};

(ii) $\Rightarrow$ (iv): Suppose that $P\in \mathcal{B}(X)$ is a projection, then $X=\mathcal{R}(P)\oplus\mathcal{N}(P)$. By the commutativity of $T$ and $P$, $\mathcal{N}(P)$ and $\mathcal{R}(P)$ reduce $T$. Since $T+P$ is right invertible, $(T+P)|_{\mathcal{N}(P)}$ is right invertible by Lemma \ref{5}. Thus $T|_{\mathcal{N}(P)}=(T+P)|_{\mathcal{N}(P)}$ is right invertible.
Let $x$ be any element in $\mathcal{R}(P)$, then there is a $y\in X$ such that $x=Py$. Since $T^jx=T^jPy=(TP)^jy=0,$
 $x\in \mathcal{N}(T^j|_{\mathcal{R}(P)})$ and so $\mathcal{R}(P)=\mathcal{N}(T^j|_{\mathcal{R}(P)})$. Hence $T|_{\mathcal{R}(P)}$ is nilpotent with index $j$. Since $T=T|_{\mathcal{N}(P)}\oplus T|_{\mathcal{R}(P)}$, $T^j=T^j|_{\mathcal{N}(P)}$ and so that $\mathcal{R}(T^j)=\mathcal{R}(T^j|_{\mathcal{N}(P)})$. Let $x$ be any element in $\mathcal{N}(P)$. If $T|_{\mathcal{N}(P)}$ is right invertible, then $x=T^j|_{\mathcal{N}(P)}\cdot (T|_{\mathcal{N}(P)})^{-j}_{right}x\in \mathcal{R}(T^j|_{\mathcal{N}(P)})$. Thus $\mathcal{N}(P)=\mathcal{R}(T^j|_{\mathcal{N}(P)})$ and so that $\mathcal{R}(T^j)=\mathcal{N}(P).$ Hence
$$X=\mathcal{N}(P)\oplus \mathcal{R}(P)=\mathcal{R}(T^j)\oplus \mathcal{R}(P),$$
where $T|_{\mathcal{R}(T^j)}$ is right invertible and $T|_{\mathcal{R}(P)}$ is nilpotent with order $j$.

(iv) $\Rightarrow$ (v): It is obvious.

(v) $\Rightarrow$ (i): Assume that $T=T|_M\oplus T|_{N}$, $T|_M$ is right invertible and $T|_{N}$ is nilpotent with index $j$. Put $S_1=(T|_M)_{right}^{-1}\oplus 0$. Then one can verify that $TS_2T=T^2S_2, ~TS_2^2=S_2$ and $T^{j+1}S_2=T^j$ by similar computations of operator matrix in Theorem \ref{6};

(ii) $\Leftrightarrow$ (iii) $\Leftrightarrow$ (vi): It is directly from [\cite{Minf}, Theorem 3.4].
\end{proof}

\begin{remark} \emph{The equivalences of (iii) and (v) in Theorems \ref{8} and \ref{9} have been obtained by Ghorbel and Mnif in [\cite{Minf}]. It is not easy to prove ``(iii) $\Rightarrow$ (v)" because it is difficult to find the uncertain closed subspaces $M$ and $N$ reducing $T$. In [\cite{Minf}], Ghorbel and Mnif utilized M$\ddot{\makebox{u}}$ller's technical result ([\cite{Muller1}, Theorem 5]) to obtain the existence of $M$ and $N$. But it seems impossible to find a substitution for generalized Drazin cases. In this paper, we use our algebraic characterization of left and right (generalized) Drazin invertibility to fill this gap. This method could substitute for M$\ddot{\makebox{u}}$ller's technical result and provide alternative proofs of Theorems \ref{8} and \ref{9}. These show that our algebraic characterization is essential in studying left and right (generalized) Drazin invertibility in category of bounded linear operators.
}
\end{remark}

In the following, we will show a slight application of our algebraic characterization of left and right generalized Drazin invertibility.

\begin{theorem} \label{3.2.11} Let $T\in \mathcal{B}(X)$ and $T^*$ denote the adjoint of $T$. Then

\emph{(i)} $T$ is left Drazin invertible if and only if $T^*$ is right Drazin invertible.

\emph{(ii)} $T$ is left generalized Drazin invertible if and only if $T^\ast$ is right generalized Drazin invertible.

\emph{(iii)} $T$ is (generalized) Drazin invertible if and only if $T^*$ is (generalized) Drazin invertible.
\end{theorem}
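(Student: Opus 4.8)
The plan is to use that the Banach-space adjoint $T\mapsto T^{*}$ is an isometric \emph{anti}-homomorphism of $\mathcal{B}(X)$ into $\mathcal{B}(X^{*})$: it reverses products, $(AB)^{*}=B^{*}A^{*}$, and satisfies $\|(T^{*})^{n}\|=\|T^{n}\|$, so it turns every ``left'' algebraic identity into the corresponding ``right'' one and preserves both nilpotency (with its index) and quasi-nilpotency. The forward halves of (i) and (ii) are then immediate. If $S_{1}$ is a left Drazin inverse of $T$ of index $j$, then by Theorem \ref{8}(i) $TS_{1}T=S_{1}T^{2}$, $S_{1}^{2}T=S_{1}$, $S_{1}T^{j+1}=T^{j}$; transposing and setting $S_{2}:=S_{1}^{*}$ gives $T^{*}S_{2}T^{*}=(T^{*})^{2}S_{2}$, $T^{*}S_{2}^{2}=S_{2}$, $(T^{*})^{j+1}S_{2}=(T^{*})^{j}$, which are exactly the equations of Theorem \ref{9}(i), so $T^{*}$ is right Drazin invertible of index $j$. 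For (ii) one replaces the third identity by $TS_{1}T-T\in\mathcal{B}(X)^{qnil}$ (Theorem \ref{6}(i)); since $*$ preserves quasi-nilpotency, $T^{*}S_{2}T^{*}-T^{*}\in\mathcal{B}(X^{*})^{qnil}$ and Theorem \ref{7}(i) applies. Interchanging ``left'' and ``right'' throughout gives the companion implications that $T$ right (generalized) Drazin invertible forces $T^{*}$ left (generalized) Drazin invertible.

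For (iii) I would combine the one-sided statements with the two-sided characterizations already proved: by Theorem \ref{2} (resp.\ Theorem \ref{2.11}) an operator is (generalized) Drazin invertible precisely when it is both left and right (generalized) Drazin invertible. Hence $T$ is (generalized) Drazin invertible $\iff$ $T$ is left and right (generalized) Drazin invertible $\iff$ $T^{*}$ is right and left (generalized) Drazin invertible (by (i), (ii) and their companions) $\iff$ $T^{*}$ is (generalized) Drazin invertible. It is worth noting that (iii) is in fact independent of the one-sided theory: since $\sigma(T)=\sigma(T^{*})$ and $(\lambda-T^{*})^{-1}=[(\lambda-T)^{-1}]^{*}$, the Riesz idempotent of $T^{*}$ at $0$ is the adjoint of that of $T$, and two-sided (generalized) Drazin invertibility is exactly the spectral condition that $0$ be a pole (resp.\ not accumulate in $\sigma$).

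The genuine difficulty, which I expect to consume the whole proof, is the converse of (i) and (ii): passing from the right property of $T^{*}$ back to the left property of $T$. The adjoint map is \emph{not} surjective onto $\mathcal{B}(X^{*})$, so a right Drazin inverse of $T^{*}$ furnished by Theorem \ref{9} need not be weak$^{*}$-continuous and cannot be transposed back directly; equivalently, the left and right spectra do not dualize ($T^{*}$ right invertible only yields $T$ bounded below, not left invertible, since complementation of $\mathcal{R}(T)^{\perp}$ in $X^{*}$ need not descend to complementation of $\mathcal{R}(T)$ in $X$). My plan is to route through the bidual: applying the forward implication to $T^{*}\in\mathcal{B}(X^{*})$ shows $T^{**}$ is left (generalized) Drazin invertible, and I would then transfer this property along the canonical isometric embedding $J\colon X\to X^{**}$, which satisfies $T^{**}J=JT$ and realizes $J(X)$ as a $T^{**}$-invariant isometric copy of $T$. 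Finiteness of the ascent descends for free: if $x\in\mathcal{N}(T^{j+1})$ then $Jx\in\mathcal{N}((T^{**})^{j+1})=\mathcal{N}((T^{**})^{j})$, whence $T^{j}x=0$ by injectivity of $J$.

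What remains, and is the crux, is to push the topological complements of Theorem \ref{8}(iii)/\ref{6}(iii) from $X^{**}$ down to $X$. Writing $\Pi$ for the left Drazin idempotent of $T^{**}$, the descent succeeds exactly when $J(X)$ is $\Pi$-invariant, i.e.\ when $\Pi$ is a second adjoint $\Pi=P^{**}$ for a projection $P$ on $X$; then $\Pi|_{J(X)}$ is a projection onto $J(\mathcal{N}(T^{j}))$ and complementation follows. This is precisely where (iii) is painless and (i), (ii) are not: in the two-sided case $\Pi$ is the Riesz projection, hence automatically a bi-adjoint, whereas the \emph{one-sided} idempotent is attached to the left/right spectrum rather than to an isolated point of $\sigma(T)$, so its bi-adjointness is not automatic. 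Establishing this last point---by dualizing the reducing decomposition of Theorem \ref{8}(v)/\ref{6}(v) and verifying that the induced projections on $X$ are the preduals of those on $X^{**}$---is the step I expect to require the real work.
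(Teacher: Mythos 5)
Your forward implications and part (iii) coincide with the paper's own argument: the paper likewise transposes the defining identities $TS_1T=S_1T^2$, $S_1^2T=S_1$, $S_1T^{j+1}=T^j$ (resp.\ replaces the last one by quasi-nilpotency of $TS_1T-T$, preserved because $\|(T^*)^n\|=\|(T^n)^*\|=\|T^n\|$) to exhibit $S_1^*$ as a right (generalized) Drazin inverse of $T^*$, and it deduces (iii) from (i) and (ii). The genuine gap is the converse of (i) and (ii), which you explicitly leave open: you reduce it to showing that the left Drazin idempotent of $T^{**}$ leaves $J(X)$ invariant and then defer ``the real work.'' The paper disposes of this step in a single sentence (``by restricting the double dual operator on $X$''), i.e.\ it asserts exactly the point you could not establish; so your proposal does not close the gap, and the paper's proof does not supply the missing argument either.

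Moreover, your worry is not a technicality that more effort would remove: the restriction step fails in general Banach spaces. Already for index $j=0$ the converse of (i) asserts that $T^*$ right invertible implies $T$ left invertible. Take $X=c_0\oplus\ell^\infty$, write $\ell^\infty=E_1\oplus E_2$ with $E_i\simeq\ell^\infty$, and set $T(z,y)=(0,\iota z+\phi y)$, where $\iota\colon c_0\to E_1$ is the canonical embedding and $\phi\colon\ell^\infty\to E_2$ is an isomorphism. Then $T$ is bounded below with closed range, but $\mathcal{R}(T)=\{0\}\oplus(\iota(c_0)\oplus E_2)$ is not complemented in $X$ (otherwise $c_0$ would be complemented in $\ell^\infty$, contradicting Phillips' theorem); since $T$ is injective, Theorem \ref{8} shows $T$ is not left Drazin invertible. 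Yet $T^*$ is surjective and $\mathcal{N}(T^*)=\mathcal{R}(T)^{\perp}=\ell^1\oplus W^{\perp}$ is complemented in $X^*$ by the Yosida--Hewitt decomposition $(\ell^\infty)^*=\ell^1\oplus c_0^{\perp}$, so $T^*$ is right invertible and a fortiori right Drazin invertible. Hence the ``only if'' halves of (i) and (ii) cannot be recovered from the bidual in general: they hold when $X$ is reflexive (where $J$ is onto and your argument closes immediately), and in Hilbert spaces (where complementation is automatic), but in an arbitrary Banach space only the implication you actually proved survives.
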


\begin{proof} (i) Suppose that $T$ has left Drazin inverse $S$, that is $TST=ST^2,~S^2T=T$ and $ST^{j+1}=T^j$. Then $T^*S^*T^*=(T^*)^2S^*$, $T^*(S^*)^2=T^*$ and $(T^*)^{j+1}S^*=(T^*)^j$. This shows that $S^*$ is the right Drazin inverse of $T^*$. Conversely, if $T^*$ is right Drazin invertible, then $T^{**}$ is left Drazin invertible. By restricting the double dual operator on $X$, the left Drazin invertibility of $T$ is obtained.

(ii) The proof is similar to (i)

(iii) It is directly from (i) and (ii).
\end{proof}

Define the one-sided Drazin and generalized Drazin spectra as follows
$$\sigma_{LD}(T):=\{\lambda\in \mathbb{C}: \lambda I-T\makebox{~is~not~left~Drazin~invertible}\},$$
$$\sigma_{RD}(T):=\{\lambda\in \mathbb{C}: \lambda I-T\makebox{~is~not~right~Drazin~invertible}\},$$
$$\sigma_{LGD}(T):=\{\lambda\in \mathbb{C}: \lambda I-T\makebox{~is~not~left~generalized~Drazin~invertible}\},$$
and
$$\sigma_{RGD}(T):=\{\lambda\in \mathbb{C}: \lambda I-T\makebox{~is~not~right~generalized~Drazin~invertible}\}.$$
We will see that the left and right Drazin invertible spectra play a key role in characterizing B-Fredholm spectra in next section. Now, by Theorem \ref{3.2.11}, we immediately obtain the following corollary about these spectra.

\begin{corollary} Let $T\in \mathcal{B}(X)$, then
$$\sigma_{LD}(T)=\overline{\sigma_{RD}(T^*)},~\sigma_{RD}(T)=\overline{\sigma_{LD}(T^*)}$$
and
$$\sigma_{LGD}(T)=\overline{\sigma_{RGD}(T^*)},~\sigma_{RGD}(T)=\overline{\sigma_{LGD}(T^*)}.$$
\end{corollary}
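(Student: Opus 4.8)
The plan is to prove the corollary as a direct application of the preceding Theorem~\ref{3.2.11}, since the four one-sided Drazin spectra are defined spectrally in terms of the very invertibility notions that Theorem~\ref{3.2.11} relates under adjunction. First I would fix $\lambda \in \mathbb{C}$ and observe the elementary fact that $(\lambda I - T)^* = \overline{\lambda} I - T^*$, where $\overline{\lambda}$ is the complex conjugate. This is the only place where complex conjugation enters, and it is the source of the conjugate (closure) bar on the spectra in the statement; I would state it explicitly at the outset so that the bookkeeping is transparent.

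Next I would apply Theorem~\ref{3.2.11}(i) with $\lambda I - T$ in place of $T$. That theorem says $S \in \mathcal{B}(X)$ is left Drazin invertible if and only if $S^*$ is right Drazin invertible. Taking $S = \lambda I - T$, this reads: $\lambda I - T$ is left Drazin invertible if and only if $(\lambda I - T)^* = \overline{\lambda} I - T^*$ is right Drazin invertible. Passing to the complements (the ``not invertible'' sets that define the spectra), this gives $\lambda \in \sigma_{LD}(T)$ if and only if $\overline{\lambda} \in \sigma_{RD}(T^*)$. I would then note that the set of $\lambda$ with $\overline{\lambda} \in \sigma_{RD}(T^*)$ is exactly the complex conjugate of $\sigma_{RD}(T^*)$, i.e.\ $\overline{\sigma_{RD}(T^*)}$ (here the bar denotes the image under conjugation, $\overline{E} := \{\overline{\mu} : \mu \in E\}$, not topological closure). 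This yields the first equality $\sigma_{LD}(T) = \overline{\sigma_{RD}(T^*)}$. The second equality $\sigma_{RD}(T) = \overline{\sigma_{LD}(T^*)}$ follows by the same argument using the reverse implication of Theorem~\ref{3.2.11}(i), and the two generalized-Drazin equalities follow identically from Theorem~\ref{3.2.11}(ii).

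The only point requiring a word of care, and the one I would flag as the main (mild) obstacle, is the interpretation of the bar: the reader must read it as the conjugate-image operator on subsets of $\mathbb{C}$ rather than as topological closure, and the conjugation is forced precisely because adjunction sends $\lambda I - T$ to $\overline{\lambda} I - T^*$. Once this notational convention is pinned down, the proof is a one-line translation of Theorem~\ref{3.2.11} into spectral language, with no further analytic content. I would therefore keep the write-up short, essentially stating the conjugation identity $(\lambda I - T)^* = \overline{\lambda} I - T^*$ and then invoking Theorem~\ref{3.2.11}(i) and (ii) to read off all four equalities.
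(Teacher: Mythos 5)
Your proposal is correct and is exactly the paper's (unwritten) argument: the corollary is stated as an immediate consequence of Theorem~\ref{3.2.11}, obtained by applying that theorem to $\lambda I-T$ for each $\lambda$ and passing to complements, just as you do. The only caveat is your conjugation identity: for the Banach-space dual operator (which is linear, not conjugate-linear) one has $(\lambda I-T)^{*}=\lambda I-T^{*}$ with no conjugate, so the bar in the statement is either vacuous or reflects a Hilbert-space convention; your reading of the bar as the conjugate-image of the set is the charitable interpretation and does not affect the validity of the argument.
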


At the end of this section, we observe a good example of left and right Drazin invertible operator -- left and right Drazin invertible matrix. The Drazin invertible matrix is useful in difference equations, Markov chains and numerical analysis, see [\cite{Campbell}, \cite{Xu}]. It is interesting to explore possible applications of left and right Drazin invertible matrix. Unfortunately, the following conclusions show that the one-sided Drazin invertibility and Drazin invertibility are the same thing in the category of matrices.

\begin{proposition} Let $X$ be a n-dimensional vector space and $A$ be a $n\times n$ matrix, then $asc(A)<\infty$ is equivalent to $dsc(A)<\infty$. Hence $asc(A)=dsc(A)$.
\end{proposition}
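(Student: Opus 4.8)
The plan is to use the fact that for a matrix $A$ on a finite-dimensional space $X$, the ascent and descent are controlled by the dimension and by the well-known fact that $A$ admits a core–nilpotent (Fitting) decomposition. First I would recall the elementary dimension-counting facts: since $\mathcal{N}(A^n)\subseteq \mathcal{N}(A^{n+1})$ is an increasing chain of subspaces of the finite-dimensional space $X$, the chain must stabilize, so $\mathrm{asc}(A)<\infty$ always; dually $\mathcal{R}(A^n)\supseteq\mathcal{R}(A^{n+1})$ is a decreasing chain, so $\mathrm{dsc}(A)<\infty$ always. Hence \emph{both} the ascent and descent of any finite matrix are automatically finite, which disposes of the equivalence ``$\mathrm{asc}(A)<\infty \Leftrightarrow \mathrm{dsc}(A)<\infty$'' trivially (both sides always hold).

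The substantive content is therefore the equality $\mathrm{asc}(A)=\mathrm{dsc}(A)$. To prove this I would invoke the rank–nullity theorem at each power: for every $k$,
\[
\dim\mathcal{N}(A^k)+\dim\mathcal{R}(A^k)=\dim X=n.
\]
The ascent $p=\mathrm{asc}(A)$ is the least $k$ with $\dim\mathcal{N}(A^k)=\dim\mathcal{N}(A^{k+1})$, and the descent $q=\mathrm{dsc}(A)$ is the least $k$ with $\dim\mathcal{R}(A^k)=\dim\mathcal{R}(A^{k+1})$. Because the two dimensions sum to the constant $n$, stabilization of $\dim\mathcal{N}(A^k)$ at level $k$ happens exactly when $\dim\mathcal{R}(A^k)$ stabilizes at level $k$. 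More precisely, $\dim\mathcal{N}(A^{k})=\dim\mathcal{N}(A^{k+1})$ if and only if $\dim\mathcal{R}(A^{k})=\dim\mathcal{R}(A^{k+1})$, and one checks that once either sequence is constant at one step it is constant thereafter (the nullity increments are non-increasing, and range inclusions propagate). Comparing the least indices at which each sequence first becomes stationary then forces $p=q$.

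The step I expect to require the most care is verifying that ``stationary at one step'' genuinely implies ``stationary forever'' for these chains, since that is what makes $\mathrm{asc}$ and $\mathrm{dsc}$ well-defined as the \emph{first} stabilization index rather than merely some index of equality. For the kernel chain this is the standard fact that $\mathcal{N}(A^{k})=\mathcal{N}(A^{k+1})$ implies $\mathcal{N}(A^{k+1})=\mathcal{N}(A^{k+2})$, proved by applying $A$ to the defining relation; the range chain is handled dually. Once this monotone-stabilization lemma is in hand, the rank–nullity identity pins the two first-stabilization indices to the same value, giving $\mathrm{asc}(A)=\mathrm{dsc}(A)$ and completing the proof.
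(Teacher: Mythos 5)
Your argument is correct, and for the half of the statement that carries real content it takes a different route from the paper. For the finiteness assertion you observe (correctly) that both $\mathrm{asc}(A)$ and $\mathrm{dsc}(A)$ are automatically finite because increasing and decreasing chains of subspaces of a finite-dimensional space must stabilize, so the stated equivalence is trivially true; the paper instead proves the implication $\mathrm{asc}(A)<\infty\Rightarrow \mathrm{dsc}(A)<\infty$ directly, applying rank--nullity at the stabilization index $p$ to get $\dim\mathcal{R}(A^p)=\dim\mathcal{R}(A^{p+1})$ and hence $\mathcal{R}(A^p)=\mathcal{R}(A^{p+1})$. The genuine divergence is in the equality $\mathrm{asc}(A)=\mathrm{dsc}(A)$: the paper simply cites M\"uller's general theorem that a Banach space operator with finite ascent and finite descent has the two equal, whereas you derive the equality from scratch via the identity $\dim\mathcal{N}(A^k)+\dim\mathcal{R}(A^k)=n$ at every power $k$, which shows that $\mathcal{N}(A^k)=\mathcal{N}(A^{k+1})$ if and only if $\mathcal{R}(A^k)=\mathcal{R}(A^{k+1})$ and hence that the two first-stabilization indices coincide. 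Your version is more elementary and fully self-contained (and, as you note, the ``stationary once implies stationary forever'' lemma is not even needed for the coincidence of the infima, only for the standard well-definedness of ascent and descent); the paper's citation buys nothing extra here except brevity, since the general theorem it invokes is far stronger than what the finite-dimensional situation requires.
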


\begin{proof} If $asc(A)<\infty$, then there exists an integer $p$ such that $\mathcal{N}(A^p)=\mathcal{N}(A^{p+1})$. From
$$\makebox{dim}X=\makebox{dim}\mathcal{N}(A^p)+\makebox{dim}\mathcal{R}(A^p)=\makebox{dim}\mathcal{N}(A^{p+1})+\makebox{dim}\mathcal{R}(A^{p+1}),$$
 it follows that dim$\mathcal{R}(A^{p+1})=$dim$\mathcal{R}(A^p)$. Since $\mathcal{R}(A^p)\supset\mathcal{R}(A^{p+1})$, $\mathcal{R}(A^p)=\mathcal{R}(A^{p+1})$.
 Hence $dsc(A)<\infty$. The converse is similar. Moreover, if $asc(T)<\infty$ and $dsc(T)<\infty$, then $asc(A)=dsc(A)$ by [\cite{Muller}, p. 173, Corollary 5].
\end{proof}

If $X$ is a n-dimensional vector space, then any subspace of $X$ is topologically complemented in $X$. Thus the $n\times n$ matrix $A$ is left (resp. right) Drazin invertible if and only if $asc(A)<\infty$ (resp. $dsc(A)<\infty$). By this observation, a generalization of classical cases of invertible matrix will be obtained.

\begin{corollary} Let $X$ be n-dimensional vector space and $A$ be a $n\times n$ matrix. Then the following equivalences hold:

\emph{(i)} $A$ is a left Drazin invertible matrix.

\emph{(ii)} $A$ is a right Drazin invertible matrix.

\emph{(iii)} $A$ is a Drazin invertible matrix.
\end{corollary}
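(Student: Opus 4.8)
The plan is to reduce the corollary to two ingredients already in hand: the ascent–descent characterizations of one-sided Drazin invertibility for operators, and the general algebraic fact (Theorem \ref{2}) that simultaneous left and right Drazin invertibility forces two-sided Drazin invertibility. First I would record the simplification peculiar to finite dimensions. Since every subspace of the $n$-dimensional space $X$ is automatically topologically complemented, the topological-complementation hypotheses appearing in Theorems \ref{8}(iii) and \ref{9}(iii) are vacuous. Consequently the $n\times n$ matrix $A$ is left Drazin invertible precisely when $asc(A)<\infty$, and right Drazin invertible precisely when $dsc(A)<\infty$, as already noted just before the statement.

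For the equivalence of (i) and (ii) I would invoke the preceding Proposition, which asserts that for such a matrix $asc(A)<\infty$ holds if and only if $dsc(A)<\infty$, and that the two quantities then coincide. Combining this with the reductions above gives (i) $\Leftrightarrow$ (ii) at once.

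For (iii), the implications (iii) $\Rightarrow$ (i) and (iii) $\Rightarrow$ (ii) are immediate, since any Drazin inverse of $A$ is simultaneously a left and a right Drazin inverse, as observed before Theorem \ref{2}. For the converse I would assume (i); then by the equivalence just proved (ii) holds as well, so $A$ is both left and right Drazin invertible. The left Drazin index equals $asc(A)$ and the right Drazin index equals $dsc(A)$, and by the preceding Proposition these two integers agree, say with common value $j$. Theorem \ref{2} then applies and yields that $A$ is Drazin invertible with index $j$, establishing (i) $\Rightarrow$ (iii) and closing the cycle.

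The only point needing care — and it is a mild bookkeeping matter rather than a genuine obstacle — is the matching of indices required to invoke Theorem \ref{2}, which demands that the left and right Drazin inverses share a single index $j$. This is supplied exactly by the equality $asc(A)=dsc(A)$ from the preceding Proposition, so the hypotheses of Theorem \ref{2} are met verbatim and no further argument is needed.
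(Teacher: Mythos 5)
Your proposal is correct and follows exactly the route the paper intends: the complementation hypotheses of Theorems \ref{8}(iii) and \ref{9}(iii) are vacuous in finite dimensions, the preceding Proposition gives $asc(A)<\infty\Leftrightarrow dsc(A)<\infty$ with equality of the two, and Theorem \ref{2} (with the matching index supplied by $asc(A)=dsc(A)$) closes the loop to full Drazin invertibility. Your explicit attention to the index-matching needed for Theorem \ref{2} is a welcome refinement of a point the paper leaves implicit.
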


By Theorem \ref{2}, the left Drazin inverse of matrix $A$  coincides with the Drazin inverse of $A$. This implies that the solutions of two matrix equations
 $$ABA=BA^2,~~B^2A=B,~~BA^{j+1}=A^j$$
   and
   $$AB=BA,~~B^2A=B,~~BA^{j+1}=A^j$$
   are equal. This observation seems interesting because the solution of first matrix equation need not even commute with $A$.

\subsection{Perturbations of B-Fredholm operators}

In this section, we mainly utilize the one-sided Drazin invertible spectra to characterize the B-Fredholm spectra. Firstly, we give several equivalent statements of left and right B-Fredholm operators.

\begin{theorem} \label{C1} Let $T\in \mathcal{B}(X)$. Then

\emph{(i)} $T$ is a left B-Fredholm operator if and only if there are two closed subspaces $M$ and $N$ of $X$ reducing $T$ such that $T|_M$ is left essentially invertible and $T|_N$ is nilpotent with some order $d$. Moreover, in this case we have ind$(T)=$ind$(T|_M)$.

\emph{(ii)} $T$ is a right B-Fredholm operator if and only if there are two closed subspaces $M$ and $N$ of $X$ reducing $T$ such that $T|_M$ is right essentially invertible and $T|_N$ is nilpotent with some order $d$. Moreover, in this case we have ind$(T)=$ind$(T|_M)$.
\end{theorem}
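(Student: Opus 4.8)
The plan is to prove (i); statement (ii) then follows by the dual argument (or by passing to adjoints and invoking the adjoint characterizations analogous to Theorem \ref{3.2.11}). I want to reduce the definition of left B-Fredholm — finiteness of $d:=\operatorname{dis}(T)$ together with $\dim[\mathcal{N}(T)\cap\mathcal{R}(T^d)]<\infty$ and the topological complementation of $\mathcal{R}(T)+\mathcal{N}(T^d)$ — to the existence of a reducing pair $(M,N)$ with $T|_M$ left essentially invertible and $T|_N$ nilpotent. The natural bridge is the theory of left Drazin invertibility developed in Theorem \ref{8}, since both notions are governed by the subspaces $\mathcal{N}(T^d)$ and $\mathcal{R}(T^{d+1})$; the difference is only that ``left essentially invertible'' replaces ``left invertible'' and finite-dimensional defects replace exact ones.

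First I would establish the ``only if'' direction. Assuming $T$ is left B-Fredholm, finiteness of $\operatorname{dis}(T)=d$ gives a Kato-type decomposition: one obtains that $T|_{\mathcal{R}(T^d)}$ has the relevant structure and that $\mathcal{N}(T^d)$ carries the nilpotent part. The key reductions are the isomorphisms quoted from [\cite{Kaashoek}, Lemma 3.1], namely
$$\mathcal{N}(T^{n+1})/\mathcal{N}(T^n)\simeq\mathcal{N}(T)\cap\mathcal{R}(T^n)\quad\text{and}\quad \mathcal{R}(T^n)/\mathcal{R}(T^{n+1})\simeq X/[\mathcal{R}(T)+\mathcal{N}(T^n)].$$
Using these, the finite dimensionality of $\mathcal{N}(T)\cap\mathcal{R}(T^d)$ translates, at the level of the restriction $T|_M$ (where $M$ is the closed complement carrying the ``regular'' part), into $\dim\mathcal{N}(T|_M)<\infty$ together with closedness of its range; the complementation hypothesis on $\mathcal{R}(T)+\mathcal{N}(T^d)$ gives that $\mathcal{R}(T|_M)$ is topologically complemented in $M$. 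Thus $T|_M$ is upper semi-Fredholm with complemented range, i.e. left essentially invertible, while $T|_N$ with $N=\mathcal{N}(T^d)$ is nilpotent of order $d$.

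For the ``if'' direction I would start from a reducing pair $(M,N)$ with $T|_M$ left essentially invertible and $T|_N$ nilpotent of order $d$, write $T=T|_M\oplus T|_N$ as a diagonal operator matrix (exactly as in Theorem \ref{6}), and compute $\operatorname{dis}(T)$, $\mathcal{N}(T)\cap\mathcal{R}(T^d)$ and $\mathcal{R}(T)+\mathcal{N}(T^d)$ block by block. Because $T|_N$ is nilpotent of order $d$ and $T|_M$ has closed range with $\mathcal{N}(T|_M)\subseteq\mathcal{R}(T|_M^{\,n})$ for all $n$ modulo a finite-dimensional correction, the stable-iteration quantities stabilise at $d$, so $\operatorname{dis}(T)<\infty$; the $N$-block contributes nothing to $\mathcal{N}(T)\cap\mathcal{R}(T^d)$, so that space equals $\mathcal{N}(T|_M)\cap\mathcal{R}(T|_M^{\,d})$, which is finite-dimensional, and complementation of $\mathcal{R}(T)+\mathcal{N}(T^d)$ follows from that of $\mathcal{R}(T|_M)$ in $M$. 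The index formula $\operatorname{ind}(T)=\operatorname{ind}(T|_M)$ drops out because $T|_N$ is nilpotent, hence of index $0$, so all the index contribution comes from the $M$-block, and $\operatorname{ind}(T|_{\mathcal{R}(T^n)})=\operatorname{ind}(T|_M)$ for large $n$.

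I expect the main obstacle to be the ``if'' direction, specifically verifying that $\operatorname{dis}(T)$ is \emph{finite and equal to} $d$ rather than merely finite: one must show that the stable-iteration chains $\mathcal{N}(T)\cap\mathcal{R}(T^n)$ genuinely stabilise, and this requires controlling the interaction between the essentially-invertible block (where $T|_M$ need not be injective, only Fredholm, so the kernel chain is nontrivial but eventually constant) and the nilpotent block. Handling this cleanly is where I would spend the most care; the Kaashoek--Lay isomorphisms and the semi-regularity reserve built into left essential invertibility (via [\cite{Muller}, p.\ 160]) are the tools I would lean on, together with the observation that for $n\geq d$ the contributions of $N$ vanish so that the relevant quotients are computed entirely on $M$.
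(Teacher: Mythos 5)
Your strategy coincides with the paper's own: the forward implication is obtained from a Kato-type decomposition, the converse by computing the stable-iteration data block by block on $M\oplus N$, and the index is carried entirely by the $M$-block. Two steps, however, need more than your sketch supplies, and one of them is a genuine gap.

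First, in the forward direction the Kato decomposition is not a formal consequence of $\operatorname{dis}(T)=d<\infty$: in a general Banach space a quasi-Fredholm operator need not admit such a decomposition at all. The decomposition exists here only because the definition of left B-Fredholm already packages the complementation of $\mathcal{R}(T)+\mathcal{N}(T^d)$ and the finite-dimensionality of $\mathcal{N}(T)\cap\mathcal{R}(T^d)$, which are exactly the hypotheses of M\"{u}ller's theorem ([\cite{Muller1}, Theorem 5]) — the one substantial external input, which your plan does not identify. Once the decomposition is in hand, the paper works with the exact identifications $\mathcal{N}(T)\cap\mathcal{R}(T^d)=\mathcal{N}(T|_M)$ and $\mathcal{R}(T)+\mathcal{N}(T^d)=\mathcal{R}(T|_M)\oplus N$ (from [\cite{Minf}, Lemma 2.2]) rather than the Kaashoek--Lay quotient isomorphisms, which only control dimensions and would not by themselves deliver the topological complementation of $\mathcal{R}(T|_M)$ in $M$.

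Second, and more seriously, the index equality. You assert that $\operatorname{ind}(T|_{\mathcal{R}(T^n)})=\operatorname{ind}(T|_M)$ for large $n$ because the nilpotent block contributes index $0$; but this is precisely the statement to be proved, and it is not automatic when $T|_M$ is upper semi-Fredholm without being Fredholm, i.e.\ when $\operatorname{ind}(T|_M)=-\infty$. The paper must split into two cases: if $T|_M$ is Fredholm, then $T$ is B-Fredholm ([\cite{Muller1}, Theorem 7]) and Berkani's index-stability result ([\cite{Berkani2}, Proposition 2.1]) yields the equality; if $T|_M$ is not Fredholm, one argues separately that $T$ cannot be B-Fredholm, so that $\operatorname{ind}(T)=-\infty=\operatorname{ind}(T|_M)$ by a contradiction argument on $T|_{\mathcal{R}(T^n)}$. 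Your sketch does not address the second case, and without it the ``moreover'' clause of the theorem is unproved. (Your concern about $\operatorname{dis}(T)<\infty$ in the converse direction, by contrast, is easily resolved: for $n\geq d$ one has $\mathcal{N}(T)\cap\mathcal{R}(T^n)=\mathcal{N}(T|_M)\cap\mathcal{R}((T|_M)^n)$, a decreasing chain inside the finite-dimensional space $\mathcal{N}(T|_M)$, which therefore stabilizes; the paper simply cites [\cite{Ghorbel}, Lemma 2.1].)
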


\begin{proof} (i) Assume that $T$ is left B-Fredholm. By [\cite{Muller1}, Theorem 5], there exist closed subspaces $M$ and $N$ of $X$ reducing $T$ such that $T|_M$ is semi-regular and $T|_N$ is nilpotent with some order $d$. Thus $\mathcal{N}(T)\cap\mathcal{R}(T^d)=\mathcal{N}(T|_M)$ and $\mathcal{R}(T)+\mathcal{N}(T^d)=\mathcal{R}(T|_M)\oplus N$ by [\cite{Minf}, Lemma 2.2]. This implies dim$\mathcal{N}(T|_M)<\infty$ and $\mathcal{R}(T|_M)$ is topologically complemented in $M$ by [\cite{Minf}, Lemma 2.3]. Hence $T|_M$ is left essentially invertible.

Conversely, suppose that there are two closed subspaces $M$ and $N$ of $X$ reducing $T$ such that $T|_M$ is left essentially invertible and $T|_N$ is nilpotent with some order $d$. From [\cite{Ghorbel}, Lemma 2.1], it follows that dis$(T)=d<\infty$. If $T|_M$ is left essentially invertible, then dim$\mathcal{N}(T|_M)<\infty$ and $\mathcal{R}(T|_M)$ is topologically complemented in $M$. This implies that $\mathcal{R}(T|_M)\oplus N$ is topologically complemented in $X$ by [\cite{Minf}, Lemma 2.3]. Since $\mathcal{N}(T)\cap\mathcal{R}(T^d)=\mathcal{N}(T|_M)$ and $\mathcal{R}(T)+\mathcal{N}(T^d)=\mathcal{R}(T|_M)\oplus N$, dim$\mathcal{N}(T)\cap\mathcal{R}(T^d)<\infty$ and $\mathcal{R}(T)+\mathcal{N}(T^d)$ is topologically complemented in $X$. Hence $T$ is left B-Fredholm.

In order to prove ind$(T)=$ind$(T|_M)$, we consider the following two cases. \textbf{Case~I}: If $T_1:=T|_M$ is Fredholm, then $T$ is B-Fredhom by [\cite{Muller1}, Theorem 7]. Since $T|_{\mathcal{R}(T^d)}=T_1|_{\mathcal{R}(T_1^d)}$, [\cite{Berkani2}, Proposition 2.1] implies
$$\makebox{ind}(T)=\makebox{ind}(T|_{\mathcal{R}(T^d)})=\makebox{ind}(T_1|_{\mathcal{R}(T_1^d)})=\makebox{ind}(T_1).$$
\textbf{Case~II}: If $T_1$ is not Fredholm, then ind$(T_1)=-\infty$ and $T$ is not B-Fredholm by [\cite{Muller1}, Theorem 7]. Since $T$ is upper semi B-Fredholm, $T|_{\mathcal{R}(T^n)}$ is upper semi-Fredholm for some $n\geq d$ by [\cite{Berkani}, Proposition 2.1]. Hence ind$(T)=-\infty$. In fact, since $T$ is upper semi B-Fredholm, dim$\mathcal{N}(T|_{\mathcal{R}(T^n)})<\infty$ and so $-\infty\leq$ ind$(T)<\infty$. Suppose $-\infty<$ ind$(T)<\infty$. Then $T|_{\mathcal{R}(T^n)}$ is Fredholm
and so $T$ is B-Fredholm, which is a contradiction. Hence ind$(T)=$ind$(T_1)=-\infty$.

(ii) Suppose that $T$ is right B-Fredholm. By [\cite{Muller1}, Theorem 5], there exist closed subspaces $M$ and $N$ of $X$ reducing $T$ such that $T|_M$ is semi-regular and $T|_N$ is nilpotent with some order $d$. Thus $\mathcal{N}(T)\cap\mathcal{R}(T^d)=\mathcal{N}(T|_M)$ and $\mathcal{R}(T)+\mathcal{N}(T^d)=\mathcal{R}(T|_M)\oplus N$ by [\cite{Minf}, Lemma 2.2]. This implies that $\mathcal{N}(T|_M)$ is topologically complemented in $X$ and
$$\makebox{dim} M/\mathcal{R}(T|_M)=\makebox{dim}[M\oplus N]/[\mathcal{R}(T|_M)\oplus N]=\makebox{dim}X/[\mathcal{R}(T)+\mathcal{N}(T^d)]<\infty.$$
Hence [\cite{Minf}, Lemma 2.3] implies that $\mathcal{N}(T|_M)$ is topologically complemented in $M$, and so $T|_M$ is right essentially invertible.

Conversely, suppose that there are two closed subspaces $M$ and $N$ of $X$ reducing $T$ such that $T|_M$ is right essentially invertible and $T|_N$ is nilpotent with index $d$. From [\cite{Ghorbel}, Lemma 2.1], it follows that dis$(T)=d<\infty$. If $T|_M$ is right essentially invertible, then dim$M/\mathcal{R}(T|_M)<\infty$ and $\mathcal{N}(T|_M)$ is topologically complemented in $M$. Since $\mathcal{N}(T)\cap\mathcal{R}(T^d)=\mathcal{N}(T|_M)$ and $\mathcal{R}(T)+\mathcal{N}(T^d)=\mathcal{R}(T|_M)\oplus N$, $\mathcal{N}(T)\cap\mathcal{R}(T^d)$ is topologically complemented in $X$ and dim$X/[\mathcal{R}(T)+\mathcal{N}(T^d)]<\infty$ by [\cite{Minf}, Lemma 2.3]. Hence $T$ is right B-Fredholm.

The proof of the equality of index is dual to that of (i).
\end{proof}

\begin{theorem} \label{C6} Let $T\in \mathcal{B}(X)$. Then

 \emph{(i)} $T$ is left B-Fredholm if and only if there is an integer $d$ such that $T|_{\mathcal{R}(T^d)}$ is left essentially invertible, $\mathcal{R}(T^d)$ is closed and $\mathcal{N}(T^d)$ is topologically complemented in $X$.

 \emph{(ii)} $T$ is right B-Fredholm if and only if there is an integer $d$ such that $T|_{\mathcal{R}(T^d)}$ is right essentially invertible, $\mathcal{R}(T^d)$ is topologically complemented in $X$.
\end{theorem}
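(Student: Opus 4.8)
The plan is to prove both equivalences by bridging through Theorem~\ref{C1}, which already characterises left (resp.\ right) B-Fredholm operators by a pair of reducing subspaces, together with one auxiliary observation: \emph{if $S\in\mathcal{B}(M)$ is left (resp.\ right) essentially invertible, then for every $d$ the subspace $\mathcal{R}(S^d)$ is closed and $S|_{\mathcal{R}(S^d)}$ is again left (resp.\ right) essentially invertible.} This sublemma follows quickly from Corollary~\ref{C3.3}: since $S^d$ and $S^{d+1}$ are left essentially invertible, $\mathcal{R}(S^d)$ and $\mathcal{R}(S^{d+1})$ are closed and topologically complemented in $M$; then $\mathcal{N}(S|_{\mathcal{R}(S^d)})\subseteq\mathcal{N}(S)$ is finite-dimensional, $\mathcal{R}(S|_{\mathcal{R}(S^d)})=\mathcal{R}(S^{d+1})$ is closed, and a subspace that is complemented in $M$ and contained in the closed subspace $\mathcal{R}(S^d)$ is automatically complemented inside $\mathcal{R}(S^d)$; this yields left essential invertibility of the restriction. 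The right-hand version is dual.

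For the forward implication of (i), I would start from Theorem~\ref{C1}(i) to obtain closed subspaces $M,N$ reducing $T$ with $T_M:=T|_M$ left essentially invertible and $T_N:=T|_N$ nilpotent of order $d$. Writing $T=T_M\oplus T_N$ gives $T^d=T_M^d\oplus 0$, so $\mathcal{R}(T^d)=\mathcal{R}(T_M^d)$ and $\mathcal{N}(T^d)=\mathcal{N}(T_M^d)\oplus N$. The sublemma makes $\mathcal{R}(T^d)$ closed and $T|_{\mathcal{R}(T^d)}=T_M|_{\mathcal{R}(T_M^d)}$ left essentially invertible; the kernel $\mathcal{N}(T_M^d)$ is finite-dimensional, hence complemented in $M$, so $\mathcal{N}(T^d)$ is complemented in $X$. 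This supplies all three conditions. For the converse, the hypotheses say exactly that $\mathcal{R}(T^d)$ is closed and $T|_{\mathcal{R}(T^d)}$ is upper semi-Fredholm, i.e.\ $T$ is upper semi B-Fredholm; consequently $T$ is quasi-Fredholm and $\mathrm{dis}(T)<\infty$ by [\cite{Berkani}]. Setting $d_0:=\mathrm{dis}(T)$ and applying [\cite{Muller1}, Theorem 5] produces reducing subspaces $M,N$ with $T|_M$ semi-regular and $T|_N$ nilpotent of order $d_0$, while [\cite{Minf}, Lemma 2.2] identifies $\mathcal{N}(T|_M)=\mathcal{N}(T)\cap\mathcal{R}(T^{d_0})$ and $\mathcal{R}(T)+\mathcal{N}(T^{d_0})=\mathcal{R}(T|_M)\oplus N$. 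It then suffices to show $T|_M$ is left essentially invertible, for Theorem~\ref{C1}(i) finishes the proof: finite-dimensionality of $\mathcal{N}(T|_M)$ comes from transporting the hypothesis to level $d_0$ via the sublemma and the finite-dimensional quotients $\mathcal{N}(T^{n+1})/\mathcal{N}(T^n)\simeq\mathcal{N}(T)\cap\mathcal{R}(T^n)$ of [\cite{Kaashoek}, Lemma 3.1], and complementation of $\mathcal{R}(T|_M)$ in $M$ is recovered from the complementation hypotheses through [\cite{Minf}, Lemma 2.3].

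I expect the main obstacle to be precisely this last transfer in the converse. The definition of left B-Fredholm is phrased at the canonical level $d_0=\mathrm{dis}(T)$, whereas the hypothesis is granted at an arbitrary $d$ which, as the nilpotent Jordan block of size two shows, may be strictly smaller than $\mathrm{dis}(T)$. I must therefore move all three conditions — closedness of the range, finite-dimensionality of the kernel intersection, and the topological complementation — from level $d$ to level $d_0$ (the kernels differing only by finite-dimensional pieces and the ranges being nested), and then match them against the semi-regular/nilpotent decomposition using [\cite{Minf}, Lemmas 2.2 and 2.3] and [\cite{Kaashoek}, Lemma 3.1].

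Part (ii) is entirely dual: I would invoke Theorem~\ref{C1}(ii) and the right-hand form of the sublemma, replace ``finite-dimensional kernel and complemented $\mathcal{N}(T^d)$'' by ``finite-codimensional range and complemented $\mathcal{R}(T^d)$'' (so the complemented range of a lower semi-Fredholm restriction enters where the finite-dimensional kernel did), and use lower semi B-Fredholm in place of upper semi B-Fredholm. The same complementation-transfer between level $d$ and $\mathrm{dis}(T)$, now carried out for $\mathcal{N}(T|_M)$ rather than $\mathcal{R}(T|_M)$, is the crux there as well.
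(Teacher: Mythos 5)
Your forward implications in (i) and (ii) are essentially the paper's own argument: decompose via Theorem \ref{C1}, apply Corollary \ref{C3.3} to the left (resp.\ right) essentially invertible summand, and carry the closedness and complementation across the direct sum $T^d=(T|_M)^d\oplus 0$. That half is sound.

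The converse is where the proposal has a genuine gap. You correctly observe that the definition of left B-Fredholm is stated at the level $d_0=\mathrm{dis}(T)$ while the hypothesis is given at an arbitrary $d$, and that the entire difficulty is to convert ``$\mathcal{R}(T^{d+1})$ is complemented in $\mathcal{R}(T^{d})$ and $\mathcal{N}(T^{d})$ is complemented in $X$'' into ``$\mathcal{R}(T)+\mathcal{N}(T^{d_0})$ is complemented in $X$'' (equivalently, into complementation of $\mathcal{R}(T|_M)$ in $M$ for the decomposition of [\cite{Muller1}, Theorem 5]). But you only announce that this transfer ``must'' be carried out via [\cite{Minf}, Lemmas 2.2 and 2.3] and [\cite{Kaashoek}, Lemma 3.1], without giving the argument; those lemmas do not by themselves produce the required complement. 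The quotient $M/\mathcal{R}(T|_M)$ need not be finite dimensional in the left B-Fredholm setting, so no finite-dimensional perturbation trick applies, and an algebraic isomorphism $M/\mathcal{R}(T|_M)\simeq\mathcal{R}(T^{d_0})/\mathcal{R}(T^{d_0+1})$ does not transport topological complements. The paper avoids this route entirely: after noting that the hypotheses already make $T$ upper semi B-Fredholm, it reduces via [\cite{Berkani}, Proposition 2.5] to showing that $\mathcal{R}(T)+\mathcal{N}(T^{d})$ is topologically complemented, writes $\mathcal{R}(T^d)=\mathcal{R}(T^{d+1})\oplus G_1$ and $X=\mathcal{N}(T^d)\oplus G_2$, and exhibits the explicit complement $G_3:=G_2\cap T^{-d}(G_1)$, checking $X=[\mathcal{R}(T)+\mathcal{N}(T^{d})]\oplus G_3$ by the modular law [\cite{Muller}, p.\ 197, Lemma III.22.2] together with the identity $\mathcal{R}(T)+\mathcal{N}(T^{d})=T^{-d}(\mathcal{R}(T^{d+1}))$. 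Some such explicit construction (or a complete complementation-transfer argument) is required to close your converse; as written, the crux of the theorem is asserted rather than proved, and the same criticism applies verbatim to the dual part (ii).
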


\begin{proof} (i) Suppose that $T$ is left B-Fredholm. Then $T$ is upper semi B-Fredhom, i.e. there exists an integer $d_1$ such that $\mathcal{R}(T^{d_1})$ is closed and $T|_{\mathcal{R}(T^{d_1})}$ is upper semi Fredholm. By Theorem \ref{C1}, there are two subspaces $M$ and $N$ of $X$ reducing $T$ such that $T|_M$ is left essentially invertible and $T|_N$ is nilpotent with order $d_2$. By [\cite{Berkani}, Proposition 2.1], we may assume $d_1=d_2$ and put $d:=d_1=d_2$. Then $(T|_M)^d$ and $(T|_M)^{d+1}$ are left essentially invertible by Corollary \ref{C3.3}. Since $T^d=(T|_M)^d\oplus 0$, $\mathcal{N}(T^d)$ is topologically complemented in $X$ by [\cite{Minf}, Lemma 2.3]. If $T^{d+1}=(T|_M)^{d+1}\oplus 0$ and $\mathcal{R}(T|_M)^{d+1}$ is topologically complemented in $M$, then $\mathcal{R}(T^{d+1})=\mathcal{R}(T|_M)^{d+1}$ is topologically complemented in $X$ by [\cite{Minf}, Lemma 2.3] again. Let $N$ be the topologically complemented subspace of $\mathcal{R}(T^{d+1})$ in $X$, then
$$\mathcal{R}(T^d)=\mathcal{R}(T^d)\cap[N\oplus \mathcal{R}(T^{d+1})]=[\mathcal{R}(T^d)\cap N]\oplus \mathcal{R}(T^{d+1}),$$
 by [\cite{Muller}, p.197, Lemma III.22.2]. Hence $\mathcal{R}(T|_{\mathcal{R}(T^d)})=\mathcal{R}(T^{d+1})$ is topologically complemented in $\mathcal{R}(T^d)$ and so $T|_{\mathcal{R}(T^d)}$ is left essentially invertible.

Conversely, if there is an integer $d$ such that $T|_{\mathcal{R}(T^d)}$ is left essentially invertible and $\mathcal{R}(T^d)$ is closed, then $T$ is upper semi B-Fredholm. From [\cite{Berkani}, Proposition 2.5],
 it is sufficient to prove that $\mathcal{R}(T)+\mathcal{N}(T^d)$ is topologically complemented in $X$. Since $\mathcal{R}(T^{d+1})=\mathcal{R}(T|_{\mathcal{R}(T^d)})$ and $\mathcal{N}(T^d)$ are topologically complemented in $\mathcal{R}(T^d)$ and $X$ respectively, there exist two closed subspaces $G_1$ and $G_2$ such that $\mathcal{R}(T^d)=\mathcal{R}(T^{d+1})\oplus G_1$ and $X=\mathcal{N}(T^d)\oplus G_2$. Put $G_3=G_2\cap T^{-d}(G_1)$. By [\cite{Muller}, p.197, Lemma III.22.2],
 $$
\begin{aligned}
\left[\mathcal{R}(T)+\mathcal{N}(T^{d})\right]+G_3&=\mathcal{R}(T)+\mathcal{N}(T^{d})+G_2\cap T^{-d}(G_1)\\
&=\mathcal{R}(T)+[\mathcal{N}(T^{d})+G_2]\cap T^{-d}(G_1)\\
&=\mathcal{R}(T)+T^{-d}(G_1)\\
&=\mathcal{R}(T)+\mathcal{N}(T^d)+T^{-d}(G_1)\\
&=T^{-d}[\mathcal{R}(T^{d+1})]+T^{-d}(G_1)\\
&=T^{-d}(\mathcal{R}(T^d))\\
&=X
\end{aligned}
$$
and
 $$
\begin{aligned} \left[\mathcal{R}(T)+\mathcal{N}(T^{d})\right]\cap G_3&=[\mathcal{R}(T)+\mathcal{N}(T^{d})]\cap T^{-d}(G_1)\cap G_2\\
&=[T^{-d}(\mathcal{R}(T^{d+1}))]\cap T^{-d}(G_1)\cap G_2\\
&=\mathcal{N}(T^d)\cap G_2\\
&=\{0\}.
\end{aligned}
$$
This implies that $\mathcal{N}(T)+\mathcal{R}(T^d)$ is topologically complemented in $X$.

(ii) Suppose that $T$ is right B-Fredholm. Then $T$ is lower semi B-Fredhom, i.e. there exists an integer $d_1$ such that $\mathcal{R}(T^{d_1})$ is closed and $T|_{\mathcal{R}(T^{d_1})}$ is lower semi Fredholm. By Theorem \ref{C1}, there are two subspaces $M$ and $N$ of $X$ reducing $T$ such that $T|_M$ is right essentially invertible and $T|_N$ is nilpotent with index $d_2$. By [\cite{Berkani}, Proposition 2.1], we may assume $d_1=d_2$ and put $d:=d_1=d_2$. From Corollary \ref{C3.3}, it follows that $T^d=(T|_M)^d\oplus 0$ is right essentially invertible. Thus $\mathcal{R}(T^d)=M$ is topologically complemented in $X$ because $(T|_M)^d$ is surjective. Since $T|_M$ is right essentially invertible, $\mathcal{N}(T|_{\mathcal{R}(T^d)})=\mathcal{N}(T|_{M})$ is topologically complemented in $\mathcal{R}(T^d)$. This ensures that $T|_{\mathcal{R}(T^d)}$ is right essential invertible.

Conversely, if there is an integer $d$ such that $T|_{\mathcal{R}(T^d)}$ is right essential invertible and $\mathcal{R}(T^d)$ is closed, then $T$ is lower semi B-Fredholm. From [\cite{Berkani}, Proposition 2.5],
 it is sufficient to prove that $\mathcal{N}(T)\cap\mathcal{R}(T^d)$ is topologically complemented in $X$. Since $T|_{\mathcal{R}(T^d)}$ is right essentially invertible, $\mathcal{N}(T)\cap\mathcal{R}(T^d)=\mathcal{N}(T|_{\mathcal{R}(T^d)})$ is topologically complemented in $\mathcal{R}(T^d)$. Thus $\mathcal{N}(T)\cap\mathcal{R}(T^d)$ is topologically complemented in $X$ because $\mathcal{R}(T^d)$ is topologically complemented in $X$.
\end{proof}

\begin{proposition} \label{C7} Let $T\in \mathcal{B}(X)$.

\emph{(i)} If $T$ is left B-Fredholm, then $T|_{\mathcal{R}(T^n)}$ is left essentially invertible, $\mathcal{R}(T^n)$ is closed and $\mathcal{N}(T^n)$ is topologically complemented in $X$ for each $n\geq$ dis$(T)$.

\emph{(ii)} If $T$ is right B-Fredholm, then $T|_{\mathcal{R}(T^n)}$ is right essentially invertible, $\mathcal{R}(T^n)$ is topologically complemented in $X$ for each $n\geq$ dis$(T)$.
\end{proposition}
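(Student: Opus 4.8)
The plan is to reduce the whole statement to the decomposition supplied by Theorem \ref{C1} and then to re-run the argument of Theorem \ref{C6}, observing that the only role played there by the particular integer $d$ was to force the nilpotent part to vanish; any exponent $n \geq \text{dis}(T)$ serves this purpose equally well, so the content is really the \emph{uniformity} in $n$.

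First I would invoke Theorem \ref{C1}(i): since $T$ is left B-Fredholm there are closed subspaces $M, N$ reducing $T$ with $T = T|_M \oplus T|_N$, where $T|_M$ is left essentially invertible and $T|_N$ is nilpotent of some order $d$. As already recorded in the proof of Theorem \ref{C1} (via [\cite{Ghorbel}, Lemma 2.1]), this forces $\text{dis}(T) = d$. Now fix any $n \geq \text{dis}(T) = d$. Because $(T|_N)^n = 0$, the $n$-th power splits as $T^n = (T|_M)^n \oplus 0$, so that $\mathcal{R}(T^n) = \mathcal{R}((T|_M)^n)$ and $\mathcal{N}(T^n) = \mathcal{N}((T|_M)^n) \oplus N$. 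This power-splitting is the single place where $n \geq d$ is used.

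Next, by Corollary \ref{C3.3} both $(T|_M)^n$ and $(T|_M)^{n+1}$ are again left essentially invertible. In particular $\mathcal{R}((T|_M)^n)$ is closed in $M$, hence $\mathcal{R}(T^n)$ is closed in $X$; and since $\dim \mathcal{N}((T|_M)^n) < \infty$ while $N$ is a closed complemented summand, [\cite{Minf}, Lemma 2.3] applied to $T^n = (T|_M)^n \oplus 0$ gives that $\mathcal{N}(T^n)$ is topologically complemented in $X$. For the left essential invertibility of $T|_{\mathcal{R}(T^n)}$ I would repeat the splitting computation of Theorem \ref{C6}: $\mathcal{R}(T^{n+1}) = \mathcal{R}((T|_M)^{n+1})$ is complemented in $X$ (again by Lemma 2.3), so writing $N'$ for a complement of $\mathcal{R}(T^{n+1})$ and applying [\cite{Muller}, Lemma III.22.2] one obtains $\mathcal{R}(T^n) = [\mathcal{R}(T^n) \cap N'] \oplus \mathcal{R}(T^{n+1})$, whence $\mathcal{R}(T|_{\mathcal{R}(T^n)}) = \mathcal{R}(T^{n+1})$ is complemented in $\mathcal{R}(T^n)$. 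As $\mathcal{N}(T|_{\mathcal{R}(T^n)}) = \mathcal{N}(T) \cap \mathcal{R}((T|_M)^n) \subseteq \mathcal{N}(T|_M)$ is finite-dimensional, $T|_{\mathcal{R}(T^n)}$ is left essentially invertible.

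Part (ii) follows by the dual argument, using Theorem \ref{C1}(ii), Corollary \ref{C3.3}, and the right essential invertibility bookkeeping of Theorem \ref{C6}(ii); there one shows $\mathcal{R}(T^n) = M$ is complemented in $X$ and $\mathcal{N}(T|_{\mathcal{R}(T^n)})$ is complemented in $\mathcal{R}(T^n)$. The only genuine obstacle is organizational rather than conceptual: one must verify that every ingredient of Theorem \ref{C6}'s proof --- the splitting $T^n = (T|_M)^n \oplus 0$, the stability of one-sided essential invertibility under taking powers, and the modular-law complementation identity --- holds uniformly for all $n \geq \text{dis}(T)$ and not merely for the single exponent $d$. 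No new idea beyond Theorem \ref{C6} is required.
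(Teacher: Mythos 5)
Your proposal is correct and takes essentially the same route as the paper: the paper's own proof of Proposition \ref{C7} is a one-line deferral to [Berkani, Theorem 2.6] and ``the similar proofs of Theorem \ref{C6}'', and your write-up is exactly that re-run, with the correct observation that the only place $d$ entered was in killing the nilpotent summand, which any $n\geq \mathrm{dis}(T)$ does equally well. The only caveat is in part (ii), where the identification $\mathcal{R}(T^n)=M$ (inherited from the paper's own proof of Theorem \ref{C6}(ii), which asserts $(T|_M)^d$ is surjective) is not literally valid for a merely right essentially invertible $T|_M$; one should instead argue that $\mathcal{R}((T|_M)^n)$ is closed of finite codimension in $M$, hence complemented there and therefore in $X$.
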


\begin{proof} It follows from [\cite{Berkani}, Theorem 2.6] and the similar proofs of Theorem \ref{C6}.
\end{proof}

For subspaces $L_1$, $L_2$ of Banach space $X$, we write $L_1\mathop{\subset}\limits^{e}L_2$ if there is a finite-dimensional subspace $M\subset X$ such that $L_1\subset L_2+M$. Similarly, we write $L_1\mathop{=}\limits^e L_2$ if both $L_1\mathop{\subset}\limits^{e}L_2$ and $L_2\mathop{\subset}\limits^{e}L_1$. If $L_1\mathop{=}\limits^e L_2$, then there exist two finite-dimensional subspaces $M_1$ and $M_2$ such that $L_1+M_1=L_2+M_2$. In fact, if $L_1\mathop{=}\limits^e L_2$, then there exist two finite-dimensional subspaces $M_0$ and $M_2$ such that $L_1\subset L_2+M_2\subset L_1+M_0+M_2$. By [\cite{Muller}, p. 197, Lemma III.22.2],
$$
L_2+M_2=(L_1+M_0+M_2)\cap (L_2+M_2)=L_1+(L_2+M_2)\cap(M_0+M_2).
$$
Put $M_1:=(L_2+M_2)\cap(M_0+M_2)$. Then $M_1$ is a finite-dimensional subspace of $X$ and $L_2+M_2=L_1+M_1$.

\begin{lemma} \label{L3.18} Let $X$ be Banach space and $L_1, L_2, L_3, L_4$ be closed subspaces of $X$. Suppose that $L_1\subset L_3$, $L_2\subset L_4$, $L_1\mathop{=}\limits^e L_2$ and $L_3\mathop{=}\limits^e L_4$, then $L_1$ is topologically complemented in $L_3$ if and only if $L_2$ is topologically complemented in $L_4$.
\end{lemma}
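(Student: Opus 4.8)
The plan is to reduce the statement to two elementary ``finite-dimensional perturbation'' principles for topological complementation, and then to chain them together. First I would pass to the closed subspaces $P:=L_1+L_2$ and $Q:=L_3+L_4$. Since $L_1\mathop{=}\limits^e L_2$, the discussion preceding the lemma gives finite-dimensional $M_1,M_2$ with $L_1+M_1=L_2+M_2$, whence $L_1\subset L_1+L_2\subset L_1+M_1$; as $\dim(L_1+M_1)/L_1<\infty$, the intermediate space $P$ satisfies $\dim P/L_1<\infty$ and, symmetrically, $\dim P/L_2<\infty$, so $P$ is closed, being a finite-dimensional extension of the closed subspace $L_1$. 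The same reasoning shows $Q$ is closed with $\dim Q/L_3<\infty$ and $\dim Q/L_4<\infty$. Moreover $L_1\subset L_3\subset Q$ and $L_2\subset L_4\subset Q$ force $P\subset Q$, so I obtain the chain of inclusions $L_1\subset P\subset Q$, $L_2\subset P\subset Q$, $L_1\subset L_3\subset Q$ and $L_2\subset L_4\subset Q$, all with finite codimensions.

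Next I would record the two perturbation principles, both proved from the modular law [\cite{Muller}, p.\,197, Lemma III.22.2] together with the standard facts that a finite-dimensional subspace and a closed subspace of finite codimension are always topologically complemented, that the sum of a closed subspace and a finite-dimensional subspace is closed, and that an algebraic direct sum of two closed subspaces is automatically topological (by the closed graph theorem). \textbf{Principle 1.} If $A\subset A'\subset B$ are closed with $\dim A'/A<\infty$, then $A$ is complemented in $B$ if and only if $A'$ is complemented in $B$. For ``$\Rightarrow$'', writing $B=A\oplus C$ the modular law gives $A'=A\oplus(A'\cap C)$ with $A'\cap C$ finite-dimensional, and splitting $C=(A'\cap C)\oplus C'$ yields $B=A'\oplus C'$; for ``$\Leftarrow$'', writing $B=A'\oplus D$ and $A'=A\oplus E$ with $E$ finite-dimensional gives $B=A\oplus(E+D)$. \textbf{Principle 2.} If $A\subset B\subset B'$ are closed with $\dim B'/B<\infty$, then $A$ is complemented in $B$ if and only if $A$ is complemented in $B'$. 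For ``$\Rightarrow$'', writing $B=A\oplus C$ and $B'=B\oplus G$ with $G$ finite-dimensional gives $B'=A\oplus(C+G)$; for ``$\Leftarrow$'', writing $B'=A\oplus C'$ the modular law gives $B=A\oplus(B\cap C')$.

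Finally I would chain these principles along the inclusions established above:
\begin{gather*}
L_1\text{ compl. in }L_3 \Longleftrightarrow L_1\text{ compl. in }Q \Longleftrightarrow P\text{ compl. in }Q\\
\Longleftrightarrow L_2\text{ compl. in }Q \Longleftrightarrow L_2\text{ compl. in }L_4,
\end{gather*}
where the first and last equivalences use Principle 2 (enlarging $L_3$, respectively $L_4$, to $Q$) and the middle two use Principle 1 (enlarging $L_1$, respectively $L_2$, to $P$ inside $Q$). This yields the desired equivalence. The main obstacle is not the chaining, which is purely formal, but the careful verification of Principles 1 and 2: one must keep track of which intersections are finite-dimensional, repeatedly invoke the modular law to split the spaces along the given inclusions, and check at each step that the algebraic complements produced are genuinely closed so that the resulting direct sums are topological.
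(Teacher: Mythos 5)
Your proof is correct and follows essentially the same strategy as the paper's: both arguments reduce the lemma to the stability of topological complementation under finite-dimensional enlargement of the subspace and of the ambient space, applied along a chain of intermediate spaces. The only real differences are organizational — you use the symmetric intermediate spaces $P=L_1+L_2$ and $Q=L_3+L_4$ where the paper uses $L_1+M_1=L_2+M_2$ and $L_3+M_1+M_3=L_4+M_1+M_4$, and you prove the two perturbation principles directly from the modular law where the paper cites [\cite{Minf}, Lemma 2.3] and [\cite{Muller}, p. 398, Theorem A.1.25]; your packaging also yields both implications of the equivalence in one pass.
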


\begin{proof} Suppose that $L_1$ is topologically complemented in $L_3$. If $L_1\mathop{=}\limits^e L_2$ and $L_3\mathop{=}\limits^e L_4$, then there are finite-dimensional subspaces $M_1, M_2, M_3$ and $M_4$ such that $L_1+M_1=L_2+M_2$ and $L_3+M_3=L_4+M_4$. Since $M_1+M_3$ is a finite-dimensional subspace, there is a closed subspace $N\subset M_1+M_3$ such that $[L_3\cap(M_1+M_3)]\oplus N=M_1+M_3$. Then
 $$L_3+M_1+M_3=L_3+[L_3\cap(M_1+M_3)]\oplus N=L_3\oplus N.$$
By [\cite{Minf}, Lemma 2.3], $L_1$ is topologically complemented in $L_3\oplus N=L_3+M_1+M_3$ because $L_1$ is topologically complemented in $L_3$. Then [\cite{Muller}, p. 398, Theorem A.1.25] implies that $L_1+M_1$ is topologically complemented in $L_3+M_1+M_3$. Thus $L_2+M_2$ is topologically complemented in $L_4+M_1+M_4$. By [\cite{Muller}, p. 398, Theorem A.1.25] again, $L_2$ is topologically complemented in $L_4+M_1+M_4$. Then there exists a closed subspace $N_0\subset M_1+M_4$ such that $L_4+M_1+M_4=L_4\oplus N_0$. Hence $L_2$ is topologically complemented in $L_4$ by [\cite{Minf}, Lemma 2.3] again.
\end{proof}

\begin{remark} \label{R8} \emph{The $\mathcal{F}(X)$ is the ideal of all finite rank operators in $\mathcal{B}(X)$. By observation 8 below Table 1 in [\cite{Mbekhta1}],
 $$
\begin{aligned} (T+F)^n-T^n&=\sum\limits_{i=0}^{n-1}[T^i(T+F)^{n-i}-T^{i+1}(T+F)^{n-i-1}]\\
&=\sum\limits_{i=0}^{n-1}T^iF(T+F)^{n-i-1},
\end{aligned}
$$
where $T\in \mathcal{B}(X)$ and $F\in\mathcal{F}(X)$. That is, there exsits a $F_1\in \mathcal{F}(X)$ such that $(T+F)^n=T^n+F_1$. Thus $\mathcal{R}(T+F)^n\mathop{=}\limits^e\mathcal{R}(T)^n$.
}
\end{remark}

When the reader notices [\cite{Aiena1}, p. 133, Remark 3.38] and [\cite{Muller}, p. 197, Lemma III.22.2], the following lemma is an easy exercise. It will be used in the proof of Theorem \ref{C9}.

\begin{lemma} \label{C8} Let $X,~Y$ be Banach spaces. If $F\in \mathcal{F}(X, Y)$ is a finite rank operator, then there is a finite-dimensional subspace $M$ of $X$ such that $X=\mathcal{N}(F)\oplus M$.
\end{lemma}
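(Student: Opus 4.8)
The plan is to construct $M$ explicitly from a basis of the range and then check that the resulting algebraic decomposition is in fact topological. First I would record the two elementary facts on which everything rests: since $F\in\mathcal{F}(X,Y)$ is bounded, its kernel $\mathcal{N}(F)$ is a \emph{closed} subspace of $X$; and since $F$ has finite rank, $\mathcal{R}(F)$ is finite-dimensional, say $\dim\mathcal{R}(F)=k$. These are precisely the inputs packaged in [\cite{Aiena1}, p. 133, Remark 3.38].

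Next I would fix a basis $y_1,\dots,y_k$ of $\mathcal{R}(F)$ and choose preimages $x_1,\dots,x_k\in X$ with $Fx_i=y_i$ for each $i$. Setting $M:=\mathrm{span}\{x_1,\dots,x_k\}$, I would verify the algebraic splitting $X=\mathcal{N}(F)\oplus M$ in two steps. For $M\cap\mathcal{N}(F)=\{0\}$: any $x=\sum_{i}c_ix_i\in\mathcal{N}(F)$ satisfies $0=Fx=\sum_i c_i y_i$, so the linear independence of the $y_i$ forces all $c_i=0$ and hence $x=0$. For $X=\mathcal{N}(F)+M$: given $x\in X$, expand $Fx=\sum_i c_i y_i$ in the chosen basis; then $m:=\sum_i c_i x_i\in M$ satisfies $F(x-m)=0$, so that $x=(x-m)+m$ with $x-m\in\mathcal{N}(F)$.

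Finally I would upgrade this algebraic decomposition to a topological one, which is the only non-formal point in the argument. Because $M$ is finite-dimensional it is automatically closed, and a finite-dimensional algebraic complement of a closed subspace is always a topological complement; the complementation manipulations of [\cite{Muller}, p. 197, Lemma III.22.2] make this explicit. Since $M$ has dimension $k=\dim\mathcal{R}(F)<\infty$, this yields the required finite-dimensional subspace $M$ with $X=\mathcal{N}(F)\oplus M$, completing the proof. I expect no genuine obstacle here beyond keeping the topological-versus-algebraic distinction straight, which is exactly why the authors call it an easy exercise once the two cited facts are in hand.
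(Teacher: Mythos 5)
Your proposal is correct and is exactly the argument the paper intends: the paper gives no written proof, merely remarking that the lemma is an easy exercise from the closedness of $\mathcal{N}(F)$, the finite-dimensionality of $\mathcal{R}(F)$, and standard complementation facts, which are precisely the ingredients you assemble. The explicit construction of $M$ by lifting a basis of $\mathcal{R}(F)$ and the observation that a finite-dimensional algebraic complement of a closed subspace is automatically topological are both sound.
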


\begin{theorem} \label{C9}Let $T\in \mathcal{B}(X)$.

\emph{(i)} If $T$ is left B-Fredholm and $F\in \mathcal{F}(X)$ is a finite rank operator, then $T+F$ is also left B-Fredholm.

\emph{(ii)} If $T$ is right B-Fredholm and $F\in \mathcal{F}(X)$ is a finite rank operator, then $T+F$ is also right B-Fredholm.

Moreover, in these two cases ind$(T)=$ind$(T+F)$.
\end{theorem}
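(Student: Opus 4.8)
The plan is to verify the characterization of Theorem \ref{C6}(i) for $T+F$ directly at one sufficiently large power, transporting the three defining conditions from $T$ to $T+F$ through finite-rank and finite-dimensional corrections; I would prove (i) in full and obtain (ii) by the dual argument, replacing ``left essentially invertible'' by ``right essentially invertible'' and Theorem \ref{C6}(i) by Theorem \ref{C6}(ii) throughout (the only genuinely extra point in the dual case being the intersection space discussed below). Fix $n\geq \text{dis}(T)$, so that by Proposition \ref{C7}(i) the operator $T|_{\mathcal{R}(T^n)}$ is left essentially invertible, $\mathcal{R}(T^n)$ is closed and $\mathcal{N}(T^n)$ is topologically complemented in $X$; in particular $\mathcal{R}(T^{n+1})=\mathcal{R}(T|_{\mathcal{R}(T^n)})$ is closed and topologically complemented in $\mathcal{R}(T^n)$.

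The first step reduces everything to the finite-codimensional subspace on which $T$ and $T+F$ have identical powers. By Remark \ref{R8} there is a finite rank $F_1$ with $(T+F)^n=T^n+F_1$, and by Lemma \ref{C8} we may write $X=\mathcal{N}(F_1)\oplus E$ with $\dim E<\infty$. Since $T^n$ and $(T+F)^n$ coincide on the finite-codimensional subspace $\mathcal{N}(F_1)$, standard facts about closed finite-codimensional subspaces (a subspace containing a closed finite-codimensional one is itself closed, and a closed-range operator carries such subspaces to closed subspaces) show that $T^n(\mathcal{N}(F_1))=(T+F)^n(\mathcal{N}(F_1))$ is closed and finite-codimensional in both $\mathcal{R}(T^n)$ and $\mathcal{R}((T+F)^n)$; the same for the $(n+1)$-st powers. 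Hence $\mathcal{R}((T+F)^n)$ and $\mathcal{R}((T+F)^{n+1})$ are closed, $(T+F)|_{\mathcal{R}((T+F)^n)}$ is upper semi-Fredholm (it agrees with $T|_{\mathcal{R}(T^n)}$ off a finite-dimensional subspace), and
$$\mathcal{R}((T+F)^n)\mathop{=}\limits^e \mathcal{R}(T^n),\quad \mathcal{R}((T+F)^{n+1})\mathop{=}\limits^e \mathcal{R}(T^{n+1}),\quad \mathcal{N}((T+F)^n)\mathop{=}\limits^e \mathcal{N}(T^n),$$
the last coming from the same finite-rank argument applied to $T^n$ and $T^n+F_1$, whose kernels coincide on $\mathcal{N}(F_1)$.

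The second step transports the complementation conditions by Lemma \ref{L3.18}. Applied with $(L_1,L_3,L_2,L_4)=(\mathcal{N}(T^n),X,\mathcal{N}((T+F)^n),X)$ it turns complementedness of $\mathcal{N}(T^n)$ in $X$ into that of $\mathcal{N}((T+F)^n)$ in $X$; applied with $(L_1,L_3,L_2,L_4)=(\mathcal{R}(T^{n+1}),\mathcal{R}(T^n),\mathcal{R}((T+F)^{n+1}),\mathcal{R}((T+F)^n))$ it turns complementedness of $\mathcal{R}(T^{n+1})$ in $\mathcal{R}(T^n)$ into that of $\mathcal{R}((T+F)^{n+1})=\mathcal{R}\big((T+F)|_{\mathcal{R}((T+F)^n)}\big)$ in $\mathcal{R}((T+F)^n)$. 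Combined with the upper semi-Fredholmness from the first step, this makes $(T+F)|_{\mathcal{R}((T+F)^n)}$ left essentially invertible, so $T+F$ satisfies the three conditions of Theorem \ref{C6}(i) and is left B-Fredholm. For the index, $(T+F)|_{\mathcal{R}((T+F)^n)}$ is a finite-rank perturbation of $T|_{\mathcal{R}(T^n)}$ up to the finite-dimensional discrepancy $E$, so their (semi-)Fredholm indices coincide by stability of the Fredholm index; since $\text{ind}(T)=\text{ind}(T|_{\mathcal{R}(T^n)})$ and $\text{ind}(T+F)=\text{ind}((T+F)|_{\mathcal{R}((T+F)^n)})$ by definition (cf.\ Theorem \ref{C1} and [\cite{Berkani}]), we get $\text{ind}(T)=\text{ind}(T+F)$.

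The main obstacle I expect is the bookkeeping of the essential equalities together with the two complementation transfers: one must guarantee that every subspace fed into Lemma \ref{L3.18} is genuinely closed, and, in the dual (right) case, that the intersection-type space $\mathcal{N}(T+F)\cap\mathcal{R}((T+F)^n)$ is essentially equal to $\mathcal{N}(T)\cap\mathcal{R}(T^n)$, which is more delicate than the pure range or kernel equalities since essential equality is not automatically preserved under intersection. The device that resolves all of these is the coincidence of $T$ and $T+F$ on the finite-codimensional subspace $\mathcal{N}(F_1)$ supplied by Lemma \ref{C8}: restricting to $\mathcal{N}(F_1)$ makes the ranges, kernels, and their intersections agree up to the finite-dimensional space $E$, so the whole argument rests on exploiting this finite-codimensional agreement rather than on any analytic estimate.
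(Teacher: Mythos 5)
Your core argument --- transporting the two complementation conditions of Theorem \ref{C6}(i) from $T$ to $T+F$ via Remark \ref{R8}, Lemma \ref{C8} and Lemma \ref{L3.18} --- is exactly the paper's strategy, and your variant for the kernel (establishing $\mathcal{N}((T+F)^n)\mathop{=}\limits^{e}\mathcal{N}(T^n)$ from the coincidence of $T^n$ and $(T+F)^n$ on the finite-codimensional subspace $\mathcal{N}(F_1)$, then applying Lemma \ref{L3.18} with $L_3=L_4=X$) is a legitimate and arguably cleaner alternative to the paper's two-step argument with $M_1$, $M_2$ and [M\"uller, Theorem A.1.25]. There are, however, two places where your proposal replaces a real external input by a gesture.

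First, the semi-Fredholm data. The paper does not rederive closedness of $\mathcal{R}((T+F)^n)$ and upper semi-Fredholmness of $(T+F)|_{\mathcal{R}((T+F)^n)}$ from the finite-codimensional coincidence; it invokes Berkani's stability theorem (Proposition 2.7: upper semi B-Fredholmness is preserved under finite-rank perturbation) together with his Proposition 2.1, and accordingly takes $n\geq\max\{\mathrm{dis}(T),\mathrm{dis}(T+F)\}$ rather than just $n\geq\mathrm{dis}(T)$. Your phrase ``it agrees with $T|_{\mathcal{R}(T^n)}$ off a finite-dimensional subspace'' conceals the needed work: the two restrictions live on different spaces, and you must first show that $V:=\mathcal{N}(F)\cap\mathcal{R}(T^n)\cap\mathcal{R}((T+F)^n)$ is closed and of finite codimension in both ranges (which does follow from $\mathcal{R}(T^n)\mathop{=}\limits^{e}\mathcal{R}((T+F)^n)$), and then run the transfer of finite-dimensionality of the kernel and closedness of the range through $V$. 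This can be done, but as written it is an assertion, not a proof, and it is precisely the point the paper outsources to Berkani.

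Second, and more seriously, the index. ``Stability of the Fredholm index under finite-rank perturbations'' does not apply verbatim to $T|_{\mathcal{R}(T^n)}$ and $(T+F)|_{\mathcal{R}((T+F)^n)}$, because these operators have different domains and different codomains (each maps its own range into itself); comparing their indices through the common subspace $V$ forces you to track the two codimensions $\dim\mathcal{R}(T^n)/V$ and $\dim\mathcal{R}((T+F)^n)/V$, which need not be equal, and you must also handle the case $\mathrm{ind}(T)=-\infty$. The paper avoids all of this by a genuinely different device: since a left B-Fredholm operator has topological uniform descent, [Berkani, Corollary 3.2] produces $\lambda=\tfrac1n$ small with $T-\lambda I$ and $T+F-\lambda I$ honestly upper semi-Fredholm and of the same indices as $T$ and $T+F$ respectively, after which classical finite-rank index stability applies on all of $X$. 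You should either adopt that reduction or supply the missing bookkeeping lemma for operators agreeing on a common closed finite-codimensional subspace; as it stands the index claim is the one genuine gap in your proposal.
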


\begin{proof} (i) Suppose that $T$ is left B-Fredholm, then $T$ is upper semi B-Fredholm. By [\cite{Berkani}, Proposition 2.7], $T+F$ is also upper semi B-Fredholm. Let $n\geq$ max$\{$dis$(T)$, dis$(T+F)$\}. Then $(T+F)|_{\mathcal{R}(T+F)^n}$ is upper semi Fredholm and $\mathcal{R}(T+F)^n$ is closed by [\cite{Berkani}, Proposition 2.1]. From Theorem \ref{C6}, it is sufficient to prove that $\mathcal{R}(T+F)^{n+1}$ and $\mathcal{N}(T+F)^n$ are topologically complemented in $\mathcal{R}(T+F)^{n}$ and $X$. By Remark \ref{R8}, we have $\mathcal{R}(T^{n+1})\mathop{=}\limits^{e}\mathcal{R}(T+F)^{n+1}$ and $\mathcal{R}(T^{n})\mathop{=}\limits^{e}\mathcal{R}(T+F)^{n}$. Since $\mathcal{R}(T^{n+1})$ is topologically complemented in $\mathcal{R}(T^n)$, Lemma \ref{L3.18} implies that $\mathcal{R}(T+F)^{n+1}$ is topologically complemented in $\mathcal{R}(T+F)^n$.

By Remark \ref{R8}, there is a finite-rank operator $F_1$ such that $(T+F)^n=T^n+F_1$. For arbitrary $x\in \mathcal{N}(T^n)$, $(T+F)^nx=(T^n+F_1)x=F_1x\in \mathcal{R}(F_1)$. Then $(T+F)^n|_{\mathcal{N}(T^n)}$ is a finite-rank operator. From Lemma \ref{C8}, there exists a finite-dimensional subspace $M_1$ such that $\mathcal{N}(T^n)=\mathcal{N}[(T+F)^n|_{\mathcal{N}(T^n)}]\oplus M_1$. If $\mathcal{N}(T^n)$ is topologically complemented in $X$, then [\cite{Muller}, p. 398, Theorem A.1.25] implies that $$\mathcal{N}(T+F)^n\cap \mathcal{N}(T^n)=\mathcal{N}[(T+F)^n|_{\mathcal{N}(T^n)}]$$
 is topologically complemented in $X$. Let $x$ be an arbitrary element in $\mathcal{N}(T+F)^n$, then $(T^n+F_1)x=(T+F)^nx=0$ and so $T^nx=-F_1x\in \mathcal{R}(F_1)$. Thus $T^n|_{\mathcal{N}(T+F)^n}$ is a finite-rank operator. From Lemma \ref{C8}, there exists a finite-dimensional subspace $M_2$ such that $\mathcal{N}(T+F)^n=\mathcal{N}[T^n|_{\mathcal{N}(T+F)^n}]\oplus M_2$. Since
$$\mathcal{N}[T^n|_{\mathcal{N}(T+F)^n}]=\mathcal{N}(T+F)^n\cap \mathcal{N}(T^n)$$
is topologically complemented in $X$, we have that $\mathcal{N}(T+F)^n$ is topologically complemented in $X$ by [\cite{Muller}, p. 398, Theorem A.1.25].

(ii) Assume that $T$ is right B-Fredholm, then $T$ is lower semi B-Fredholm. By [\cite{Berkani}, Proposition 2.7], $T+F$ is also lower semi B-Fredholm. Let $n\geq$ max$\{$dis$(T)$, dis$(T+F)$\}. Then $(T+F)|_{\mathcal{R}(T+F)^n}$ is lower semi Fredholm by [\cite{Berkani}, Proposition 2.1]. From Theorem \ref{C6}, it is sufficient to prove that $\mathcal{R}(T+F)^{n}$ and $\mathcal{N}[(T+F)|_{\mathcal{R}(T+F)^n}]$ are topologically complemented in $X$ and $\mathcal{R}(T+F)^n$ respectively. By Remark \ref{R8}, we have $\mathcal{R}(T+F)^{n}\mathop{=}\limits^{e}\mathcal{R}(T^{n})$. If $\mathcal{R}(T^{n})$ is topologically complemented in $X$, then $\mathcal{R}(T+F)^{n}$ is also topologically complemented in $X$ by Lemma \ref{L3.18}. Since
$$\mathcal{N}[(T+F)|_{\mathcal{R}(T+F)^n}]=\mathcal{N}(T+F)\cap\mathcal{R}(T+F)^n\mathop{=}\limits^e \mathcal{N}(T)\cap\mathcal{R}(T^n)=\mathcal{N}[T|_{\mathcal{R}(T^n)}]$$
and $\mathcal{R}(T+F)^n\mathop{=}\limits^e\mathcal{R}(T^n)$, $\mathcal{N}[(T+F)|_{\mathcal{R}(T+F)^n}]$ is topologically complemented in $\mathcal{R}(T+F)^n$ by Lemma \ref{L3.18}.

Now we prove the equality of index. If $T$ is left B-Fredholm, then it is upper semi B-Fredholm and so it is an operator with topological uniform
descent. By [\cite{Berkani}, Corollary 3.2], there exists $n_0$ such that for $n\geq n_0$ both $T-\frac{1}{n}I$ and $T+F-\frac{1}{n}I$ are upper semi Fredholm with ind$(T-\frac{1}{n}I)=$ind$(T)$ and ind$(T+F-\frac{1}{n}I)=$ind$(T+F)$. Thus ind$(T)=$ind$(T+F)$ because the index of semi Fredholm operator is stable under finite rank perturbations. The proof of right B-Fredholm case is similar.
\end{proof}

Let $X$ be a Banach space. Unlike Aiena's and Berkani's designations in [\cite{Aiena3}, \cite{Berkani3}], we prefer to call the operator $T\in \mathcal{B}(X)$ \textit{upper Drazin invertible} if $p:=$asc$(T)<\infty$ and $\mathcal{R}(T^{p+1})$ is closed, while $T$ is called \textit{lower Drazin invertible} if $q:=$dsc$(T)<\infty$ and $\mathcal{R}(T^q)$ is closed. In the next result, we establish the perturbational results of left and right B-Fredholm operators. Since the closeness and topological complementary is equivalent in Hilbert spaces, the left (resp. right) B-Fredholm operators is exactly upper (resp. lower) semi B-Fredholm operators in Hilbert spaces. Hence
our next theorem is an extension of the elegant results of Aiena, Berkani, Koliha and Sanabria ([\cite{Aiena3}, Theorem 4.1] and [\cite{Berkani3}, Theorem 4.1]) in Banach spaces. We define
$$LBF^{-}(X):=\{T\in \mathcal{B}(X): T \makebox{~is~left~B-Fredholm~with~ind}(T)\leq 0\}$$
and
$$LBF^{+}(X):=\{T\in \mathcal{B}(X): T \makebox{~is~right~B-Fredholm~with~ind}(T)\geq 0\}.$$
These two classes of operators generate the spectra
$$\sigma_{LBF^{-}}(T):=\{\lambda\in \mathbb{C}: \lambda I-T\not\in LBF^{-}(X)\}$$
and
$$\sigma_{RBF^{-}}(T):=\{\lambda\in \mathbb{C}: \lambda I-T\not\in RBF^{-}(X)\},$$
respectively. These two spectra can be described as follows:

\begin{theorem} \label{3.11} Let $X$ be a Banach space and $T\in \mathcal{B}(X)$, then
$$\sigma_{LBF^{-}}(T)=\mathop{\bigcap}\limits_{F\in \mathcal{F}(X)}\sigma_{LD}(T+F),$$
$$\sigma_{RBF^{-}}(T)=\mathop{\bigcap}\limits_{F\in \mathcal{F}(X)}\sigma_{RD}(T+F)$$
and
$$\sigma_{BW}(T)=\mathop{\bigcap}\limits_{F\in \mathcal{F}(X)}\sigma_{D}(T+F).$$
\end{theorem}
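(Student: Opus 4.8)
The three identities share a single mechanism, so the plan is to reduce each of them to one pointwise equivalence and then prove that equivalence using the intrinsic decompositions of Theorems \ref{C1}, \ref{8} and \ref{9} together with the finite-rank stability of Theorem \ref{C9}. Fix $\lambda$ and write $S:=\lambda I-T$. Since $(\lambda I-T)-F$ runs over all of $S+\mathcal{F}(X)$ as $F$ runs over $\mathcal{F}(X)$, the point $\lambda$ lies in $\bigcap_{F}\sigma_{LD}(T+F)$ exactly when \emph{no} finite-rank perturbation of $S$ is left Drazin invertible. Thus the first identity is equivalent to the assertion, for arbitrary $S\in\mathcal{B}(X)$, that $S\in LBF^{-}(X)$ if and only if $S+F$ is left Drazin invertible for some $F\in\mathcal{F}(X)$. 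I would prove this, and obtain the right-hand and two-sided identities by the dual and combined versions of the same statement.

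For the ``if'' direction, suppose $S+F$ is left Drazin invertible. By Theorem \ref{8} it splits along a reducing pair as $S+F=A\oplus B$ with $A$ left invertible and $B$ nilpotent. A left invertible operator is left essentially invertible with index $\le 0$ (injective, with closed complemented range), so Theorem \ref{C1}(i) makes $S+F$ left B-Fredholm with $\mathrm{ind}(S+F)\le 0$; as $S=(S+F)-F$ is a finite-rank perturbation, Theorem \ref{C9}(i) returns that $S$ is left B-Fredholm with the same index $\le 0$, i.e. $S\in LBF^{-}(X)$.

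The ``only if'' direction is where the real work lies. Assume $S\in LBF^{-}(X)$. By Theorem \ref{C1}(i) there are closed subspaces $M,N$ reducing $S$ with $S|_M$ left essentially invertible of index $\mathrm{ind}(S)\le 0$ and $S|_N$ nilpotent. The crux is to build a finite-rank $F_0$ on $M$ with $S|_M+F_0$ \emph{left invertible}. Here $k:=\dim\mathcal{N}(S|_M)<\infty$, while $\mathcal{R}(S|_M)$ is closed and complemented with codimension $m\ge k$ (the inequality $k\le m$ being precisely $\mathrm{ind}(S|_M)\le 0$, and $m$ possibly infinite). Writing $M=\mathcal{N}(S|_M)\oplus M_1$ and $M=\mathcal{R}(S|_M)\oplus C$, I would send a basis of $\mathcal{N}(S|_M)$ to $k$ independent vectors of $C$ and let $F_0$ vanish on $M_1$; a direct-sum check then shows $S|_M+F_0$ is injective with closed complemented range, hence left invertible. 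Taking $F:=F_0\oplus 0$ keeps the block structure, so $S+F=(S|_M+F_0)\oplus S|_N$ is left invertible $\oplus$ nilpotent and therefore left Drazin invertible by Theorem \ref{8}. This establishes the first identity.

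The second identity is the formal dual: replace Theorems \ref{C1}(i), \ref{C9}(i), \ref{8} by Theorems \ref{C1}(ii), \ref{C9}(ii), \ref{9}, and in the surgery make the right essentially invertible block of index $\ge 0$ right invertible by mapping a finite-dimensional subspace of its (possibly infinite-dimensional) kernel onto a complement $C$ of its range, which has finite codimension $m\le\dim\mathcal{N}(S|_M)$ exactly because $\mathrm{ind}\ge 0$. For the two-sided identity one runs the scheme on both sides at once: a B-Weyl operator $S$ (index $0$) decomposes by Theorem \ref{C1} as $S|_M\oplus S|_N$ with $S|_M$ Fredholm of index $0$ and $S|_N$ nilpotent, and now matching \emph{equal} kernel and cokernel dimensions yields $F_0$ with $S|_M+F_0$ invertible, so $S+F$ is invertible $\oplus$ nilpotent, hence Drazin invertible (being both left and right Drazin invertible, Theorem \ref{2} applies); conversely Drazin invertibility of $S+F$ forces $S$ to be B-Weyl through Theorem \ref{C9} on both sides with the index preserved. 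The only recurring subtlety is that the surgery must not disturb the nilpotent block, which is automatic because $F=F_0\oplus 0$ is block diagonal along the reducing decomposition $M\oplus N$.
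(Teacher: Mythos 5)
Your proposal is correct and follows essentially the same route as the paper: decompose $\lambda I-T$ via Theorem \ref{C1} into a left (resp.\ right) essentially invertible part of the appropriate index plus a nilpotent part, correct the essentially invertible block by a finite-rank operator to make it left (resp.\ right) invertible, conclude one-sided Drazin invertibility of the perturbed operator via Theorem \ref{8} (resp.\ \ref{9}), and get the reverse inclusion from Theorem \ref{C9}. The only differences are cosmetic: where you explicitly build the rank-$k$ correction $F_0$ sending a basis of $\mathcal{N}(S|_M)$ into a complement of $\mathcal{R}(S|_M)$, the paper simply cites M\"uller's Theorem III.19.7 for the existence of such a perturbation, and where you treat the third identity by running the surgery on both blocks at once, the paper deduces it from the first two equalities.
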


\begin{proof} To prove the first equation, let $\lambda\not \in \sigma_{LBF^{-}}(T)$. Then $\lambda I-T$ is left B-Fredholm with ind$(\lambda I-T)\leq 0$. By Theorem \ref{C1}, there are two closed subspaces $M$ and $N$ of $X$ such that $T_1:=(\lambda I-T)|_M$ is left essentially invertible with ind$(T_1)\leq 0$ and $T_2:=(\lambda I-T)|_N$ is nilpotent with some order $d$. By [\cite{Muller}, p. 173, Theorem III.19.7], there exists a finite rank operator $F_1\in\mathcal{F}(X)$ such that $T_1+F_1$ is left invertible. Put $F:=F_1\oplus 0$. Then $F$ is also a finite rank operator and $\lambda I-T+F=(T_1+F_1)\oplus T_2$. Thus Theorem \ref{8} implies that $\lambda I-T+F$ is left Drazin invertible and so
$$\lambda\not\in \bigcap_{F\in \mathcal{F}(X)}\sigma_{LD}(T+F).$$

Conversely, assume that there exists a finite rank operator $F\in\mathcal{F}(X)$ such that $\lambda I-T+F$ is left Drazin invertible. From Theorem \ref{8} and Theorem \ref{C1}, it follows that $\lambda I-T+F$ is left B-Fredholm with ind$(\lambda I-T+F)\leq 0$. Then $\lambda I-T$ is left B-Fredholm with ind$(\lambda I-T)\leq 0$ by Theorem \ref{C9}. This implies that $\lambda\not\in \sigma_{LBF^{-}}(T)$.

The proof of second equation is dual to that of first one. The third equation is directly from the first and second equation.
\end{proof}

\begin{corollary} \label{cor1} Let $T\in \mathcal{B}(X)$. Then the following statements hold.

\emph{(i)} $T$ is left B-Fredholm with ind$(T)\leq 0$ if and only if $T=S+F$, where $S$ is left Drazin invertible and $F$ is a finite dimensional operator.

\emph{(ii)} $T$ is right B-Fredholm with ind$(T)\geq 0$ if and only if $T=S+F$, where $S$ is right Drazin invertible and $F$ is a finite dimensional operator.
\end{corollary}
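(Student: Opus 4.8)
The plan is to read this corollary off the machinery already assembled for Theorem \ref{3.11}; indeed it is essentially the $\lambda=0$ instance of that theorem, made explicit as an additive decomposition rather than as an intersection of spectra. I would prove (i) and obtain (ii) by the dual argument.

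For the forward implication of (i), I would start from the hypothesis that $T$ is left B-Fredholm with $\mathrm{ind}(T)\leq 0$ and invoke Theorem \ref{C1}(i) to split $X=M\oplus N$ with $M,N$ reducing $T$, $T|_M$ left essentially invertible with $\mathrm{ind}(T|_M)=\mathrm{ind}(T)\leq 0$, and $T|_N$ nilpotent. The key step is to upgrade $T|_M$ to a left invertible operator by a finite-rank perturbation: since $T|_M$ is upper semi Fredholm with nonpositive index, [\cite{Muller}, p. 173, Theorem III.19.7] supplies $F_1\in\mathcal{F}(M)$ with $T|_M+F_1$ left invertible. Setting $F:=F_1\oplus 0\in\mathcal{F}(X)$ gives $T+F=(T|_M+F_1)\oplus T|_N$, which is the direct sum of a left invertible operator and a nilpotent one, hence left Drazin invertible by Theorem \ref{8} (implication (v)$\Rightarrow$(i)). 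Writing $S:=T+F$ and $F':=-F$ then yields $T=S+F'$ with $S$ left Drazin invertible and $F'$ finite rank, as required.

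For the converse I would run the argument backwards. If $T=S+F$ with $S$ left Drazin invertible and $F$ finite rank, then Theorem \ref{8} decomposes $S=S|_M\oplus S|_N$ with $S|_M$ left invertible and $S|_N$ nilpotent; a left invertible operator is left essentially invertible with nonpositive index, so Theorem \ref{C1}(i) shows $S$ is left B-Fredholm with $\mathrm{ind}(S)=\mathrm{ind}(S|_M)\leq 0$. Perturbing back by the finite-rank operator $F$ and appealing to Theorem \ref{C9}(i) then gives that $T=S+F$ is left B-Fredholm with $\mathrm{ind}(T)=\mathrm{ind}(S)\leq 0$. Part (ii) follows verbatim with ``left'' replaced by ``right'', $\mathrm{ind}\leq 0$ by $\mathrm{ind}\geq 0$, and Theorems \ref{C1}(ii), \ref{9}, \ref{C9}(ii) in place of their left-handed analogues.

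The only genuinely delicate points, rather than true obstacles, are bookkeeping ones: transporting the finite-rank perturbation $F_1$ defined on $M$ to all of $X$ as $F_1\oplus 0$, keeping track of the sign so that the decomposition really exhibits $T$ (and not $T+F$) as ``left Drazin invertible plus finite rank'', and confirming that the index is preserved at each perturbation step. This index bookkeeping is exactly what Theorems \ref{C1} and \ref{C9} were designed to control, so no new estimate is needed; the substance of the result is entirely inherited from those theorems together with Müller's finite-rank reduction of a semi Fredholm operator to a one-sided invertible one.
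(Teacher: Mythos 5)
Your argument is correct and is essentially the paper's own route: Corollary \ref{cor1} is stated as an immediate consequence of Theorem \ref{3.11} (the case $\lambda=0$ of the spectral identity), and the steps you spell out --- the reducing decomposition from Theorem \ref{C1}, M\"uller's finite-rank correction of the left essentially invertible part to a left invertible one, Theorem \ref{8} (v)$\Rightarrow$(i) for the forward direction, and Theorems \ref{8}, \ref{C1}, \ref{C9} for the converse --- are exactly those in the paper's proof of Theorem \ref{3.11}. Nothing further is needed.
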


\begin{corollary} Let $H$ be a Hilbert space and $T\in \mathcal{B}(H)$. Then the following statements hold:

\emph{(i)} $T$ is upper semi B-Fredholm with ind$(T)\leq 0$ if and only if $T=S+F$, where $S$ is upper Drazin invertible and $F$ is a finite dimensional operator.

\emph{(ii)} $T$ is lower semi B-Fredholm with ind$(T)\geq 0$ if and only if $T=S+F$, where $S$ is lower Drazin invertible and $F$ is a finite dimensional operator.
\end{corollary}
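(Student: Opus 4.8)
The plan is to deduce this corollary directly from Corollary \ref{cor1} by identifying, in the Hilbert space setting, each ``one-sided B-Fredholm'' notion with its ``semi B-Fredholm'' counterpart and each ``one-sided Drazin invertible'' notion with its ``upper/lower Drazin invertible'' counterpart. The essential point throughout is that in a Hilbert space every closed subspace is topologically complemented by its orthogonal complement, so all the ``topologically complemented'' hypotheses occurring in Theorems \ref{8}, \ref{9}, \ref{C1} and \ref{C6} collapse to mere closedness.

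First I would record the Fredholm-side identifications, which are already indicated in the text preceding the statement: since closedness and topological complementarity coincide in $H$, an operator $T\in\mathcal{B}(H)$ is left B-Fredholm if and only if it is upper semi B-Fredholm, and right B-Fredholm if and only if it is lower semi B-Fredholm. Concretely, in Theorem \ref{C6}(i) the complementarity of $\mathcal{N}(T^d)$ is automatic and ``left essentially invertible'' reduces to ``upper semi Fredholm'', which is exactly the upper semi B-Fredholm condition; dually for the lower case. Because the index is defined by the same formula $\mathrm{ind}(T)=\mathrm{ind}(T|_{\mathcal{R}(T^n)})$ in both frameworks, the sign conditions $\mathrm{ind}(T)\leq 0$ and $\mathrm{ind}(T)\geq 0$ transfer without change.

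Next I would establish the Drazin-side identifications. For the upper case, Theorem \ref{8}(iii) characterizes left Drazin invertibility by $\mathrm{asc}(T)=j<\infty$ together with topological complementarity of $\mathcal{N}(T^j)$ and $\mathcal{R}(T^{j+1})$; in $H$ the subspace $\mathcal{N}(T^j)$ is automatically closed and complemented, so this reduces exactly to $\mathrm{asc}(T)<\infty$ and $\mathcal{R}(T^{j+1})$ closed, i.e. to upper Drazin invertibility. For the lower case I would use Theorem \ref{9}(vi): right Drazin invertibility means $\mathcal{R}(T^j)$ is complemented and $T|_{\mathcal{R}(T^j)}$ is right invertible. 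In $H$, $\mathcal{R}(T^j)$ complemented means closed, and a bounded surjection on a Hilbert space is always right invertible (its kernel splits off by the orthogonal complement); conversely, surjectivity of $T|_{\mathcal{R}(T^j)}$ forces $\mathcal{R}(T^{j+1})=\mathcal{R}(T^j)$, hence $\mathrm{dsc}(T)\leq j<\infty$ with $\mathcal{R}(T^{\mathrm{dsc}(T)})=\mathcal{R}(T^j)$ closed. Thus right Drazin invertibility coincides with lower Drazin invertibility.

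With these four equivalences in hand the corollary is immediate: statement (i) is precisely Corollary \ref{cor1}(i) with ``left B-Fredholm'' replaced by ``upper semi B-Fredholm'' and ``left Drazin invertible'' replaced by ``upper Drazin invertible'', and similarly for (ii). The only point requiring genuine care, and hence the main obstacle, is the verification of the lower Drazin identification in the descent-finite direction, where one must check that the closed-range hypothesis indeed yields right invertibility of $T|_{\mathcal{R}(T^{\mathrm{dsc}(T)})}$; this rests on the fact that $T$ maps $\mathcal{R}(T^q)$ onto itself once $q=\mathrm{dsc}(T)$, together with the Hilbert-space splitting of its kernel.
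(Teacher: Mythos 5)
Your proposal is correct and follows essentially the same route as the paper: the paper's proof likewise observes that in a Hilbert space the left (resp.\ right) Drazin invertible and left (resp.\ right) B-Fredholm operators coincide with the upper (resp.\ lower) Drazin invertible and upper (resp.\ lower) semi B-Fredholm operators, by Theorems \ref{8} and \ref{9} and the equivalence of closedness with topological complementarity, and then invokes Corollary \ref{cor1}. Your additional verification of the descent-finite direction via Theorem \ref{9}(vi) is a useful elaboration of a step the paper leaves implicit, but it is not a different method.
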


\begin{proof} From Theorems \ref{8} and \ref{9}, it follows that $T\in \mathcal{B}(H)$ is upper (resp. lower) Drazin invertible if and only if $T$ is left (resp. right) Drazin invertible, while $T\in \mathcal{B}(H)$ is upper (resp. lower) semi B-Fredholm if and only if $T$ is left (resp. right) B-Fredholm. Hence the conclusions hold by Corollary \ref{cor1}.
\end{proof}

\textbf{Acknowledgement.} The author would like to thank the Professors Minf and Jiang for providing their papers.

%% The Appendices part is started with the command \appendix;
%% appendix sections are then done as normal sections
%% \appendix

%% \section{}
%% \label{}

%% If you have bibdatabase file and want bibtex to generate the
%% bibitems, please use
%%
%%  \bibliographystyle{elsarticle-num}
%%  \bibliography{<your bibdatabase>}

\begin{thebibliography}{00}

\bibitem{Aiena1} P. Aiena, Fredholm and Local Spectral Theory, with Applications to Multipliers, Kluwer Academic Publishers, Dordrecht 2004.

\bibitem{Aiena3} P. Aiena, J.E. Sanabria, On left and right poles of the resolvent, Acta Sci. Math. (Szeged) 74(2008), 669-687.

\bibitem{Berkani} M. Berkani, On semi B-Fredholm operators, Glasgow Math. J 43(2001), 457-465.

\bibitem{Berkani2} M. Berkani, On a class of quasi-Fredholm operators, Integr. equ. oper. theory 34(1999), 244-249.

\bibitem{Berkani1} M. Berkani, M. Sarih, On Atkinson-type theorem for B-Fredholm operators, Studia math. 148(2001), 251-257.

\bibitem{Berkani3} M. Berkani, J.J. Koliha, Weyl type Theorems for bounded linear operators, Acta Sci. Math. (Szeged), 69(2003), 359-376.

\bibitem{Berkani4} M. Berkani, On the p-invertibility in rings and Banach algebras, Aequat. Math., 96(2022), 1125-1140.

\bibitem{Campbell}S.L. Campbell and C.D. Meyer Jr, Generalized Inverse of Linear Transformations, Pitman, London, 1979.

\bibitem{Conway}J.B. Conway, A course in Funcional Analysis, second edition, Springer-Verlag, New York, 1990.

\bibitem{Cvetkovic} M.D. Cvetkov\'{i}c, On upper and lower generalized Drazin invertible operators, Funct. Anal. Approx. Comput. 7(2015), 67-74.

\bibitem{Drazin} M.P. Drazin, Pseudo-inverses in associative rings and semigroups, Amer. Math. Monthly 65(1958), 506-514.

\bibitem{Drazin1} M.P. Drazin, Left and right generalized inverses, Linear Algebra Appl. 510(2016), 64-78.

\bibitem{Ferreyra} D.E. Ferreyra, M. Lattanzi, F.E. Levis, N. Thome, Left and right generalized Drazin invertible operators on Banach spaces and applications, Oper. Matrices 13(2019), 569-583.

\bibitem{Ghorbel} A. Ghorbel, On generalized Saphar operators on Banach spaces, Banach J. Math. Anal., 6(2023), Article number 17.

\bibitem{Minf} A. Ghorbel, M. Mnif, On a generalization of left and right invertible operators on Banach spaces, Oper. Matrices 16(2022), 401-413.

\bibitem{Harte} R. Harte, Invertibility and singularity for bounded linear operators, New York: Marcel Dekker, 1988.

\bibitem{Kaashoek} M. Kaashoek, Ascent, descent, Nullity and Defect, a Note on a Paper by A.E. Taylor, Math. Annalen 172(1967): 105-115.

\bibitem{Kaashoek1} M. Kaashoek, D.C. Lay, Ascent, descent, and commuting perturbations, Trans. Amer. Math. Soc. 169(1972): 35-47.

\bibitem{Koliha1} J.J. Koliha, A generalized Drazin inverse, Glasg. Math. J. 38(1996), 367-381.

\bibitem{Mbekhta1} M. Mbekhta, V. M$\ddot{\makebox{u}}$ller, On the axiomatic theory of spectrum II, Studia math. 119(1996), 109-128.

\bibitem{Muller}V. M$\ddot{\makebox{u}}$ller, Spectral Theory of Linear Operators and Spectral Systems in Banach Algebras, second edition, Birkh$\ddot{\makebox{a}}$user, Basel, Boston, Berlin, 2007.

\bibitem{Muller1} V. M$\ddot{\makebox{u}}$ller, On the Kato decomposition of quasi-Fredholm operators and B-Fredholm operators, Proc. Workshop Geometry in Functional Analysis, Erwin Schrodinger Institute, Wien (2000).

\bibitem{Ren} Y.X. Ren, L.N. Jiang, Left and right Drazin inverses in rings and operator algebras, J. Algebra Appl., (2022), https://doi.org/10.1142/S0219498824500646.

\bibitem{Roch} S. Roch, B. Silbermann, Continuity of generalized inverses in Banach algebras, Studia Math., 136(1999), 197-227.

\bibitem{Xu} Q.X. Xu; C.N. Song, Y.M. Wei, The stable perturbation of the Drazin inverse of the square matrices, SIAM J. Matrix Anal. Appl. 31 (2009), 1507-1520.

\bibitem{Yan} K. Yan, Q.P. Zeng, Y.C. Zhu, On Drazin Spectral Equation for the Operator Products, Complex Anal. Oper. Theory, 14(2020), no. 12.

\end{thebibliography}

%% else use the following coding to input the bibitems directly in the
%% TeX file.

\addcontentsline{toc}{section}{References}

\end{document}